\def\cleardoublepage{\clearpage\if@twoside \ifodd\c@page\else%
         \hbox{}%
     \thispagestyle{empty}
     \newpage%
     \if@twocolumn\hbox{}\newpage\fi\fi\fi}
\let\cleardoublepage\clearpage
\newtheorem{thm}{Theorem}[section]
\newtheorem{cor}[thm]{Corollary}
\newtheorem{lem}[thm]{Lemma}
\newtheorem{pro}[thm]{Proposition}
\newtheorem{den}[thm]{Definition}
\newtheorem{oss}[thm]{Remark}
\numberwithin{equation}{section}
\begin{document}

\title[]{The porous medium equation with large initial data \\ on negatively curved Riemannian manifolds}

\author {Gabriele Grillo, Matteo Muratori, Fabio Punzo}

\address {Gabriele Grillo: Dipartimento di Matematica, Politecnico di Milano, Piaz\-za Leonardo da Vinci 32, 20133 Milano, Italy}
\email{gabriele.grillo@polimi.it}

\address{Matteo Muratori: Dipartimento di Matematica ``F. Casorati'', Universit\`a degli Studi di Pavia, via A. Ferrata 5, 27100 Pavia, Italy}
\email{matteo.muratori@unipv.it}

\address{Fabio Punzo: Dipartimento di Matematica e Informatica, Universit\`a della Calabria, via Pietro Bucci 31B, 87036 Rende (CS), Italy}
\email{fabio.punzo@unical.it}

\keywords{porous medium equation; Cartan-Hadamard manifolds;  sub-- and supersolutions; a priori estimates; weighted Lebesgue spaces.}

%
%
%
%


\begin{abstract}
We show existence and uniqueness of very weak solutions of the Cauchy problem for the porous medium equation on Cartan-Hadamard
manifolds satisfying suitable lower bounds on Ricci curvature, with initial data that can grow at infinity at a prescribed rate, that depends crucially on the curvature bounds. The curvature conditions we require are sharp for uniqueness in the sense that if they are not satisfied then, in general, there can be infinitely many solutions of the Cauchy problem even for bounded data.
Furthermore, under matching upper bounds on sectional curvatures, we give a precise estimate for the maximal existence time, and we show that in general solutions do not exist if the initial data grow at infinity too fast. This proves in particular that the growth rate of the data we consider is optimal for existence. Pointwise blow-up is also shown for a particular class of manifolds and of initial data.
\end{abstract}
\maketitle

\tableofcontents

\section{Introduction}
We discuss existence and uniqueness of very weak solutions
of Cauchy problems for the \emph{porous medium equation} on
Riemannian manifolds, namely of the problem:
\begin{equation}\label{e64}
\begin{cases}
u_t \,=\, \Delta (u^m) & \textrm{in}\;\; M\times (0,T)\,, \\
u \,=\, u_0 & \textrm{on}\;\; M\times \{0\}\,,
\end{cases}
\end{equation}
where $M$ is an $N$-dimensional complete, simply connected Riemannian manifold with nonpositive sectional curvatures (namely a \emph{Cartan-Hadamard manifold}) and $\Delta$ is the Laplace-Beltrami
operator on $M$, $m>1$. Note that, when dealing with changing-sign solutions, as usual we set $ u^m = |u|^{m-1}u $. In particular, we are interested in considering initial data that can grow at infinity, and the interval of existence $[0, T)$ may then depend on the initial condition $u_0$.

Recently, quasilinear degenerate parabolic equations on Riemannian manifolds have attracted much attention (see e.g.~\cite{GMhyp}, \cite{GM2}, \cite{GMPrm}, \cite{MMP}, \cite{Pu1}, \cite{Pu2}, \cite{VazH}, \cite{Zhang}). In particular, in \cite{GMPrm} the well-posedness of problem \eqref{e64} with $u_0=\mu$, a finite Radon measure, has been studied; moreover, in \cite{VazH} and in \cite{GMV} smoothing estimates, support properties and the asymptotic behaviour of solutions have been addressed. The aim of our paper is to investigate existence and uniqueness of solutions to problem \eqref{e64}, considering a large class of initial conditions $u_0$, possibly unbounded at infinity. We always assume that the sectional
curvatures are nonpositive, and that the Ricci curvature is bounded from below by $-C_0(1+d(x,o)^{\gamma})$ for some constants $C_0>0,  \gamma\in(-\infty,2]$  and a fixed point $ o \in M $, where $d(\cdot,\cdot)$ denotes Riemannian distance. Some comments on the case $\gamma>2$ will be made in Remark \ref{parametri} below.

\smallskip

In the case $M=\mathbb R^N$ problem \eqref{e64} has been studied in \cite{BCP}, under optimal conditions on initial data $u_0$. In fact, in \cite{BCP} it is shown that if
\begin{equation}\label{q52}
\sup_{R\geq 1} \frac 1{R^{N+\frac 2{m-1}}} \int_{B_R} |u_0(x)|\, dx\, <\, \infty\,,
\end{equation}
then there exists a distributional solution $u$ of the differential equation in problem \eqref{e64}; in addition, for
\begin{equation*}
\alpha > \frac N 2+ \frac 1 {m-1}\,,
\end{equation*}
 $u\in C([0, T); L^1_{\alpha}(\mathbb R^N))$ and $u(0)=u_0$, where $$L^1_\alpha(\mathbb R^N):=\big\{f\in L^1_{\textrm{loc}}(\mathbb R^N)\,:\, \int_{\mathbb R^N} |f(x)| \, (1+|x|)^{-\alpha}\, dx \, <\, \infty\, \big\}\,.$$
Moreover, $u$ is the unique solution in the following sense: if $v$ is a distributional solution of problem \eqref{e64} such that, for every $\epsilon>0,\, v (1+ |x|^2)^{-\frac 1{m-1}}\in L^\infty(\mathbb R^N\times (\epsilon, T))$, then $v=u$. In addition, in \cite{BCP}, using some results from \cite{AC}, it is observed that the class of initial data that they consider is optimal for existence of solutions since initial traces must necessarily comply with \eqref{q52}.

We mention that in the proof of existence one uses the fact that if $u_0\in L^1(\mathbb R^N)\cap L^\infty(\mathbb R^N)$, then the unique weak solution $u$ of problem
\[
\begin{cases}
u_t \,=\, \Delta (u^m) & \textrm{in}\;\; \mathbb R^N\times (0,\infty)\,, \\
u \,=\, u_0 & \textrm{on}\;\; \mathbb R^N\times \{0\}\,,
\end{cases}
\]
satisfies the so-called {\it Aronson-B\'enilan} estimate (see \cite{AB})
\begin{equation}\label{q54}
\Delta(u^{m-1}) \geq - \frac {m-1}{m}\frac{N}{(m-1)N +2} \, \frac 1 t\quad \textrm{in}\;\;\, \mathfrak D'(\mathbb R^N\times (0, \infty))\,.
\end{equation}
Then, using \eqref{q54}, a certain local smoothing estimate is deduced, which is central in the proof of existence.  We are not aware of a direct analogue of such inequality in the Riemannian context till the paper \cite{LNVV}, in which some \it local \rm Aronson-B\'enilan formulas are proved by a clever use of Li-Yau type techniques (see also \cite{HHL} for some improvements). If Ricci curvature is nonnegative, a full analogue of the \it global \rm Aronson-B\'enilan inequality holds, whereas a weaker inequality holds if Ricci curvature is only bounded below. In any case, no global estimate of that kind seems available when curvature is unbounded below and when solutions are unbounded, that is the main case we shall deal with here. It is, moreover, not even clear which is the natural analogue of \eqref{q52}, since volume growth of geodesic balls should clearly appear in a condition of that type to allow for unbounded data (recall that the volume of balls can grow exponentially, or even faster, under our assumptions).

%

We are therefore forced to use a different method of proof as concerns existence. This leads us to assume \it pointwise \rm requirements on initial data, which is of course a stronger hypothesis than \eqref{q52}, but on the other hand our method is quite simple, being based on barrier arguments only, and nevertheless it singles out qualitatively the correct possible explosion rate at infinity of initial data admitting a local in time solution, as we explicitly show in Theorems \ref{opt1}, \ref{opt-blow} and in Corollary \ref{opt2}.

On the other hand, the proof of uniqueness is based on the ``duality method'' (see e.g.~\cite{ACP}, \cite{Pierre}, \cite{GMPmu}, \cite{Vaz07}). However, in order to implement such a method in \cite{BCP}  new difficulties have to be dealt with. In particular, in \cite{BCP} a crucial role is played by a supersolution to an appropriate backward parabolic problem with an unbounded coefficient; such a supersolution has the form
\begin{equation*}
\psi(x,t)=\lambda\frac{e^{\alpha(T-t)}}{(1+|x|^2)^{\beta}}\quad (x,t)\in \mathbb R^N\times [0, T]\,,
\end{equation*}
for a suitable choice of the parameters $\lambda>0, \alpha>0, \beta>0\,.$ While again  the idea of finding a supersolution will be crucial here, a separable variable solution seems not suitable to the goal. In fact, the one we shall use is taken according to the following strategy. An idea of V\'azquez, used in \cite{VazH} to deal with the PME posed on the hyperbolic space, is to rephrase the evolution for \it radial \rm (i.e. depending on the geodesic distance from some point) solutions in terms of a \it weighted, Euclidean \rm equation, in which the weight has the critical decay $|x|^{-2}$ at infinity. This strategy has been used in \cite{GMV} to deal with the PME on the class of negatively curved manifolds discussed here. It turns out that the kind of supersolution used here can be guessed from the known asymptotics of the corresponding weighted, Euclidean heat equation with critical weight as considered in \cite{IS1} (see \cite{IS2} for a generalization to the corresponding weighted PME). Such a supersolution is strictly related to the bound from below on the Ricci curvature, via the growth of the measure of the sphere as the radius increases.

\smallskip
\smallskip

Let us now go into some detail on our results. Assume that
\[
\textrm{Ric}_o(x)\geq-C_0(1+d(x,o)^{\gamma})
\]
for some constants $C_0>0, \gamma\in(-\infty,2]$, where $\textrm{Ric}_o$ denotes Ricci curvature in the radial direction associated to a point $o\in M$. Define
\begin{equation}\label{m3}
\sigma:=\frac{2-\gamma}{2}\wedge2\,;
\end{equation}
\[\rho(x):= d(x, o)\quad \textrm{for any}\;\; x\in M\,.\]
We show in Theorem \ref{thmexi} that if, for some $C>0$,
\begin{equation}\label{e21z}
|u_0(x)| \leq C \left(1+ \rho(x)\right)^{\frac{\sigma}{m-1}}\quad \textrm{for almost every } x\in M\,,
\end{equation}
then there exists a solution $u$ of problem \eqref{e64}, for some $T>0$, which satisfies an analogous bound, namely
\begin{equation}\label{e20z}
|u(x,t)| \leq C \left(1+ \rho(x)\right)^{\frac{\sigma}{m-1}}\quad \textrm{for almost every } (x,t)\in M\times (0, T)\,.
\end{equation}
Moreover, we show in Theorem \ref{thmuniq} that $u$ is the unique solution in the class of solutions satisfying condition \eqref{e20z} for some $C>0$.
Observe that both in the existence and in the uniqueness result the assumption on the Ricci curvature crucially influences the space of functions to which both the initial condition $u_0$ and the solution $u$ belong, through the parameter $\sigma$ defined in \eqref{m3}. Furthermore,  under additional upper bounds on sectional curvatures, we show in Theorem \ref{opt1} that if data have the critical growth $\rho(x)^{\frac{\sigma}{m-1}}$, the corresponding maximal existence time of solutions is at most (a multiple of) the time $T$ found in the existence theorem. In particular, if data grow at a faster rate at infinity, no positive distributional solutions exist, see Corollary \ref{opt2}.  Finally, in Theorem \ref{opt-blow} we show that on model manifolds complying with the required curvature bounds, pointwise blow-up occurs for a particular class of data which have critical growth.
This entails the sharpness of our results, in the sense that the growth condition we impose on data cannot in general be improved under the given curvature assumptions.

\smallskip

We stress that our assumption concerning the bound from below for the Ricci curvature (see \eqref{H}-(ii) below) is essential. It is not surprising that such bound on the Ricci curvature has a key role in the proof of uniqueness, since it implies \emph{stochastic completeness} of $M$, which is equivalent to uniqueness of bounded solutions in the linear case (i.e.~for the heat equation), such a condition being sharp for stochastic completeness, see \cite{IM}, \cite{Grig}. The problem of uniqueness and nonuniqueness of solutions has been the subject, in the linear setting, of extensive further research, and several sharp results have been obtained, see e.g.~\cite{M1, I, I2, M2}. In our setting, we observe that if the (negative) quadratic bound from below on the Ricci curvature is not satisfied, then the Cauchy problem \eqref{e64} with a \emph{bounded} initial datum admits infinitely many bounded solutions: this is illustrated in Remark \ref{rem: quad}.

\smallskip

The paper is organized as follows. In Section \ref{Stat} we introduce some functional analytic and geometric preliminaries; then we state the main results and we give the precise definition of solution to problem
\eqref{e64}. Existence of solutions, and preliminarily a key a priori estimate are shown in Section \ref{existence}. In Section \ref{uniqueness} we prove
uniqueness of solutions. Finally, in Section \ref{blup} we prove the blow-up result and the nonexistence theorem, thus showing the sharpness of our results.

\section{Preliminaries, assumptions and statements of the main results}\label{Stat}
We collect here notations concerning the geometric objects we deal with, well-known Laplacian comparison results used in the sequel, and the corresponding geometric assumptions which are supposed to hold throughout the paper. We shall also recall some definitions and preliminary results on the function spaces necessary to our discussion. Finally we shall state our results, first as concerns existence and uniqueness, and then as concerns maximal existence time, nonexistence and blow-up for suitable classes of data.

\subsection{Notations from Riemannian geometry}\label{RG}
Let $M$ be a complete noncompact Riemannian manifold. Let $\Delta$
denote the Laplace-Beltrami operator, $\nabla$ the Riemannian
gradient and $d\mu$ the Riemannian volume element on $M$.


We consider \emph{Cartan-Hadamard} manifolds, i.e.~complete, noncompact, simply connected Riemannian manifolds with nonpositive
sectional curvatures everywhere. Observe that on Cartan-Hadamard manifolds the \emph{cut locus} of any point $o$ is empty \cite{Grig,Grig3}.
Hence, for any $x\in M\setminus \{o\}$ one can define its {\it polar coordinates} with pole at $o$, namely $\rho(x) := d(x, o)$ and $\theta\in \mathbb S^{N-1}$. If we denote by $B_R$ the Riemannian ball of radius $R$ centred at $o$ and $ S_R:=\partial B_R $, there holds
\begin{equation}\label{n51}
\textrm{meas}(S_R)\,=\, \int_{\mathbb S^{N-1}}A(\rho, \theta) \, d\theta^1d \theta^2 \ldots d\theta^{N-1}\,,
\end{equation}
for a specific positive function $A$ which is related to the metric tensor, \cite[Sect. 3]{Grig}. Moreover, it is direct to see that the Laplace-Beltrami operator in polar coordinates has the form
\begin{equation}\label{e1}
\Delta \,=\, \frac{\partial^2}{\partial \rho^2} + m(\rho, \theta) \, \frac{\partial}{\partial \rho} + \Delta_{S_{\rho}} \, ,
\end{equation}
where $m(\rho, \theta):=\frac{\partial }{\partial \rho}(\log A)$ and $ \Delta_{S_{\rho}} $ is the Laplace-Beltrami operator on $ S_{\rho} $.  Thanks to \eqref{e1}, we can identify $ m(\rho,\theta) $ as the Laplacian of the distance function $ x \mapsto \rho(x) $.

Let $$\mathcal A:=\left\{f\in C^\infty((0,\infty))\cap C^1([0,\infty)): \, f'(0)=1, \, f(0)=0, \, f>0 \ \textrm{in}\;\, (0,\infty)\right\} .$$ We say that $M$ is a {\it spherically symmetric manifold} or a {\it model manifold} if the Riemannian metric is given by
\begin{equation*}\label{e2}
ds^2 \,=\, d\rho^2+\psi(\rho)^2 \, d\theta^2,
\end{equation*}
where $d\theta^2$ is the standard metric on $\mathbb S^{N-1}$ and $\psi\in \mathcal A$. In this case, we shall write $M\equiv M_\psi$; furthermore, we have $A(\rho,\theta)=\psi(\rho)^{N-1} \, \eta(\theta) $ for a suitable angular function $\eta$, so that
\begin{equation}\label{mm43}
\Delta \,=\, \frac{\partial^2}{\partial \rho^2} + (N-1) \, \frac{\psi'}{\psi} \, \frac{\partial}{\partial\rho} + \frac1{\psi^2} \, \Delta_{\mathbb S^{N-1}} \, .
\end{equation}
Note that $\psi(r)=r$ corresponds to $M=\mathbb R^N$, while $\psi(r)=\sinh r$ corresponds to $ M=\mathbb H^N $, namely the $N$-dimensional hyperbolic space.

For any $x\in M\setminus\{o\}$, we denote by $\textrm{Ric}_o(x)$ the
\emph{Ricci curvature} at $x$ in the radial direction
$\frac{\partial}{\partial\rho}$. Let $\omega$ be any pair of tangent
vectors from $T_x M$ having the form $\big(\frac{\partial}{\partial \rho}
, V \big)$, where $V$ is a unit vector orthogonal to
$\frac{\partial}{\partial\rho}$. We denote by $\textrm{K}_{\omega}(x)$ the \emph{sectional curvature} at $x\in M$ of the $2$-section determined by $\omega$.

\subsection{Laplacian comparison}\label{lapl-comp}
Let us recall some crucial Laplacian comparison results. It is by now classical (see e.g.~\cite{GW} and \cite[Section
15]{Grig}) that if
\begin{equation}\label{e3a}
\textrm{K}_{\omega}(x)\leq -\frac{\psi''(\rho)}{\psi(\rho)}\quad \textrm{for all } x \equiv (\rho,\theta)\in M\setminus\{o\}
\end{equation}
for some function $\psi\in \mathcal A$, then
\begin{equation}\label{e3}
m(\rho, \theta)\geq (N-1) \, \frac{\psi'(\rho)}{\psi(\rho)} \quad \textrm{for all } \rho>0 \, , \ \theta \in \mathbb S^{N-1}\,.
\end{equation}
On the other hand, if
\begin{equation}\label{e3c}
\textrm{Ric}_{o}(x) \geq -(N-1) \, \frac{\psi''(\rho)}{\psi(\rho)} \quad \textrm{for all } x\equiv(\rho,\theta)\in M\setminus\{o\}
\end{equation}
for some function $\psi\in \mathcal A$, then
\begin{equation}\label{e4}
m(\rho, \theta)\leq (N-1) \, \frac{\psi'(\rho)}{ \psi(\rho)}\quad \textrm{for all } \rho>0 \, , \ \theta \in \mathbb S^{N-1}\,.
\end{equation}
Furthermore, in the former case from \eqref{n51} and \eqref{e3} it follows that
\begin{equation*}\label{n50z}
\textrm{meas}(S_R)\geq C \, \psi(R)^{N-1} \quad \forall R>0 \, ,
\end{equation*}
whereas in the latter case from \eqref{n51} and \eqref{e4} it follows that
\begin{equation}\label{n50a}
\textrm{meas}(S_R)\leq C \, \psi(R)^{N-1} \quad \forall R>0 \, ,
\end{equation}
for some constant $C>0$ independent of $ R $.

In the special case of a model manifold $M_\psi$, for any $x\equiv(\rho, \theta)\in M_\psi\setminus\{o\}$ we have
\begin{equation}\label{e1ce}
\textrm{K}_{\omega}(x)=-\frac{\psi''(\rho)}{\psi(\rho)} \, , \quad \textrm{Ric}_{o}(x)=-(N-1) \, \frac{\psi''(\rho)}{\psi(\rho)} \, .
\end{equation}
Moreover, the sectional curvature w.r.t.~planes orthogonal to $\frac{\partial}{\partial \rho}$ is given by
\begin{equation*}
\frac{1-[\psi'(\rho)]^2}{\psi(\rho)^2} \, .
\end{equation*}
In particular, as $\psi\in \mathcal A$, the condition $ \psi''\geq 0 $ in $(0, \infty)$ is necessary and sufficient for $ M_\psi $ to be a Cartan-Hadamard manifold.

Finally, note that for any Cartan-Hadamard manifold we have $\textrm{K}_{\omega}(x)\leq 0$, so that \eqref{e3a} is trivially satisfied with
$\psi(\rho)=\rho$ and therefore
\begin{equation}\label{e5}
m(\rho, \theta) \geq \frac{N-1}{\rho} \quad \textrm{for any } x \equiv (\rho, \theta) \in M \setminus \{o\}\,.
\end{equation}

\subsection{Main assumptions and consequences}\label{main}
Throughout the paper we shall work under the following hypotheses:
\begin{equation} \tag{{\it H}} \label{H}
\begin{cases}
\textrm{(i)} & M \ \textrm{is a Cartan-Hadamard manifold of dimension $N\ge2$} \, ; \\
\textrm{(ii)} & \textrm{Ric}_o(x)\geq -C_0 \left(1+d(x,o)^\gamma \right) \ \textrm{for some } C_0>0 \ \textrm{and } \gamma\in(-\infty,2] \, .
\end{cases}
\end{equation}
For instance, assumption \eqref{H} is satisfied if $M=\mathbb H^N$, with $ \gamma=0 $. More generally, it is not difficult to show that \eqref{H} is met e.g.~by Riemannian models associated with suitable convex functions $\psi$ such that
\begin{equation}\label{e50z}
\psi(\rho) = e^{f(\rho)} \, , \quad f(\rho) \sim C \, \rho^{1+\frac{\gamma}2} = C \, \rho^{2-\sigma} \quad \textrm{as } \rho \to \infty \quad \textrm{if } \gamma\in(-2, 2]
\end{equation}
or
\begin{equation}\label{e51z}
\psi(\rho) \sim C \, \rho^{\delta} \quad \textrm{as } \rho \to \infty \, , \quad \delta := \frac{1+\sqrt{1+ 4C_0/(N-1)}}{2} \, , \quad \textrm{if } \gamma=-2 \, ,
\end{equation}
where $C$ are positive constants. If $\gamma<-2$ the corresponding models are very close to the Euclidean space, namely $\psi(\rho) \sim C \, \rho $ as $\rho\to\infty$. Here by $ f(\rho) \sim g(\rho) $ we mean that the ratio $ f(\rho)/g(\rho) $ tends to $1$ as $ \rho \to \infty $.

We refer the reader to \cite[Section 2.3]{GMV} for more details in this regard. Below, when it is needed, we shall be more precise and show how it is possible, under our curvature assumptions, to exploit the Laplacian comparison results recalled above by using specific model manifolds whose behaviour at infinity is indeed the same as \eqref{e50z}, \eqref{e51z} or $ \rho $: see Lemma \ref{prop-comp-ricci} and Lemma \ref{prop-comp-sect}.

\subsection{Functional setting}\label{sf}
In the sequel we shall consistently make use of the functional space $ X_{\infty,\sigma} $, which is defined as the space of all functions $ f \in L^\infty_{\rm loc}(M) $ such that
\begin{equation*}\label{e40z}
|f(x)| \leq C \left(1+ \rho(x)\right)^{\frac{\sigma}{m-1}} \quad \textrm{for a.e. } x \in M
\end{equation*}
for some $ C>0 $, which in general depends on $f$. For later purposes, for all $ r \ge 1 $ we endow $ X_{\infty,\sigma} $ with the norms
\begin{equation}\label{e40-norm}
\left\| f \right\|_{\infty,r} := \sup_{x \in M} \frac{\left| f(x) \right|}{\left(r^2+ \rho(x)^2\right)^{\frac{\sigma}{2(m-1)}}} \, .
\end{equation}
Note that, by definition,
\begin{equation}\label{e40-bis}
|f(x)| \leq \left\| f \right\|_{\infty,r} \left(r^2 + \rho(x)^2 \right)^{\frac{\sigma}{2(m-1)}} \quad \textrm{for a.e. } x \in M \, .
\end{equation}
Moreover, it is readily checked that $ r \mapsto \| f \|_{\infty,r} $ is nonincreasing and satisfies
\begin{equation}\label{e40-limsup-final}
\lim_{r \to \infty}  \left\| f \right\|_{\infty,r} = \limsup_{\rho(x)\to\infty} \frac{\left| f(x) \right|}{\rho(x)^{\frac{\sigma}{m-1}}} \, ,
\end{equation}
a useful identity that we shall exploit below.

\subsection{Existence and uniqueness results}\label{sect: exuni}

In this section we first provide a general notion of solution to \eqref{e64} and then establish well-posedness results for initial data and solutions belonging to $X_{\infty,\sigma}$.

\begin{den}\label{defsol}
Let $T>0$ and $ u_0 \in L^\infty_{\rm{loc}}(M) $. We say that $u\in L^\infty_{\rm{loc}}(M\times [0, T))$ is a (very weak) solution of problem \eqref{e64} in the time interval $[0,T)$
if
\begin{equation}\label{q50}
-\int_0^T \int_M u\, \phi_t\,  d\mu dt \,=\, \int_0^T \int_M u^m \, \Delta \phi\, d\mu dt + \int_M u_0(x) \, \phi(x,0)\, d\mu
\end{equation}
for all $\phi\in C^\infty_c(M\times [0, T))$.

Furthermore, we say that $u\in L^\infty_{\rm{loc}}(M\times [0, T))$ is a subsolution (supersolution) of problem \eqref{e64} if \eqref{q50} holds with ``$=$'' replaced by ``$\leq$'' (``$\geq$''), for all $\phi\in C^\infty_c(M\times [0, T))$ with $ \phi\geq 0$.
\end{den}

Solutions will from now on be understood in the very weak sense described above, and we shall often refer to them simply as ``solutions'' since no ambiguity occurs.

\smallskip

Concerning existence, we have the following result.
\begin{thm}[Existence]\label{thmexi}
Let assumption \eqref{H} be satisfied with $\gamma\in(-\infty,2)$. Let $u_0$ be a measurable function satisfying
\[
|u_0(x)| \leq C \left( 1 + \rho(x) \right)^{\frac{\sigma}{m-1}} \quad \textrm{for a.e. } x\in M
\]
for some $C>0$, with $\sigma$ being given in \eqref{m3}. Then there exists a solution $u$
of problem \eqref{e64} with
\begin{equation}\label{n2}
T= \underline{C} \left\| u_0 \right\|^{1-m}_{\infty,r} ,
\end{equation}
%
where $\underline{C}$ is a positive constant depending on $C_0,\gamma,N, m$ but not on $  r \ge 1 $. Furthermore, $u$ satisfies the pointwise estimate
\begin{equation}\label{thm-limit-abs}
\left| u(x,t) \right| \le \left( 1 - \frac{t}{T} \right)^{-\frac{1}{m-1}} \left\| u_0 \right\|_{\infty,r} \left( r^2+ \rho(x)^2 \right)^{\frac{\sigma}{2(m-1)}} \quad \textrm{for a.e. } (x,t) \in M \times (0,T) \, .
\end{equation}
In particular,
\begin{equation}\label{thm-norms}
\left\| u(t) \right\|_{\infty,r} \le \left( 1 - \frac{t}{T} \right)^{-\frac{1}{m-1}} \left\| u_0 \right\|_{\infty,r} \quad \textrm{for a.e. } t \in (0,T)
\end{equation}
and
\begin{equation}\label{thm-limsup}
\limsup_{\rho(x)\to\infty} \frac{\left| u(x,t) \right|}{\rho(x)^{\frac{\sigma}{m-1}}} \le \left( 1 - \frac{t}{T} \right)^{-\frac{1}{m-1}} \, \limsup_{\rho(x)\to\infty} \frac{\left| u_0(x) \right|}{\rho(x)^{\frac{\sigma}{m-1}}} \quad \textrm{for a.e. } t \in (0,T)  \, .
\end{equation}
As a consequence, the solution exists at least up to
\begin{equation}\label{thm-tmax}
T=\underline{C} \left[ \limsup_{\rho(x)\to\infty} \frac{\left| u_0(x) \right|}{\rho(x)^{\frac{\sigma}{m-1}}} \right]^{1-m} ,
\end{equation}
so that $ \lim_{\rho(x)\to\infty} \rho(x)^{-\frac{\sigma}{m-1}} \, | u_0(x) | = 0 $ implies global existence.
\end{thm}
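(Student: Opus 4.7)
The plan is a barrier-plus-approximation argument: as emphasised in the introduction, no global Aronson--B\'enilan estimate is available at this level of generality, so the construction must be driven by a pointwise supersolution that already encodes the correct growth at infinity. First I would approximate $u_0$ by the truncations $u_{0,n} := \sign(u_0) \min(|u_0|, n) \, \chi_{B_n}$, which are bounded and compactly supported, so that standard theory on Cartan--Hadamard manifolds (see \cite{GMPrm}) produces a unique bounded weak solution $u_n$ of \eqref{e64} on $M \times (0,\infty)$; note that $\|u_{0,n}\|_{\infty,r} \leq \|u_0\|_{\infty,r}$, so the barrier in the next step will be $n$-independent.

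Next I would take
\begin{equation*}
U(x,t) := \left( 1 - \frac{t}{T} \right)^{-\frac{1}{m-1}} \|u_0\|_{\infty,r} \left( r^2 + \rho(x)^2 \right)^{\frac{\sigma}{2(m-1)}} ,
\end{equation*}
with $T$ to be fixed, and try to show it is a supersolution. Since $U$ is radial and nondecreasing in $\rho$, \eqref{e1} together with the Laplacian comparison \eqref{e4} yields
\begin{equation*}
\Delta \left( U^m \right) \leq \left( U^m \right)_{\rho\rho} + (N-1) \, \frac{\psi'(\rho)}{\psi(\rho)} \, \left( U^m \right)_{\rho} ,
\end{equation*}
for a model function $\psi$ matching the Ricci lower bound \eqref{H}-(ii), of the type \eqref{e50z} if $\gamma\in(-2,2]$, \eqref{e51z} if $\gamma=-2$, and $\psi(\rho) = \rho$ if $\gamma<-2$. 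A direct computation then shows that at large $\rho$ the first-order term dominates and grows in each case like $(\psi'(\rho)/\psi(\rho)) \, \rho \, (r^2+\rho^2)^{m\sigma/(2(m-1))-1} \sim \rho^{\sigma/(m-1)}$, i.e.~at the same rate as the spatial profile of $U$ itself. This is the point where the definition \eqref{m3} of $\sigma$ and the exponent $\sigma/(2(m-1))$ become critical: $\sigma = (2-\gamma)/2 \wedge 2$ is exactly the choice for which $\Delta(U^m)$ grows at most like $U$ in space, so that the inequality $U_t \geq \Delta(U^m)$ decouples into a purely time-dependent inequality, satisfied as soon as $T \leq \underline{C} \, \|u_0\|_{\infty,r}^{1-m}$ with $\underline{C} = \underline{C}(C_0,\gamma,N,m)>0$, yielding \eqref{n2}.

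With the supersolution in hand I would compare $u_n$ with $\pm U$ to obtain $|u_n(x,t)| \leq U(x,t)$ on $M \times (0,T)$; since $U$ is unbounded, this step is delicate, and I would justify it by testing the weak formulation of $u_n \mp U$ against a spatial cut-off $\eta_R$ whose Laplacian is controlled via \eqref{e5}, exploiting the stochastic completeness ensured by \eqref{H}-(ii). The resulting uniform local $L^\infty$ bound on $\{u_n\}$, combined with standard interior regularity for the porous medium equation on Riemannian manifolds, yields compactness of $\{u_n\}$ in $L^{1}_{\mathrm{loc}}(M \times (0,T))$; the limit $u$ satisfies \eqref{q50} and inherits $|u| \leq U$, i.e.~\eqref{thm-limit-abs}. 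Dividing by $(r^2+\rho^2)^{\sigma/(2(m-1))}$ and taking the supremum in $x$ gives \eqref{thm-norms}, while letting $r \to \infty$ and using \eqref{e40-limsup-final} produces \eqref{thm-limsup}; optimising $T$ in \eqref{n2} over $r \ge 1$ then yields the improved lifespan \eqref{thm-tmax}.

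The main expected obstacle is the verification in the second paragraph: one has to check that the polynomial-in-$\rho$ supersolution has a Laplacian controlled precisely by $U$ itself, and that the constants cooperate so that the differential inequality decouples into an ODE in $t$ with a sharp power dependence on $\|u_0\|_{\infty,r}$. A secondary difficulty is rigorously justifying the comparison with the unbounded supersolution $U$; this is where the quadratic Ricci lower bound \eqref{H}-(ii) enters, via an exhaustion by cut-offs with controllable Laplacian.
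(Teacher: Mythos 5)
Your proposal hinges on the same central idea as the paper's proof — the separable supersolution
\begin{equation*}
U(x,t)=\left(1-\tfrac tT\right)^{-\frac1{m-1}}\|u_0\|_{\infty,r}\left(r^2+\rho(x)^2\right)^{\frac{\sigma}{2(m-1)}},
\end{equation*}
verified via Laplacian comparison under the Ricci lower bound — and your computation in the second paragraph is exactly what Proposition \ref{supersol} formalizes, with Lemma \ref{prop-comp-ricci} packaging the bound $m(\rho,\theta)\le C'\rho^{-1}(1+\rho)^{2-\sigma}$. The genuine difference is the approximation scheme. You truncate the \emph{data} ($u_{0,n}$ bounded, compactly supported) and solve the Cauchy problem on all of $M$, then compare the bounded solutions $u_n$ against the unbounded barrier $U$ on the full manifold via cut-offs. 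The paper instead truncates the \emph{domain}: it solves the Dirichlet problem on balls $B_R$ (Proposition \ref{pro1}), so that $u_R=0$ on $\partial B_R$ while $\overline u>0$ there, making the comparison $|u_R|\le\overline u$ on $B_R\times(0,T)$ completely standard without any cut-off argument, and then passes to the limit $R\to\infty$ by weak$^\ast$-compactness. Your ball-exhaustion approach needs more external machinery (global existence for bounded data on $M$, a comparison principle with an unbounded supersolution, interior regularity for strong compactness), whereas the paper's is more self-contained. Both arrive at the same estimate \eqref{thm-limit-abs}, and your passages to \eqref{thm-norms}, \eqref{thm-limsup} and \eqref{thm-tmax} (taking $r\to\infty$ via \eqref{e40-limsup-final}, using that $r\mapsto\|u_0\|_{\infty,r}$ is nonincreasing so $T$ in \eqref{n2} is nondecreasing in $r$) match the paper.

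Two remarks to tighten your argument. First, the cut-off comparison step is over-engineered: since each $u_n$ is \emph{bounded} (with a bound depending on $n$), while $U(\cdot,t)\to\infty$ as $\rho\to\infty$ for each $t\in[0,T)$, you can simply compare $u_n$ and $U$ on $B_R\times(0,T)$ for $R=R(n)$ large enough that $U\ge\sup|u_n|$ on $\partial B_R$; no stochastic completeness or cut-off Laplacian estimate is needed. Second, and this is a small but real slip: you cite \eqref{e5} — a \emph{lower} bound $m(\rho,\theta)\ge(N-1)/\rho$ coming from nonpositive sectional curvature — as the tool for controlling the Laplacian of a decreasing radial cut-off, whereas controlling $|\Delta\eta_R|$ for $\eta_R'\le0$ requires an \emph{upper} bound on $m(\rho,\theta)$, i.e.\ precisely \eqref{eq: m-est-above} of Lemma \ref{prop-comp-ricci} (this is what the paper uses when it needs cut-off estimates, cf.\ \eqref{n13} in the uniqueness proof). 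If you pursue the cut-off route you must invoke the Ricci-derived upper bound, not \eqref{e5}.
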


\smallskip

In the class of solutions belonging to $ X_{\infty,\sigma} $, we can also establish uniqueness.
\begin{thm}[Uniqueness]\label{thmuniq}
Let assumption \eqref{H} be satisfied. Let $u, v$ be any two solutions of problem \eqref{e64} corresponding to the same $ u_0 \in X_{\infty,\sigma} $, up to the same time $T>0$. Suppose that \eqref{e20z} holds both for $u$ and $v$. Then $u = v$ a.e.~in $M\times (0, T)$.
\end{thm}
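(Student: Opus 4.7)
The plan is to use the classical \emph{duality method} for uniqueness of porous-medium-type equations (\cite{Pierre,Vaz07,BCP}), adapted to the Riemannian setting and to data of possibly unbounded growth. Setting $w:=u-v$ and
\[
a(x,t):=\begin{cases} \dfrac{|u|^{m-1}u-|v|^{m-1}v}{u-v} & \textrm{if } u(x,t)\neq v(x,t),\\ 0 & \textrm{otherwise},\end{cases}
\]
and subtracting the weak formulations \eqref{q50} of $u$ and $v$ tested against any $\phi\in C^\infty_c(M\times[0,T))$ yields
\[
\int_0^T\!\!\int_M w\,(\phi_t+a\,\Delta\phi)\,d\mu\,dt=0,
\]
with $0\le a(x,t)\le K(1+\rho(x)^\sigma)$ a.e., thanks to \eqref{e20z} and the elementary inequality $|s^m-r^m|\le C_m(|s|^{m-1}+|r|^{m-1})|s-r|$. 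To prove $u=v$ it suffices to show that $\int_0^T\!\!\int_M w\,\chi\,d\mu\,dt=0$ for every $0\le\chi\in C^\infty_c(M\times(0,T))$.

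Fix such $\chi$, pick $R$ with $\operatorname{supp}\chi\Subset B_R\times(0,T)$, and for $\varepsilon\in(0,1)$ mollify $a$ inside $B_R\times(0,T)$ to a smooth $a_{\varepsilon,R}$ satisfying $\varepsilon\le a_{\varepsilon,R}\le K(1+R^\sigma)+1$ and $a_{\varepsilon,R}\to a$ in $L^1(B_R\times(0,T))$. Solve the localized backward dual problem
\[
\phi_t+a_{\varepsilon,R}\,\Delta\phi=\chi \textrm{ in } B_R\times(0,T), \quad \phi|_{\partial B_R}=0, \quad \phi(\cdot,T)=0,
\]
obtaining a unique smooth $\phi=\phi_{\varepsilon,R}\le 0$ by standard nondegenerate parabolic theory (after reversing time). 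Extending $\phi$ by zero outside $B_R$, smoothing slightly in time, and using it as a test function in the equation for $w$, a careful integration by parts (handling the jump of $\nabla\phi$ at $\partial B_R$ through a shrinking shell cutoff) produces the duality identity
\[
\int_0^T\!\!\int_{B_R}w\,\chi\,d\mu\,dt=\int_0^T\!\!\int_{B_R}w\,(a-a_{\varepsilon,R})\,\Delta\phi\,d\mu\,dt+\mathcal{B}_R,
\]
where $\mathcal{B}_R$ collects lateral-boundary fluxes involving $\partial_\nu\phi$ on $\partial B_R$.

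The heart of the proof, as already stressed in the introduction, is the construction of a \emph{global} supersolution $-\Phi\ge 0$ to the dual equation on $M\times[0,T]$, independent of $\varepsilon$ and $R$, whose modulus decays fast enough as $\rho\to\infty$ to beat the admissible growth $|w|\lesssim\rho^{\sigma/(m-1)}$. Following \cite{VazH,GMV}, radial functions on $M$ are linked through the area function $A(\rho)$ to a weighted Euclidean equation whose critical weight is controlled by the models \eqref{e50z}--\eqref{e51z}; inspired by the weighted heat-kernel asymptotics of \cite{IS1} (and their PME analogue in \cite{IS2}), I would look for a radial profile of Gaussian type
\[
\Phi(x,t)=-\Lambda\,(T-t+\tau)^{-\alpha}\,\exp\!\bigl(-\beta\,G(\rho(x))^2/(T-t+\tau)\bigr),
\]
with $G$ a primitive encoding the geometric weight coming from $A(\rho)$ and $\alpha,\beta,\tau,\Lambda>0$ tuned so that $\Phi_t+K(1+\rho^\sigma)\Delta\Phi\ge\chi$ throughout $M\times(0,T)$ and $|\Phi|\to 0$ (at least exponentially) as $\rho\to\infty$. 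The comparison principle applied to the linear dual equation then gives $|\phi_{\varepsilon,R}|\le|\Phi|$ uniformly in $\varepsilon,R$. This is the step I expect to be the genuine obstacle: the exponent $\sigma=(2-\gamma)/2\wedge 2$ in the dual coefficient must be matched precisely with the curvature-induced weight in $G$ (via the Laplacian comparison \eqref{e4}) for the supersolution inequality to close on all of $M$.

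Granted $\Phi$, the two limits $\varepsilon\to 0$ and $R\to\infty$ follow by standard duality-method machinery. Hopf-type control of $\partial_\nu\phi$ in a thin shell near $\partial B_R$, combined with the exponential decay of $|\Phi|$, forces $\mathcal{B}_R\to 0$ as $R\to\infty$. Multiplying the dual equation by $\Delta\phi$ and integrating yields the B\'enilan-type weighted bound
\[
\int_0^T\!\!\int_{B_R} a_{\varepsilon,R}\,(\Delta\phi)^2\,d\mu\,dt\le \|\chi\|_{L^2}^2+o(1)
\]
(cf.~\cite{BCP,Pierre}); combined with $a_{\varepsilon,R}\to a$ a.e., the growth of $|w|$ and the decay of $\Phi$, H\"older's inequality then gives $\int_0^T\!\!\int_{B_R} w\,(a-a_{\varepsilon,R})\,\Delta\phi\,d\mu\,dt\to 0$ as $\varepsilon\to 0$, uniformly in $R$. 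Letting first $\varepsilon\to 0$ and then $R\to\infty$ produces $\int_0^T\!\!\int_M w\,\chi\,d\mu\,dt=0$, which concludes the proof.
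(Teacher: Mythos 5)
Your overall strategy is the paper's: duality method with smooth approximants of the coefficient $a$, a backward dual problem on balls, and the construction of a global radial supersolution that dominates the dual solutions uniformly in the approximation parameters. The ansatz you propose for the supersolution, $\exp\bigl(-\beta\,G(\rho)^2/(T-t+\tau)\bigr)$ with $G(\rho)^2\sim\rho^{2-\sigma}$, has exactly the form the paper uses for $\gamma\in(-2,2]$ (where the paper takes $\eta=\lambda\,e^{K(\rho+\rho_0)^{2-\sigma}/(t-T-t_0)}$), and your reference to \cite{IS1,IS2} matches the paper's declared motivation.

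There are two genuine gaps. First, your supersolution does not cover $\gamma\le-2$, which is allowed in \eqref{H} and gives $\sigma=2$. In that regime $2-\sigma=0$, so $\exp(-\beta G(\rho)^2/\cdot)$ degenerates to a function with \emph{no} spatial decay, while the admissible solution class still grows like $\rho^{2/(m-1)}$ and $\operatorname{meas}(S_R)$ grows polynomially. The paper switches to a separable power-decay supersolution $\eta=\lambda\,e^{\alpha(T-t)}(1+\rho^2)^{-\beta}$ (the same barrier as in \cite{BCP}), and one really must split the proof into the cases $-2<\gamma\le 2$ and $\gamma\le -2$. Second, the step ``letting first $\varepsilon\to0$ and then $R\to\infty$'' does \emph{not} close for arbitrary $T$: the boundary contribution involves $\operatorname{meas}(S_R)\lesssim e^{\tilde C R^{2-\sigma}}$, which must be beaten by the decay rate $\exp(-\tfrac{K}{T+t_0}(R-1+\rho_0)^{2-\sigma})$ of the barrier. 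This forces a smallness condition of the form $T<4K/(5\tilde C)$ on the time horizon; the paper then removes the restriction by a finite-step iteration, exploiting the fact that the smallness threshold depends on the data only through the Lipschitz constant of the nonlinearity in \eqref{n5}, which is propagated. Without making the smallness explicit and iterating, the argument as you wrote it is incomplete. As a minor remark, the paper's control of $\partial_\nu\xi_n$ on $\partial B_R$ is obtained not from a Hopf-type bound alone but by a secondary comparison in the shell $B_R\setminus B_{R-1}$ with an explicit radial harmonic barrier (Newtonian potential for $N\ge 3$, logarithmic for $N=2$), calibrated so that its value at $\rho=R-1$ matches the global supersolution; this is what yields the quantitative decay \eqref{n46} used to kill the surface term, and should be spelled out rather than absorbed into ``Hopf-type control.''
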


Note that both in the existence and in the uniqueness result the curvature assumption in \eqref{H} is involved via the constant $\sigma$ defined by \eqref{m3}, which in turn affects the spaces of functions to which the initial data and the solutions belong.
%
%
%
%

\begin{oss}\label{rem: quad} \rm
If assumption $(H)$ is not satisfied, in general, the solution of problem \eqref{e64} with $u_0\in L^\infty(M)$ (which always exists, see Remark \ref{parametri}) is not unique in $L^\infty(M\times (0,T)).$ In fact, let $M\equiv M_\psi$ be any model manifold with nonpositive sectional curvature such that
\begin{equation}\label{e3ce}
\operatorname{Ric}_o(x)\leq - C_0 \, \rho^{2+\epsilon} \quad \forall x \equiv (\rho, \theta) \in M : \ \rho>1 \, , \quad \textrm{for some} \ \epsilon>0 \ \textrm{and} \ C_0>0 \, .
\end{equation}
Clearly hypothesis $(H)$-(ii) is not met for all $ \epsilon>0 $. The curvature condition \eqref{e3ce} is associated with model manifolds whose model function $ \psi(\rho) $ behaves like $ e^{ f(\rho)}$ with $ f(\rho) \sim C \, \rho^{2+\frac{\epsilon}{2}}$ as $\rho \to \infty$, for a suitable $ C>0 $. In view of \eqref{e1ce}, we also have that $(N-1) \operatorname{K}_{\omega}(x)\leq - C_0 \, \rho^{2+\epsilon} $ for all $ x\equiv (\rho, \theta)\in M$ such that $ \rho>1$. This ensures that the hypotheses of \cite[Theorem 15.4(b)]{Grig} are satisfied; therefore, as it is observed in the proof of that theorem, we have that
\begin{equation}\label{e2ce}
\int_{M_\psi} \mathcal{G}(o,x) \, d\mu(x) = \int_0^\infty \frac{\mathcal{V}(r)}{\mathcal{S}(r)} \, dr <\infty \, ,
\end{equation}
where $ \mathcal{G} $ is the Green function of $ M_\psi $, $\mathcal{S}(r):=\textrm{meas}(S_r)$ and $ \mathcal{V}(r):=\int_0^r \mathcal{S}(\xi) d\xi,$ the latter being the volume of the ball $B_r$ of radius $ r>0 $. Actually in \cite[Theorem 15.4(b)]{Grig} \eqref{e2ce} is shown to hold for a model manifold satisfying \eqref{e3ce} with equality. Nevertheless, standard comparison theorems for heat kernels (and therefore Green functions, see e.g.~\cite[Theorem 4.2]{Grig2} and \cite[Section 3]{GMPrm}) ensure that, as a consequence, \eqref{e2ce} also holds for $ M_\psi $. Thanks to \eqref{e2ce} we can then apply \cite[Remark 3.12 and Theorem 3.8]{PuAA} to infer that for any  $u_0\in L^\infty(M)$, $ u_0\geq 0$, problem \eqref{e64} admits infinitely many bounded solutions in the sense of Definition \ref{defsol}. More precisely, for every $u_0\in L^\infty(M)$, $ u_0\geq 0$, and for every Lipschitz function $A$ defined in $[0, T]$ with $A(0)=0$, $ A'\geq 0$, there exists a nonnegative bounded solution of problem \eqref{e64} such that $\int_0^t u^m(x,s) ds \to A(t)$ as $\rho(x)\to \infty$ uniformly w.r.t.~$t\in [0, T]$. Hence, in particular, nonuniqueness for problem \eqref{e64} occurs. In the linear case $m=1$, these results are in agreement with the nonuniqueness issues entailed by \cite[Theorem 6.2(4) and Corollary 15.2(d)]{Grig}.
\end{oss}

\subsection{Maximal existence time, nonexistence and blow-up results}
It is worth noting that Theorem \ref{thmexi} does not provide the maximal existence time for a solution to \eqref{e64} corresponding to an initial datum in $ X_{\infty,\sigma} $, but only a lower estimate for the latter given by \eqref{n2}. In principle the solution, in the sense of Definition \ref{defsol}, could even exist as such for \emph{all} times (which is indeed the case if e.g.~$ u_0 \in L^\infty(M) $).

We stress that, in the next results, by ``maximal existence time'' not only we mean the largest time up to which the solution exists in $X_{\infty,\sigma}$, but more in general the largest time up to which \emph{any nonnegative solution}, in the sense of Definition \ref{defsol}, can exist.

Under an additional upper bound on the sectional curvature that matches the assumed lower bound on the Ricci curvature,
we can show that initial data with a prescribed power-type growth at infinity do give rise to solutions that cease to exist in finite time.
More precisely, we have the following.

\begin{thm}[Maximal existence time]\label{opt1}
Let assumption \eqref{H} be satisfied for some  $\gamma \in (-\infty, 2)$. If $ \gamma \in (-2,2) $, assume in addition that there exist $ C_1 , R_1 > 0 $ such that
\begin{equation}\label{assumption-sectional-intro}
\operatorname{K}_{\omega}(x)\leq - C_1 \, d(x,o)^\gamma \quad \forall x \in M \setminus B_{R_1}  \, .
\end{equation}
Let $ u_0 \in X_{\infty,\sigma} $ be any nonnegative initial datum satisfying
\begin{equation}\label{tmax-opt}
\liminf_{\rho(x)\to\infty} \frac{u_0(x)}{\rho(x)^{\frac{\sigma}{m-1}}} > 0 \, .
\end{equation}
Then there exists a positive constant $ \overline{C} $, depending only on $C_1, R_1,\gamma,N,m$, such that the maximal time $T$ for which the corresponding solution $u$ of problem \eqref{e64} exists, satisfies
\begin{equation}\label{mm30-datum}
T \le \overline{C} \left[ \liminf_{\rho(x)\to\infty} \frac{u_0(x)}{\rho(x)^{\frac{\sigma}{m-1}}} \right]^{1-m} .
\end{equation}
In particular, for some $ 0 < \tau \le T  $ there holds
\begin{equation}\label{mm30}
\limsup_{t\to \tau^-} \, \| u(t) \|_{\infty,r}  = \infty
\end{equation}
for all $ r \ge 1 $.
\end{thm}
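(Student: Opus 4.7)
The plan is to construct an explicit nonnegative radial subsolution of the form
\[
U(x,t) = (T_0-t)^{-\frac{1}{m-1}} F(\rho(x)), \quad (x,t)\in M\times[0,T_0),
\]
with $F(\rho)\sim c\,\rho^{\sigma/(m-1)}$ at infinity, blowing up pointwise as $t\to T_0^-$, and then to force $u\ge U$ by a PME comparison argument. The prefactor is calibrated so that $U_t$ and $\Delta(U^m)$ share the same time power, reducing the subsolution condition $U_t\le\Delta(U^m)$ to the stationary inequality $\Delta(F^m)\ge F/(m-1)$. Matching $U(x,0)$ with the pointwise lower bound on $u_0$ coming from \eqref{tmax-opt} will then fix $T_0$ as a multiple of $[\liminf_{\rho(x)\to\infty}u_0/\rho^{\sigma/(m-1)}]^{1-m}$, which is exactly \eqref{mm30-datum}.

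The first step is to convert the upper bound \eqref{assumption-sectional-intro} on sectional curvature into a lower bound on $m(\rho,\theta)=\Delta\rho$ via the Laplacian comparison recalled in Section \ref{lapl-comp}. For $\gamma\in(-2,2)$ I would take a model function $\psi$ of the type \eqref{e50z} chosen so that $\psi''/\psi\le C_1\rho^\gamma$ outside $B_{R_1}$; then \eqref{e3a}--\eqref{e3} yield $m(\rho,\theta)\ge(N-1)\psi'(\rho)/\psi(\rho)\sim c_\gamma\,\rho^{1-\sigma}$ as $\rho\to\infty$, with $c_\gamma>0$ depending only on $C_1,\gamma,N$. For $\gamma\le-2$ (where $\sigma=2$) the baseline Cartan--Hadamard bound \eqref{e5}, namely $m(\rho,\theta)\ge(N-1)/\rho$, already suffices, which explains why \eqref{assumption-sectional-intro} is imposed only for $\gamma\in(-2,2)$.

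Taking then the smooth radially increasing profile $F(\rho)=c\,(1+\rho^2)^{\sigma/(2(m-1))}$, which carries the right growth and satisfies $(F^m)'\ge0$, the expansion $\Delta(F^m)=(F^m)''+m(\rho,\theta)(F^m)'$ shows that as $\rho\to\infty$ the drift term dominates and behaves like
\[
c^m\,(N-1)\,c_\gamma\,\frac{m\sigma}{m-1}\,\rho^{\sigma/(m-1)},
\]
to be compared with $F/(m-1)\sim c\,\rho^{\sigma/(m-1)}/(m-1)$ (note that the two exponents match precisely because $m\sigma/(m-1)-\sigma=\sigma/(m-1)$). Hence $\Delta(F^m)\ge F/(m-1)$ holds on $M\setminus B_R$ for $R$ large enough, as soon as $c^{m-1}\ge C_\ast$ with an explicit $C_\ast=C_\ast(C_1,R_1,\gamma,N,m)$; I would fix $c^{m-1}=C_\ast$, so that $\overline C=C_\ast$ has the dependence stated in the theorem.

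The main obstacle is the comparison step, because both $u$ and $U$ grow at infinity and $U$ is a subsolution only outside a ball. Setting $c_0:=\liminf_{\rho(x)\to\infty}u_0(x)/\rho(x)^{\sigma/(m-1)}>0$ and, for any $\varepsilon>0$, choosing $T_0=(c/c_0)^{m-1}(1+\varepsilon)=\overline C\,c_0^{1-m}(1+\varepsilon)$ ensures $U(x,0)\le u_0(x)$ for $\rho\ge R_\varepsilon$ with $R_\varepsilon$ large. For every $\tau<\min(T,T_0)$, I would compare $u$ with $(U-k_\tau)_+$ on the cylinders $(B_{R_n}\setminus B_{R_\varepsilon})\times[0,\tau]$, $R_n\uparrow\infty$, where $k_\tau:=\sup_{\partial B_{R_\varepsilon}\times[0,\tau]}U<\infty$ makes $(U-k_\tau)_+$ vanish on the inner lateral boundary and the truncation preserves the PME subsolution property; passing to the limit $R_n\to\infty$ (controlling the outer boundary contribution by the local boundedness of $u$ up to $T$) yields $u\ge(U-k_\tau)_+$ on $(M\setminus B_{R_\varepsilon})\times[0,\tau]$. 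Letting $\tau\to T_0^-$ forces $u$ to blow up pointwise for $\rho$ large, so the maximal existence time satisfies $T\le T_0$; letting $\varepsilon\to0$ then gives \eqref{mm30-datum}. Finally \eqref{mm30} follows at once by plugging the pointwise bound into \eqref{e40-norm} and observing that $(1+\rho^2)^{\sigma/(2(m-1))}/(r^2+\rho^2)^{\sigma/(2(m-1))}\to1$ as $\rho\to\infty$, whence $\|u(t)\|_{\infty,r}\to\infty$ as $t\to T_0^-$, i.e.\ one may take $\tau=T_0\le T$.
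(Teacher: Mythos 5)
Your overall plan---a separable subsolution $U(x,t)=(T_0-t)^{-1/(m-1)}F(\rho(x))$ with critical growth, blowing up at $T_0\sim\overline{C}\,c_0^{1-m}$, forced below $u$ by comparison---is the same as the paper's, and the ODE analysis reducing the subsolution inequality to $\Delta(F^m)\ge F/(m-1)$ via Laplacian comparison (Lemma \ref{prop-comp-sect}) is also the paper's. However, the comparison step contains two genuine gaps.

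First, the additive truncation $(U-k_\tau)_+$ does \emph{not} in general preserve the PME subsolution property: the porous medium equation is not invariant under $u\mapsto u-k$, and $\big[(U-k)_+\big]^m$ is not $U^m$ minus a constant, so $\Delta\big[(U-k)_+\big]^m$ is not controlled by $\Delta U^m$. The correct device, which the paper uses in \eqref{choice-pre-pre}, is the truncation at the level of $V^m$, namely $V_{T,\delta}:=\big(V_T^m-\delta\big)^{1/m}\vee 0$: then $V_{T,\delta}^m=(V_T^m-\delta)_+$, and Kato's inequality gives $\Delta V_{T,\delta}^m\ge\chi_{\{V_T^m>\delta\}}\Delta V_T^m$ weakly, so \eqref{choice-pre} holds.

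Second, and more seriously, the comparison on expanding annuli $(B_{R_n}\setminus B_{R_\varepsilon})\times[0,\tau]$ does not close. To compare on a bounded cylinder you need the ordering on the entire parabolic boundary; on $\partial B_{R_n}$ you would need $u\ge (U-k_\tau)_+$, which is not available. ``Local boundedness of $u$'' does not help: $u$ may be small near $\partial B_{R_n}$ while the subsolution grows like $R_n^{\sigma/(m-1)}$, and the outer boundary term in the weak formulation (a flux of $(U^m-u^m)_+$ integrated over $S_{R_n}$) need not vanish since $\operatorname{meas}(S_{R_n})$ can grow exponentially in $R_n^{2-\sigma}$. This is exactly the difficulty that the paper's duality-method uniqueness/comparison (Theorem \ref{thmuniq}, Corollary \ref{comppr}) is built to resolve, and it is the main technical content of Section \ref{uniqueness}. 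The paper's Lemma \ref{lem: exist-max} does not attempt a direct comparison of the subsolution with the arbitrary nonnegative solution $u$: it defines the maximal time $\tau$ with $u\ge\underline{u}$, and, assuming $\tau<T$, inserts the auxiliary solution $\hat u$ with datum $\underline u(\tau)\in X_{\infty,\sigma}$. Then $\hat u\ge\underline u$ by Corollary \ref{comppr} (both lie in $X_{\infty,\sigma}$), while $\hat u\le u$ follows by comparison on balls with zero boundary data for the approximations $\hat u_R$ together with uniqueness for $\hat u$; the contradiction with maximality of $\tau$ finishes the proof. Relatedly, the paper's Lemma \ref{lem: exist-subsol} chooses the inner scale $r$ large enough that $W_{T,r}\le (m-1)T\,\Delta(W_{T,r}^m)$ holds on \emph{all} of $M$, not merely outside a ball; a global subsolution in $X_{\infty,\sigma}$ is needed to invoke Corollary \ref{comppr}, which makes your annular detour unnecessary. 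With your fixed $r=1$ the inequality fails near the origin, which is what forces you into the annular comparison that cannot be closed.
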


As a direct consequence of Theorem \ref{opt1}, we also have a nonexistence result for initial data growing at infinity \emph{faster} than the critical power $\rho(x)^{\frac{\sigma}{m-1}}$.
\begin{cor}[Nonexistence]\label{opt2}
Let the assumptions of Theorem \ref{opt1} be fulfilled. Let $ u_0 \in L^\infty_{\rm loc}(M) $ be any nonnegative initial datum satisfying
\begin{equation}\label{mm22}
\lim_{\rho(x)\to \infty} \frac{u_0(x)}{\rho(x)^{\frac{\sigma}{m-1}}} = +\infty \, .
\end{equation}
Then problem \eqref{e64} does not admit any nonnegative solution in $[0,T)$, given any $T>0$.
\end{cor}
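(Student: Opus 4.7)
The plan is to argue by contradiction, reducing to Theorem \ref{opt1} after truncating $u_0$ into the class $X_{\infty,\sigma}$. Suppose that for some $T > 0$ there exists a nonnegative solution $u \in L^\infty_{\rm{loc}}(M \times [0, T))$ of \eqref{e64}, and pick a parameter $\lambda > 0$ (to be sent to infinity at the end).

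First I would introduce the truncated datum
\[
\tilde u_0(x) := \min\bigl\{u_0(x),\, \lambda\,(1+\rho(x))^{\sigma/(m-1)}\bigr\},
\]
so that $0 \leq \tilde u_0 \leq u_0$ and $\tilde u_0 \in X_{\infty,\sigma}$. Since by \eqref{mm22} we have $u_0(x) \geq \lambda(1+\rho(x))^{\sigma/(m-1)}$ for $\rho(x)$ larger than some $R_\lambda$, the identity $\tilde u_0(x) = \lambda(1+\rho(x))^{\sigma/(m-1)}$ holds in that region, whence
\[
\liminf_{\rho(x)\to\infty} \frac{\tilde u_0(x)}{\rho(x)^{\sigma/(m-1)}} = \lambda > 0,
\]
so that Theorem \ref{opt1} becomes applicable to $\tilde u_0$.

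The key step is then to produce a bona fide nonnegative very weak solution $\tilde u$ of \eqref{e64} with datum $\tilde u_0$ defined on the whole interval $[0, T)$ and satisfying $\tilde u \leq u$. I would do this by the standard approximation on expanding geodesic balls: set $\tilde u_{0,n} := \min\{\tilde u_0, n\}\,\chi_{B_n}$, solve the Dirichlet problem for \eqref{e64} on $B_n \times (0, T)$ with this datum and zero lateral boundary values to obtain a smooth solution $\tilde u_n$, and apply the classical PME comparison principle on each bounded cylinder. Since $u \geq 0$ at $\partial B_n$ and $u_0 \geq \tilde u_{0,n}$ on $B_n$, comparison yields $\tilde u_n \leq u$. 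Monotonicity of the sequence $(\tilde u_n)$ in $n$ then provides a pointwise limit $\tilde u \leq u$ in $L^\infty_{\rm{loc}}(M \times [0, T))$, and passing to the limit in \eqref{q50} as in the proof of Theorem \ref{thmexi} identifies $\tilde u$ as a very weak solution with initial datum $\tilde u_0$ on $[0, T)$.

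Finally, Theorem \ref{opt1} applied to $\tilde u$ gives
\[
T \leq \overline{C}\,\lambda^{1-m}.
\]
Since $m > 1$, letting $\lambda \to \infty$ sends the right-hand side to $0$, contradicting $T > 0$. The main obstacle is the comparison/approximation step producing $\tilde u$ as an honest very weak solution on all of $[0, T)$ — in particular identifying its initial trace — but this should follow cleanly from the monotone convergence $\tilde u_n \nearrow \tilde u$, the dominating bound $\tilde u \leq u \in L^\infty_{\rm{loc}}$, and the distributional machinery already developed for Theorem \ref{thmexi}.
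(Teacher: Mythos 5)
Your argument is correct, but it takes a noticeably longer route than the paper's. The paper's proof of Corollary \ref{opt2} is direct: given any $T'>0$, use \eqref{mm22} to choose $\delta$ so large that $u_0 \ge V_{T',\delta}$ (the subsolution from the proof of Theorem \ref{opt1}), and then apply Lemma \ref{lem: exist-max} straight to $u_0$. Crucially, Lemma \ref{lem: exist-max} makes no membership assumption on $u_0$ --- it only requires the subsolution $V_{T'}$ to lie in $X_{\infty,\sigma}$ and $u_0 \ge V_{T'}$ --- so it caps the maximal existence time of any nonnegative solution with datum $u_0$ by $T'$, and since $T'>0$ is arbitrary, no solution can exist. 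Your proof instead truncates $u_0$ into $X_{\infty,\sigma}$ at an adjustable level $\lambda$ so as to invoke Theorem \ref{opt1}, but this forces you to manufacture a new very weak solution $\tilde u$ with datum $\tilde u_0$ on the entire interval $[0,T)$, which you do via a monotone Dirichlet exhaustion dominated by $u$. That step is correct (the comparison of a weak Dirichlet solution against a nonnegative very weak solution on a ball is exactly the one cited from \cite{ACP,Vaz07} inside the proof of Lemma \ref{lem: exist-max}, and the passage to the limit in \eqref{q50} works by monotone/dominated convergence under the local $L^\infty$ bound $\tilde u \le u$), but it essentially replays the $\hat u_R$ construction already buried inside Lemma \ref{lem: exist-max}, so your route duplicates that technical work. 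The paper's argument is leaner precisely because Lemma \ref{lem: exist-max} already applies to data of super-critical growth without truncation; your version is a valid alternative and has the minor expository advantage of going through the more visible statement of Theorem \ref{opt1} rather than the auxiliary lemma.
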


In the case of model manifolds $ M \equiv M_\psi $, given any $T>0$ we can provide classes of solutions which \emph{blow up pointwise} at the maximal existence time $T$. To this end, a relevant role is played by the positive solutions $ \mathsf{W}_{T,\alpha} $ (let $ T,\alpha>0 $) to the following Cauchy problem:
\begin{equation}\label{pb: cauchy}
\begin{cases}
\frac{1}{(m-1)T} \, \mathsf{W}_{T,\alpha} = \big(\mathsf{W}^m_{T,\alpha} \big)^{\prime\prime} + (N-1) \, \frac{\psi^\prime}{\psi} \big(\mathsf{W}^m_{T,\alpha} \big)^{\prime} \quad \textrm{in } (0,\infty) \, , \\
\mathsf{W}_{T,\alpha}^\prime(0) = 0 \, , \\
\mathsf{W}_{T,\alpha}(0) = \alpha \, .
\end{cases}
\end{equation}
Under suitable curvature assumptions on $ M_\psi $, the well-posedness of \eqref{pb: cauchy}, as well as the fact that solutions belong to $ X_{\infty,\sigma} $ and are ordered w.r.t.~$ \alpha $, will be addressed in detail in the end of Section \ref{blup}.

\begin{thm}[Pointwise blow-up]\label{opt-blow}
Let $ M \equiv M_{\psi} $ be any model manifold satisfying hypothesis \eqref{H} for some $ \gamma \in (-\infty,2) $. Let $T>0$ be fixed but arbitrary. If $ \gamma \in (-2,2) $, assume in addition that \eqref{assumption-sectional-intro} holds for some $ C_1 , R_1 > 0 $. Let $ u_0 $ be any initial datum complying with
\begin{equation}\label{W-W}
\mathsf{W}_{T, \alpha_0}(\rho(x)) \le u_0(x) \le \mathsf{W}_{T,\alpha_1}(\rho(x)) \quad \textrm{for a.e. } x \in M
\end{equation}
for some $ \alpha_1>\alpha_0>0 $. Then the maximal existence time for the corresponding solution $u$ is exactly $T$, where
\begin{equation}\label{eq: u0-blowup}
\frac{k_0}{T^{\frac{1}{m-1}}} \le \liminf_{\rho(x)\to\infty} \frac{ u_0(x) }{\rho(x)^{\frac{\sigma}{m-1}}} \le \limsup_{\rho(x)\to\infty} \frac{ u_0(x) }{\rho(x)^{\frac{\sigma}{m-1}}} \le \frac{k_1}{T^{\frac{1}{m-1}}}
\end{equation}
for positive constants $k_0$ and $k_1$ depending on $ C_0,\gamma,N,m $ and on $ C_1, R_1,\gamma,N,m $, respectively, but not on $\alpha$. More precisely, $u$ blows up at $ t=T $ almost everywhere, that is
\begin{equation}\label{blow-up-ptws}
\lim_{t \to T^-} u(x,t) = +\infty \quad \textrm{for a.e. } x \in M \, .
\end{equation}
\end{thm}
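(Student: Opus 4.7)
The plan is to sandwich $u$ between two \emph{separable-in-time} radial solutions built from $\mathsf{W}_{T,\alpha_0}$ and $\mathsf{W}_{T,\alpha_1}$. Concretely, I would set
\[
U_{T,\alpha}(x,t) := \left(1 - \tfrac{t}{T}\right)^{-\frac{1}{m-1}} \mathsf{W}_{T,\alpha}(\rho(x)) \, ,
\]
and verify, using the radial form \eqref{mm43} of the Laplace--Beltrami operator, that \eqref{pb: cauchy} is precisely the compatibility condition making $U_{T,\alpha}$ a classical radial solution of \eqref{e64} on $M_\psi \times [0,T)$ with initial datum $\mathsf{W}_{T,\alpha}(\rho(\cdot))$. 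By construction, $U_{T,\alpha}(x,t) \to +\infty$ as $t \to T^-$ for every $x \in M_\psi$.

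\textbf{Sandwich, sharp maximal time, and pointwise blow-up.} Hypothesis \eqref{W-W} orders the initial data, $U_{T,\alpha_0}(\cdot,0) \le u_0 \le U_{T,\alpha_1}(\cdot,0)$. The core step is a comparison principle within $X_{\infty,\sigma}$, which I would set up as the natural counterpart of Theorem \ref{thmuniq}: applying the duality argument of that theorem to the positive part of the difference of a sub- and a supersolution delivers ordering. Alternatively, one can propagate ordering through the monotone approximation scheme underlying Theorem \ref{thmexi}. This yields
\[
U_{T,\alpha_0}(x,t) \le u(x,t) \le U_{T,\alpha_1}(x,t) \quad \textrm{a.e. on } M_\psi \times [0,T_{\max}).
\]
The upper bound keeps $u$ inside $X_{\infty,\sigma}$ on every $[0,T']$ with $T'<T$, so Theorem \ref{thmexi} allows continuation up to $T$, giving $T_{\max}\ge T$. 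The lower bound, combined with the pointwise positivity of $\mathsf{W}_{T,\alpha_0}$, forces the pointwise blow-up \eqref{blow-up-ptws} at $t=T$; hence $T_{\max}=T$ exactly.

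\textbf{Asymptotic profile and the constants in \eqref{eq: u0-blowup}.} For the last assertion, I would establish, together with the well-posedness of \eqref{pb: cauchy} (to be carried out in Section \ref{blup}), the sharp two-sided asymptotics
\[
\mathsf{W}_{T,\alpha}(\rho) \sim \frac{k(\alpha,T)}{T^{\frac{1}{m-1}}} \, \rho^{\frac{\sigma}{m-1}} \quad \textrm{as } \rho \to \infty ,
\]
with $k(\alpha,T)$ trapped between two positive constants $k_0, k_1$ depending only on the geometric data through Lemmas \ref{prop-comp-ricci} and \ref{prop-comp-sect}, but not on $\alpha$. The $T$-scaling is dictated by \eqref{pb: cauchy}, since the rescaling $\mathsf{W}\mapsto T^{-1/(m-1)}\widetilde{\mathsf{W}}$ absorbs $T$ into the equation. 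Combining these asymptotics with \eqref{W-W} and identity \eqref{e40-limsup-final} produces the chain \eqref{eq: u0-blowup}.

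\textbf{Main obstacle.} The two delicate points are (i) the comparison principle in $X_{\infty,\sigma}$ for solutions growing at the critical rate $\rho^{\sigma/(m-1)}$ at infinity, and (ii) the uniform (in $\alpha$) two-sided asymptotic profile of $\mathsf{W}_{T,\alpha}$; both reduce to constructing sharp radial barriers from the model functions of \eqref{e50z}--\eqref{e51z}, using the matching upper sectional bound \eqref{assumption-sectional-intro} for the upper asymptotic and the Ricci lower bound of \eqref{H} for the lower one, which is exactly what makes $\sigma$ the critical exponent.
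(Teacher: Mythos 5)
Your proposal is correct and takes essentially the same route as the paper: you build the separable-in-time radial barriers $(1-t/T)^{-1/(m-1)}\mathsf{W}_{T,\alpha}(\rho(x))$ from the solutions of \eqref{pb: cauchy}, sandwich $u$ between them via the comparison principle (the paper records this exactly as Corollary \ref{comppr}), read off the maximal time and the pointwise blow-up from the sandwich, and obtain \eqref{eq: u0-blowup} from the two-sided asymptotics of $\mathsf{W}_{T,\alpha}$ established in Lemma \ref{elliptic} via the barrier constructions of Lemmas \ref{prop-comp-ricci}--\ref{prop-comp-sect}. The only cosmetic difference is that you also suggest an alternative path to the comparison (through the monotone approximation scheme), whereas the paper uses the duality-based Corollary \ref{comppr}; the substance is identical.
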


Note that \eqref{W-W} is essentially a condition \it at infinity \rm only (plus a positivity requirement).

\smallskip

We point out that when $ \gamma \in (-2,2) $, in order to prove Theorems \ref{opt1}, \ref{opt-blow} and Corollary \ref{opt2}, hypothesis \eqref{assumption-sectional-intro} on sectional curvatures is essential. Indeed, it is plain that the Euclidean space $ \mathbb{R}^N $ does fulfil assumption \eqref{H} for any such $ \gamma $: however, in this case initial data growing like $ \rho(x)^{2/(m-1)} $ are allowed for short-time existence, as well as data with slower growth (in particular like $ \rho(x)^{\sigma/(m-1)} $) guarantee \emph{global} existence.

\begin{oss}\label{parametri}\rm
As the reader may note, in Theorems \ref{thmexi}, \ref{opt1}, \ref{opt-blow} and Corollary \ref{opt2} we do not address the critical case $\gamma=2$. Actually, since $ \sigma=0 $, with such a choice we would have $X_{\infty, \sigma} \equiv L^\infty(M)$. On the other hand, by standard methods (see {e.g.}~\cite{Vaz07}), it can be easily proved that problem \eqref{e64} admits global bounded solutions on any Cartan-Hadamard manifold. Moreover, as a consequence of Theorem \ref{thmuniq}, uniqueness of bounded solutions holds up to $ \gamma=2 $. Yet it remains to be understood whether growing data can still be allowed (clearly, not with a power rate).
Accordingly, in the case $\gamma>2$ one should possibly investigate well-posedness results for initial data suitably \emph{vanishing} at infinity: in this regard recall Remark \ref{rem: quad}.
\end{oss}

\section{Existence: proofs}\label{existence}

This section is devoted to establishing Theorem \ref{thmexi}. To begin with, we outline the main ideas behind our method of proof.

\subsection{Outline of the strategy}
Suppose first that $ u_0 \in X_{\infty, \sigma}$ and $u_0 \geq 0$. For every $R>0$, let us consider the approximate problems
\begin{equation}\label{q10}
\begin{cases}
u_t = \Delta \! \left(u^m\right) & \textrm{in } B_R\times (0,\infty)\,, \\
u = 0 & \textrm{on } \partial B_R \times (0, \infty)\,, \\
u = u_0 \rfloor_{B_R} & \textrm{on } B_R\times \{0\}\,.
\end{cases}
\end{equation}
Existence and uniqueness of a weak solution $u_R$ of problem \eqref{q10}, in the sense of Definition \ref{def2} below, can easily be obtained by standard methods (see Proposition \ref{pro1}). Moreover, thanks to the comparison principle, $ u_R $ is nondecreasing w.r.t.~$R$. Hence, there exists the pointwise limit $u:=\lim_{R\to \infty} u_R$ in $M \times (0,\infty) $. In order to pass to the limit in \eqref{q10} (with $ u \equiv u_R $) so as to show that $u$ solves \eqref{e64}, we need some a priori bound over $ u_R $, which at least guarantees local boundedness of $u$. This will be provided by an explicit separable supersolution $ \overline{u} $ of \eqref{e64}, which exists in $ M \times (0,T) $ and blows up at $ t=T $, the latter being a positive time related to $ u_0 $. Another application of the comparison principle on balls then ensures the validity of the crucial estimate
\begin{equation}\label{m71}
u_R(x,t) \leq \overline{u}(x,t) \quad \textrm{for a.e. } (x,t) \in M \times (0,T) \, , \ \forall R>0\,.
\end{equation}
In the case $ u_0 $ does not have a sign, it is possible to establish also a lower bound analogous to \eqref{m71}. However, the sequence $ u_R $ is in general not monotone, and the passage to the limit in \eqref{q10} has to be carried out by means of weak$^\ast$-convergence arguments. See Section \ref{sect: proofExi} for the details.

\smallskip
We recall here the standard notion of (weak) solution to \eqref{q10}.
\begin{den}\label{def2}
Let $u_0\in L^\infty(B_R)$. By a weak solution of problem \eqref{q10} we mean a function $u\in C([0,\infty); L^1(B_R))\cap L^\infty(B_R\times (0, \infty))$ such that $u^m\in L^2_{\textnormal{loc}}((0,\infty); H^1_0(B_R))$, which satisfies
\[
\int_0^\infty \int_{B_R} u \, \phi_t \, d\mu dt \, = \, \int_0^\infty \int_{B_R} \left\langle \nabla u^m , \nabla \phi \right\rangle d\mu dt
\]
for any $\phi\in C^\infty_c(B_R\times (0, \infty))$ and $u(0) = u_0$.
\end{den}

The following well-posedness result holds for weak solutions to \eqref{q10}, which can be proved exactly as in the case $ M \equiv \mathbb{R}^N $ (see \cite[Section 5]{Vaz07}).
\begin{pro}\label{pro1}
Let $u_0\in L^\infty(B_R)$. Then there exists a unique weak solution $u$ of problem \eqref{q10}, in the sense of Definition \ref{def2}. 
\end{pro}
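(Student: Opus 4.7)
The plan is the classical approximation scheme for degenerate parabolic equations on a bounded smooth domain. Since $B_R$ is relatively compact inside $M$, the Riemannian setting enters only through the Laplace–Beltrami operator on $B_R$, and every tool used in the Euclidean argument remains available. For $\varepsilon\in(0,1)$ I would regularize the nonlinearity as $\Phi_\varepsilon(s):=\varepsilon s+|s|^{m-1}s$, so that $\Phi_\varepsilon'\ge\varepsilon>0$, and approximate $u_0$ by $u_0^\varepsilon\in C^\infty_c(B_R)$ with $\|u_0^\varepsilon\|_\infty\le\|u_0\|_\infty$ and $u_0^\varepsilon\to u_0$ in $L^1(B_R)$. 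Classical quasilinear parabolic theory (Ladyzhenskaya–Solonnikov–Ural'tseva) then provides a unique smooth solution $u_\varepsilon$ to the non-degenerate Dirichlet problem with nonlinearity $\Phi_\varepsilon$ and datum $u_0^\varepsilon$.

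Next I would derive uniform bounds. The parabolic maximum principle gives $\|u_\varepsilon\|_{L^\infty(B_R\times(0,\infty))}\le\|u_0\|_{L^\infty(B_R)}$; testing with $\Phi_\varepsilon(u_\varepsilon)$ and integrating yields
\begin{equation*}
\int_0^T\!\!\int_{B_R}\bigl|\nabla\Phi_\varepsilon(u_\varepsilon)\bigr|^2\,d\mu\,dt\le C(T,R,\|u_0\|_\infty)\,,
\end{equation*}
so $\{\Phi_\varepsilon(u_\varepsilon)\}$ is bounded in $L^2(0,T;H^1_0(B_R))$. The equation then bounds $\{\partial_tu_\varepsilon\}$ in $L^2(0,T;H^{-1}(B_R))$, and Aubin–Lions yields a subsequence converging strongly in $L^p(B_R\times(0,T))$ for every $p<\infty$ and a.e.\ to some $u$. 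Continuity of $s\mapsto|s|^{m-1}s$ together with the weak bound then identifies the limit: $|u|^{m-1}u\in L^2(0,T;H^1_0(B_R))$ and $\Phi_\varepsilon(u_\varepsilon)\rightharpoonup|u|^{m-1}u$ weakly in that space. One may now pass to the limit in the weak formulation to obtain a function satisfying Definition \ref{def2}; the $C([0,T];L^1(B_R))$ regularity, and in particular $u(0)=u_0$, follows by interpolating the uniform $L^\infty$ bound against the $H^{-1}$ time-equicontinuity.

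For uniqueness I would invoke the standard duality argument. Given two solutions $u,v$ with the same datum, set $a:=(u^m-v^m)/(u-v)$ where $u\ne v$ and $a:=0$ elsewhere, which is nonnegative and bounded on $B_R\times(0,T)$. For any $\eta\in C^\infty_c(B_R\times(0,T))$ and $\delta>0$, linear parabolic theory on the bounded domain $B_R$ produces a classical solution $\phi$ of the backward problem $\phi_t+(a+\delta)\Delta\phi=\eta$ with $\phi=0$ on $\partial B_R\times(0,T)$ and $\phi(T,\cdot)=0$. Standard energy estimates on $\phi$ yield a uniform-in-$\delta$ bound on $\nabla\phi$ in $L^2$ and on $\sqrt{a+\delta}\,\Delta\phi$ in $L^2$; testing the weak formulation of $(u-v)$ against $\phi$ and sending $\delta\to 0^+$ gives $\iint(u-v)\,\eta\,d\mu\,dt=0$ for every $\eta$, hence $u\equiv v$.

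The main obstacle is the identification of the weak limit $\Phi_\varepsilon(u_\varepsilon)\rightharpoonup|u|^{m-1}u$ in $L^2(0,T;H^1_0(B_R))$: strong $L^p$ compactness of $u_\varepsilon$ combined with the weak $H^1_0$-bound makes this routine, but the combination must be performed with some care. The duality step, though well-established, requires the $\delta$-regularization because $a$ is merely bounded measurable and may degenerate; the genuinely delicate part is the uniform-in-$\delta$ control of $\nabla\phi$ that permits the passage to the limit.
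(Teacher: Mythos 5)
Your overall scheme --- regularize the nonlinearity, derive uniform $L^\infty$ and energy bounds, pass to the limit, and conclude uniqueness by duality --- is the classical argument the paper points to by citing \cite[Section 5]{Vaz07}, so the approach is the right one. However, the compactness step as written would fail. From $\|u_\varepsilon\|_{L^\infty}\le C$ and $\partial_t u_\varepsilon$ bounded in $L^2(0,T;H^{-1}(B_R))$, Aubin--Lions cannot give strong $L^p$ compactness of $u_\varepsilon$: there is no valid Gelfand triple, since $L^\infty(B_R)$ is not compactly embedded in $L^p(B_R)$ and you control only $\nabla\Phi_\varepsilon(u_\varepsilon)$, not $\nabla u_\varepsilon$. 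Nor does Aubin--Lions apply directly to $w_\varepsilon:=\Phi_\varepsilon(u_\varepsilon)$ (bounded in $L^2(0,T;H^1_0(B_R))$), since $\partial_t w_\varepsilon$ is not controlled. The standard fixes are either a time-translation estimate of Alt--Luckhaus type on $w_\varepsilon$ --- derived from the weak formulation together with the monotonicity of $\Phi_\varepsilon$ and then combined with a Fr\'echet--Kolmogorov argument --- or Minty's monotonicity trick, which identifies the weak limit $w_\varepsilon\rightharpoonup u^m$ without invoking strong compactness at all. This is precisely the delicate point you flag in passing, but the sketch does not actually resolve it.

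A second, more minor issue is in the duality argument: with the coefficient $a+\delta$ merely bounded and measurable, non-divergence-form linear parabolic theory does not produce a classical solution $\phi$. You should either mollify $a$ before shifting by $\delta$ --- exactly as the paper does in the proof of Theorem~\ref{thmuniq} --- or work with strong solutions $\phi\in L^2(0,T;H^2\cap H^1_0)\cap H^1(0,T;L^2)$ obtained by Galerkin approximation, for which the crucial energy identity bounding $\sqrt{a+\delta}\,\Delta\phi$ in $L^2$ still holds and suffices for the $\delta\to 0$ passage.
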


\subsection{Construction of the supersolution}
In this section we provide the supersolution $ \overline{u} $ claimed above, whose existence is fundamental in order to prove our existence theorem. To this end, we first need to establish an auxiliary result involving the Laplacian of suitable functions on the kind of manifolds we are interested in.
\begin{lem}\label{prop-comp-ricci}
Let assumption \eqref{H} be satisfied. Then there exists a positive constant $ C^\prime $, depending on $ C_0,\gamma,N $, such that
\begin{equation}\label{eq: m-est-above}
m(\rho,\theta) \le \frac{C^\prime}{\rho\left( 1+\rho \right)^{\sigma-2}} \quad \forall x \equiv (\rho,\theta) \in M \setminus \{ o \} \, .
\end{equation}
\end{lem}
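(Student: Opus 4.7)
The plan is to invoke the Laplacian comparison recalled in \eqref{e3c}--\eqref{e4}: it suffices to exhibit a function $\psi \in \mathcal{A}$ satisfying
\[
(N-1) \, \frac{\psi''(\rho)}{\psi(\rho)} \ge C_0 \left( 1 + \rho^\gamma \right) \quad \textrm{for all } \rho>0 \, ,
\]
for then hypothesis \eqref{H}, together with \eqref{e3c}--\eqref{e4}, yields $m(\rho,\theta) \le (N-1) \psi'(\rho)/\psi(\rho)$; the lemma is thereby reduced to the pointwise estimate $(N-1)\psi'/\psi \le C'/[\rho(1+\rho)^{\sigma-2}]$.

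The ansatz for $\psi$ is dictated by the asymptotic profiles \eqref{e50z}--\eqref{e51z}. For $\gamma \in (-2,2]$ one takes $\psi$ asymptotic to $\exp(c\,\rho^{2-\sigma})$ with $c>0$ chosen so that $c(2-\sigma) = \sqrt{C_0/(N-1)}$; a direct computation gives
\[
\frac{\psi'(\rho)}{\psi(\rho)} = c(2-\sigma)\, \rho^{1-\sigma} , \qquad \frac{\psi''(\rho)}{\psi(\rho)} = c^2(2-\sigma)^2 \rho^{\gamma} + c(2-\sigma)(1-\sigma)\, \rho^{-\sigma} ,
\]
whose leading term at infinity is $[C_0/(N-1)]\rho^\gamma$, since $\gamma > -\sigma$ precisely when $\gamma > -2$. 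Moreover $\psi'/\psi \sim \rho^{1-\sigma}$ matches $1/[\rho(1+\rho)^{\sigma-2}] \asymp \rho^{1-\sigma}$ at infinity. For $\gamma=-2$, one takes $\psi\sim\rho^\delta$ with $\delta$ as in \eqref{e51z}, obtaining $\psi''/\psi = \delta(\delta-1)/\rho^2 = [C_0/(N-1)]\rho^{-2}$ and $\psi'/\psi = \delta/\rho$, consistent with the bound $C'/\rho$ since $\sigma=2$. For $\gamma<-2$ one reduces to the $\gamma=-2$ case via the trivial majorization $1+\rho^\gamma \le 2(1+\rho^{-2})$ valid for $\rho\ge 1$, together with the fact that on compact neighbourhoods of the origin the Ricci curvature is bounded below by continuity.

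The main technical point is to upgrade the asymptotic prescription above to a genuine $\psi\in\mathcal{A}$ for which the curvature inequality holds \emph{globally} on $(0,\infty)$. This requires gluing the chosen asymptotic profile with a model behaving as $\psi(\rho)=\rho$ near the origin so that $\psi(0)=0$ and $\psi'(0)=1$; a convenient intermediate bridge is the hyperbolic-type profile $\sinh(\beta\rho)/\beta$ with $\beta = \sqrt{C_0/(N-1)}$, which satisfies $\psi''/\psi \equiv \beta^2$ and handles the constant term $+1$ in $1+\rho^\gamma$ on bounded intervals; the resulting $\psi$ automatically obeys $\psi'(\rho)/\psi(\rho)\sim 1/\rho$ as $\rho\to 0^+$, which is compatible with the target $C'/\rho$ near the origin. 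Carrying out the gluing with a smooth cut-off while preserving the curvature inequality throughout $(0,\infty)$ is routine but forms the most delicate step of the verification, and is the main obstacle I would expect.
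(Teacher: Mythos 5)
Your approach is the same as the paper's: reduce to constructing $\psi \in \mathcal{A}$ via the Laplacian comparison \eqref{e3c}--\eqref{e4}, so that it suffices to find $\psi$ satisfying $(N-1)\psi''/\psi \ge C_0(1+\rho^\gamma)$ together with $(N-1)\psi'/\psi \le C'/[\rho(1+\rho)^{\sigma-2}]$, then exhibit the asymptotic profile at infinity ($e^{c\rho^{2-\sigma}}$ for $\gamma\in(-2,2]$, $\rho^\delta$ for $\gamma=-2$, essentially linear for $\gamma<-2$) and glue near the origin. These are precisely the two differential inequalities the paper displays, and the paper's own proof is likewise only a sketch that defers the ODE construction to \cite{GMV}, so your level of rigour is comparable.

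That said, your constants are too tight to close the first inequality. Setting $c(2-\sigma)=\sqrt{C_0/(N-1)}$ makes $(N-1)\psi''/\psi$ merely \emph{asymptotic} to $C_0\rho^\gamma$, whereas the inequality needs $(N-1)\psi''/\psi \ge C_0(1+\rho^\gamma)$ with the $+1$; a strictly larger $c$ is required, and one must also watch the sign of the subleading term $c(2-\sigma)(1-\sigma)\rho^{-\sigma}$, which is negative when $\sigma>1$. The same defect affects the bridge: with $\beta=\sqrt{C_0/(N-1)}$ one gets $\psi''/\psi \equiv C_0/(N-1)$, which is \emph{always} strictly below $C_0(1+\rho^\gamma)/(N-1)$, so that profile never verifies the constraint; again $\beta$ must be taken strictly larger, and for $\gamma<0$ even that fails near $\rho=0$ where $\rho^\gamma$ diverges. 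More seriously, for $\gamma\in(-2,0)$ (so $\sigma\in(1,2)$) the ansatz $e^{c\rho^{2-\sigma}}$ yields $\psi''/\psi\to 0$ at infinity while $1+\rho^\gamma\to 1$, so no choice of $c$ makes the first differential inequality hold for large $\rho$. Your sketch does not address this, and the paper itself sidesteps it by deferring to \cite{GMV}; any careful write-up of this lemma would need to resolve exactly this point.
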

\begin{proof}
We give a sketchy proof for the reader's convenience, since this result in fact had already been established in \cite{GMV}. The idea is elementary and relies on the Laplacian comparison inequalities recalled in Section \ref{lapl-comp}: it suffices to provide a function $ \psi \in \mathcal{A} $ which complies with
\begin{equation}\label{eq: m-est-above-diffeq}
C_0 \left(1+\rho^\gamma \right) \leq (N-1) \, \frac{\psi''(\rho)}{\psi(\rho)} \quad \forall \rho>0
\end{equation}
and satisfies
\begin{equation}\label{eq: m-est-above-diffeq-2}
(N-1) \, \frac{\psi'(\rho)}{\psi(\rho)} \le \frac{C^\prime}{\rho\left( 1+\rho \right)^{\sigma-2}} \quad \forall \rho>0 \, .
\end{equation}
In general, the differential equation associated with \eqref{eq: m-est-above-diffeq} does not admit explicit solutions. Nevertheless, by means of standard ODE techniques, it is not difficult to show that there exists indeed some $ \psi \in \mathcal{A} $ fulfilling \eqref{eq: m-est-above-diffeq}--\eqref{eq: m-est-above-diffeq-2}. However, it is necessary to distinguish carefully between the cases $ \gamma \in (-2,2] $, $ \gamma=-2 $ and $ \gamma < -2 $. In the first case, one can proceed as in \cite[Lemma 4.2]{GMV}: it is also possible to show that the associated $ \psi $ behaves like \eqref{e50z}. In the second case we refer to \cite[Section 8.1]{GMV}: the associated $ \psi $ behaves like \eqref{e51z}. Finally, in the third case one argue as in \cite[Section 8.2]{GMV}, and the associated $ \psi $ behaves like $ \rho $ up to multiplicative constants.
\end{proof}

We are now ready to exhibit the required supersolution.

\begin{pro}\label{supersol} Let assumption \eqref{H} be satisfied with $\gamma \in (-\infty,2)$. Given any $T,a>0$ and $ r \ge 1 $, set
\begin{equation*}\label{t1c}
\overline{u}(x,t):=\left( 1 - \frac{t}{T} \right)^{-\frac{1}{m-1}} W_{T,r}(x) \quad \forall (x,t) \in M \times [0,T) \, ,
\end{equation*}
where
\begin{equation}\label{eq: expl-subsol}
W_{T,r}(x) := \frac{a \left( r^2+ \rho(x)^2 \right)^{\frac{\sigma}{2(m-1)}}}{T^{\frac{1}{m-1}}} \quad \forall x \in M \, .
\end{equation}
Then there exists $ a=a(C_0,\gamma,N,m)>0 $ such that
\begin{equation}\label{t1a}
\overline{u}_t \geq \Delta \, \overline{u}^m \quad \textrm{in } M\times (0, T) \, .
\end{equation}
\end{pro}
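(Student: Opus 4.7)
\smallskip

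\noindent\textbf{Proof plan for Proposition \ref{supersol}.}

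The plan is to compute $\overline{u}_t-\Delta\overline{u}^m$ directly, cancel the time-dependent factor, and reduce the supersolution inequality to a pointwise estimate for $\Delta W_{T,r}^m$ on $M$. Since both sides of \eqref{t1a} factor the same power of $(1-t/T)$, a short calculation yields that \eqref{t1a} is equivalent to the time-independent inequality
\begin{equation*}
\frac{W_{T,r}(x)}{(m-1)\,T}\;\ge\;\Delta W_{T,r}^m(x)\qquad\text{for a.e.\ } x\in M,
\end{equation*}
which, after plugging in \eqref{eq: expl-subsol} and writing $\beta:=\tfrac{m\sigma}{2(m-1)}$, becomes
\begin{equation*}
\frac{(r^2+\rho^2)^{\frac{\sigma}{2(m-1)}}}{m-1}\;\ge\; a^{m-1}\,\Delta\bigl(r^2+\rho^2\bigr)^{\beta}.
\end{equation*}

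Next, because $W_{T,r}^m$ is radial, the angular part of the Laplacian \eqref{e1} contributes nothing and
\begin{equation*}
\Delta(r^2+\rho^2)^{\beta}\,=\,2\beta(r^2+\rho^2)^{\beta-2}\bigl[r^2+(2\beta-1)\rho^2\bigr]\,+\,2\beta\,m(\rho,\theta)\,\rho\,(r^2+\rho^2)^{\beta-1}.
\end{equation*}
I would estimate the two summands separately. For the first one, I bound $r^2+(2\beta-1)\rho^2\le(1\vee(2\beta-1))(r^2+\rho^2)$, so it is controlled by a multiple of $(r^2+\rho^2)^{\beta-1}$. For the second one, I apply Lemma \ref{prop-comp-ricci}, which gives $m(\rho,\theta)\,\rho\le C'(1+\rho)^{2-\sigma}$, yielding a bound of the form $C''\,(1+\rho)^{2-\sigma}(r^2+\rho^2)^{\beta-1}$.

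The key algebraic point is that
\begin{equation*}
\beta-1-\frac{\sigma}{2(m-1)}\,=\,\frac{\sigma}{2}-1\,=\,-\frac{2-\sigma}{2}\,\le\,0,
\end{equation*}
so both error factors can be absorbed into $(r^2+\rho^2)^{\sigma/(2(m-1))}$. Specifically, using $r\ge1$ one has $(1+\rho)^{2}\le 2(r^2+\rho^2)$, hence
\begin{equation*}
(1+\rho)^{2-\sigma}(r^2+\rho^2)^{\beta-1}\,=\,(r^2+\rho^2)^{\frac{\sigma}{2(m-1)}}\Bigl[\tfrac{(1+\rho)^2}{r^2+\rho^2}\Bigr]^{\!\frac{2-\sigma}{2}}\!\le\,2^{\frac{2-\sigma}{2}}(r^2+\rho^2)^{\frac{\sigma}{2(m-1)}},
\end{equation*}
and similarly for the first summand (which does not even need the inequality $(1+\rho)^2\le 2(r^2+\rho^2)$, only $r\ge 1$). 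Putting these together produces a constant $C_2=C_2(C_0,\gamma,N,m)$ with
\begin{equation*}
\Delta(r^2+\rho^2)^{\beta}\,\le\,C_2\,(r^2+\rho^2)^{\frac{\sigma}{2(m-1)}}\qquad\text{for all } x\in M\setminus\{o\}.
\end{equation*}
The desired supersolution inequality then reduces to the scale-invariant condition $a^{m-1}C_2(m-1)\le 1$, so it suffices to choose $a:=[C_2(m-1)]^{-1/(m-1)}$, independent of $T$ and $r$. The origin $x=o$ is handled by continuity since $\overline{u}^m$ is at least $C^2$ there (as $\beta m\ge 1$ after noting that $W_{T,r}^m(x)=a^m T^{-m/(m-1)}(r^2+\rho^2)^{m\sigma/(2(m-1))}$ is smooth in $\rho^2$, hence smooth on $M$).

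The main obstacle is the uniformity of $a$ in $r,T$: it is essential that the two error factors $(r^2+\rho^2)^{\beta-1-\sigma/(2(m-1))}$ and $(1+\rho)^{2-\sigma}/(r^2+\rho^2)^{(2-\sigma)/2}$ be bounded by absolute constants, and this uses exactly the hypothesis $r\ge1$ together with the identification $\sigma/2=1-\gamma$-driven exponent matching. Once this matching is verified, the rest is a direct computation.
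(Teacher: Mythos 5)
Your proposal is correct and follows the same route as the paper: reduce \eqref{t1a} to the time-independent inequality $W_{T,r}\ge (m-1)T\,\Delta(W_{T,r}^m)$, compute the radial Laplacian, invoke Lemma \ref{prop-comp-ricci} for the upper bound on $m(\rho,\theta)$, and use $r\ge1$ together with the exponent identity $\beta-1-\tfrac{\sigma}{2(m-1)}=-\tfrac{2-\sigma}{2}$ to absorb the error factors into $(r^2+\rho^2)^{\sigma/(2(m-1))}$ and choose $a$ uniformly. The only cosmetic difference is that the paper splits the estimate between $B_1$ and $B_1^c$, whereas you handle both regions at once via $(1+\rho)^2\le 2(r^2+\rho^2)$; both are equally valid (and the parenthetical remark about ``$\beta m\ge1$'' at the origin is dispensable, since smoothness of $(r^2+\rho^2)^\beta$ in $\rho^2$ with $r>0$ already settles the point).
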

\begin{proof}
Let
\begin{equation}\label{mm18}
W(x)\equiv W(\rho(x)):= a \left( r^2+ \rho(x)^2 \right)^{\frac{\sigma}{2(m-1)}}\quad \forall x\equiv (\rho, \theta)\in M \, .
\end{equation}
We have:
\begin{equation*}
\left(W^m\right)^\prime(\rho)=a^m \, \frac{\sigma m}{m-1} \, \rho \left(r^2 + \rho^2\right)^{\frac{\sigma m}{2(m-1)}-1} \, ,
\end{equation*}
\begin{equation}\label{mm7}
\left(W^m\right)''(\rho) = a^m \, \frac{\sigma m}{m-1} \left( r^2 + \rho^2 \right)^{\frac{\sigma m}{2(m-1)}-1} + a^m \, \frac{\sigma m}{m-1} \left(\frac{\sigma m}{m-1} -2 \right) \rho^2 \left(r^2 + \rho^2\right)^{\frac{\sigma m}{2(m-1)}-2} \, .
\end{equation}
We claim that we can select $a>0$, only depending on $ \sigma$, $m $ and on the constant $ C^\prime $ of Lemma \ref{prop-comp-ricci}, in such a way that
\begin{equation}\label{t1}
W \geq (m-1) \, \Delta\left(W^m\right) \quad \textrm{in } M \, .
\end{equation}
Indeed, in view of \eqref{e1} and \eqref{mm7}, we can infer that
\begin{equation*}\label{t1b}
(m-1)\,\Delta\left(W^m\right) = a^m \, \sigma m \left(r^2+\rho^2\right)^{\frac{\sigma m}{2(m-1)}-1}\left[ 1 + \left(\frac{\sigma m}{m-1} -2 \right) \frac{\rho^2}{r^2+\rho^2} + \rho \, m(\rho, \theta) \right] .
\end{equation*}
In particular, due to \eqref{eq: m-est-above}, we have that
\begin{equation}\label{t2}
(m-1) \, \Delta\left(W^m\right) \leq a^m \, \sigma m \left(r^2+\rho(x)^2\right)^{\frac{\sigma m}{2(m-1)}-1}\left(1+ \left|\frac{\sigma m}{m-1} -2\right| + 2^{2-\sigma} \, C' \right) \quad \textrm{in } B_1
\end{equation}
and
\begin{equation}\label{t3}
(m-1) \, \Delta\left(W^m\right) \leq a^m \, \sigma m \left(r^2+\rho(x)^2\right)^{\frac{\sigma m}{2(m-1)}-1} \! \left[ 1+ \left|\frac{\sigma m}{m-1} -2\right| + C' \left(1+\rho(x)\right)^{2-\sigma} \right] \ \ \textrm{in } B_1^c  \, .
\end{equation}
Thanks to \eqref{t2}--\eqref{t3} and to the explicit expression \eqref{mm18} of $W$, it is direct to check that \eqref{t1} is fulfilled provided
\begin{equation}\label{t4}
a^{m-1} \, \sigma m \left(1+ \left|\frac{\sigma m}{m-1} -2\right| + 2^{2-\sigma} \, C' \right) \leq \left(r^2+\rho^2\right)^{\frac{2-\sigma}2}  \quad \forall \rho\in (0, 1]
\end{equation}
and
\begin{equation}\label{t4a}
a^{m-1} \, \sigma m \left[ 1+  \left|\frac{\sigma m}{m-1} -2\right| + C' \left(1+\rho\right)^{2-\sigma} \right] \leq \left(r^2+\rho^2\right)^{\frac{2-\sigma}2} \quad \forall  \rho \ge 1 \, .
\end{equation}
If $ r $ ranges in the interval $ [1,\infty) $, then \eqref{t4}--\eqref{t4a} are fulfilled e.g.~by the choice
$$ a = \left[ \sigma m \left(1+ \left|\frac{\sigma m}{m-1} -2\right| + 2^{2-\sigma} \, C' \right) \right]^{-\frac{1}{m-1}} . $$
Inequality \eqref{t1} has therefore been established.
Besides, one sees that $W_{T,r} $, defined by \eqref{eq: expl-subsol}, satisfies
\[
W_{T,r} \geq (m-1)T \, \Delta\!\left( W_{T,r}^m \right) \quad \textrm{in } M \, ,
\]
which is equivalent to the fact that the separable profile $ \overline{u} $ satisfies \eqref{t1a}.
\end{proof}

As an immediate consequence of \eqref{e40-bis} and \eqref{eq: expl-subsol}, we have the following.
\begin{cor}\label{lem: supersol-datum}
Let $ u_0 \in X_{\infty,\sigma} $. Under the same assumptions and with the same notations as in Proposition \ref{supersol}, there holds
\begin{equation}\label{eq-sopra-dato}
\left|u_0(x)\right| \le W_{T,r}(x) \quad \textrm{for a.e. } x \in M
\end{equation}
with the choice
\begin{equation}\label{eq-sopra-T}
T= a^{m-1} \left\| u_0 \right\|_{\infty,r}^{1-m} \, .
\end{equation}
\end{cor}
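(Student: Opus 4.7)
The proof is essentially a one-line check comparing the explicit form of $W_{T,r}$ in \eqref{eq: expl-subsol} with the pointwise bound on $u_0$ furnished by its norm in $X_{\infty,\sigma}$, so the plan is simply to match the two prefactors by solving for $T$.

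First I would recall that, since $u_0 \in X_{\infty,\sigma}$, inequality \eqref{e40-bis} applied to $f = u_0$ yields
\[
|u_0(x)| \le \|u_0\|_{\infty,r}\left(r^2 + \rho(x)^2\right)^{\frac{\sigma}{2(m-1)}} \quad \textrm{for a.e. } x \in M.
\]
On the other hand, the definition \eqref{eq: expl-subsol} gives
\[
W_{T,r}(x) = \frac{a}{T^{\frac{1}{m-1}}}\left(r^2 + \rho(x)^2\right)^{\frac{\sigma}{2(m-1)}}.
\]
Therefore the desired inequality \eqref{eq-sopra-dato} is equivalent to
\[
\|u_0\|_{\infty,r} \le \frac{a}{T^{\frac{1}{m-1}}},
\]
which, upon raising both sides to the power $m-1>0$ and rearranging, is nothing but $T \le a^{m-1}\|u_0\|_{\infty,r}^{1-m}$.

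Hence the choice \eqref{eq-sopra-T}, namely $T = a^{m-1}\|u_0\|_{\infty,r}^{1-m}$, makes the two prefactors coincide and delivers \eqref{eq-sopra-dato} as an equality of upper bounds. There is no genuine obstacle here; the only implicit point worth flagging is that $\|u_0\|_{\infty,r} < \infty$ (guaranteed by $u_0 \in X_{\infty,\sigma}$ together with $r \ge 1$), so that $T$ as defined in \eqref{eq-sopra-T} is a well-defined positive number, and that the constant $a=a(C_0,\gamma,N,m)$ is the same one produced in Proposition \ref{supersol}, so that $\overline{u}(x,t) = (1 - t/T)^{-1/(m-1)} W_{T,r}(x)$ is genuinely a supersolution on $M \times (0,T)$ lying above $|u_0|$ at $t=0$.
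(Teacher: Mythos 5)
Your argument is exactly what the paper has in mind: the authors explicitly introduce this corollary as ``an immediate consequence of \eqref{e40-bis} and \eqref{eq: expl-subsol},'' and your computation simply matches the prefactor $\|u_0\|_{\infty,r}$ from \eqref{e40-bis} against $a\,T^{-1/(m-1)}$ from \eqref{eq: expl-subsol} and solves for $T$. This is correct and coincides with the intended (unwritten) proof.
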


\subsection{Proof the existence result}\label{sect: proofExi}
By means of the approximating problems \eqref{q10}, taking advantage of Proposition \ref{supersol} and the consequent Corollary \ref{lem: supersol-datum}, we are now able to prove Theorem \ref{thmexi}.
\begin{proof}[Proof of Theorem \ref{thmexi}]
For every $R>0$, let $u_R$ be the weak solution of problem \eqref{q10}, in the sense of Definition \ref{def2}. Clearly $u_R$ is also a very weak solution, namely
\begin{equation}\label{t10}
- \int_0^\infty \int_{B_R} u_R \, \phi_t \,  d\mu dt \,=\, \int_0^\infty \int_{B_R} u_R^m  \, \Delta \phi \, d\mu dt + \int_{B_R} u_0(x) \, \phi(x,0) \, d\mu
\end{equation}
for all $\phi\in C^\infty_c(B_R\times [0, \infty))$. So, let $ \overline{u} $ be the supersolution provided by Proposition \ref{supersol}, with $T$ chosen as in \eqref{eq-sopra-T}. Since $ \overline{u}>0 $, in view of \eqref{t1a} and \eqref{eq-sopra-dato} we can apply the standard comparison principle to get
\begin{equation}\label{t10-comp}
u_R \le \overline{u} = \left( 1 - \frac{t}{T} \right)^{-\frac{1}{m-1}} \left\| u_0 \right\|_{r,\infty} \left( r^2+ \rho(x)^2 \right)^{\frac{\sigma}{2(m-1)}} \quad \textrm{in } B_R \times (0,T) \, .
\end{equation}
On the other hand, because $ -\overline{u}^m = \left( - \overline{u} \right)^m $, there holds
\begin{equation*}\label{t1a-minus}
\left( -\overline{u} \right)_t \leq \Delta \left( -\overline{u} \right)^m \quad \textrm{in } M\times (0, T) \, .
\end{equation*}
Hence, an analogous application of the comparison principle on balls yields
\begin{equation}\label{t10-comp-minus}
- \overline{u} = - \left( 1 - \frac{t}{T} \right)^{-\frac{1}{m-1}} \left\| u_0 \right\|_{r,\infty} \left( r^2+ \rho(x)^2 \right)^{\frac{\sigma}{2(m-1)}} \le u_R  \quad \textrm{in } B_R \times (0,T) \, .
\end{equation}
As a consequence of \eqref{t10-comp} and \eqref{t10-comp-minus}, we have that
\begin{equation}\label{t10-abs}
\left| u_R(x,t) \right| \le \left( 1 - \frac{t}{T} \right)^{-\frac{1}{m-1}} \left\| u_0 \right\|_{r,\infty} \left( r^2+ \rho(x)^2 \right)^{\frac{\sigma}{2(m-1)}} \quad \textrm{for a.e. } (x,t) \in B_R \times (0,T) \, .
\end{equation}
Thanks to estimate \eqref{t10-abs}, a standard diagonal argument ensures that, up to subsequences, $ u_R $ (set to be zero in $ B_R^c \times (0,T) $) converges weakly$^\ast$ in $ L^\infty_{\rm loc}(M\times(0,T)) $ as $ R \to \infty $ to some function $ u \in L^\infty_{\rm loc}(M\times(0,T)) $. This is enough in order to pass to the limit in \eqref{t10} to infer that $u$ is a solution of \eqref{e64}, in the sense of Definition \ref{defsol}, satisfying \eqref{thm-limit-abs}. Finally, estimate \eqref{thm-norms} is a consequence of \eqref{thm-limit-abs} and \eqref{e40-norm}, whereas \eqref{thm-limsup} and \eqref{thm-tmax} follow from \eqref{thm-norms} and \eqref{n2}, respectively, upon letting $ r \to \infty $ and using \eqref{e40-limsup-final}. The last statement concerning initial data such that $ \lim_{\rho(x)\to\infty} \rho(x)^{-\frac{\sigma}{m-1}} \, | u_0(x) | = 0 $ is then a corollary of \eqref{thm-tmax}.
\end{proof}
%

\section{Uniqueness: proofs}\label{uniqueness}
Before giving the detailed proof of Theorem \ref{thmuniq}, let us describe from a general point of view its strategy. The latter is based on a so-called ``duality method'', which consists in choosing a suitable family of test functions in  the weak formulation of problem \eqref{e64}. In particular, for every $\varepsilon>0$ and $ n\in \mathbb N$, we shall take $ \phi \equiv \phi_\varepsilon \xi_n$, where $\{\phi_\varepsilon\}$ is a family of suitable cut-off functions and $\{\xi_n\}$ is a sequence of solutions of appropriate (dual) backward parabolic problems with unbounded coefficients $a_n$ which are associated with the difference of two possibly different solutions. Then, by letting $\varepsilon \to 0$, $n\to \infty$ and using a priori estimates on the test functions, we end up with
\begin{equation}\label{q11}
\int_M [u(T) - v(T)] \, \omega \, d\mu = 0
\end{equation}
for any $\omega \in C^\infty_c(M)$ with $ \omega\geq 0$ and all $T>0$ small enough. This easily implies that $u(t) \equiv v(t)$ for almost every $ t \in (0,T) $. In order to remove the constraint that $ T $ is suitably small, it is enough to perform a finite-step iteration.

We stress that the above strategy deeply relies on decay estimates for
\begin{equation}\label{eq: est-norm}
\sup_{S_R\times(0, T)} \left|\frac{\partial \xi_n}{\partial \nu}\right| ,
\end{equation}
where $\frac{\partial}{\partial \nu}$ denotes the outward normal derivative on $S_R$. To this aim, we shall construct a suitable supersolution $\eta$ (see \eqref{n21}).

\medskip

\begin{proof}[Proof of Theorem \ref{thmuniq}] Since both $u$ and $v$ satisfy \eqref{q50}, we get
\begin{equation}\label{n3}
\int_0^T \int_M \left[ (u - v) \, \xi_t + (u^m - v^m) \, \Delta \xi \right] d\mu dt\,=\, \int_M \left[ u(T) - v(T) \right]\xi(T) d\mu
\end{equation}
for any $\xi\in C^\infty_c(M\times [0, T])\,.$ Let
\begin{equation}\label{n4}
a(x,t):=
\begin{cases}
\frac{u^m(x,t) - v^m(x,t)}{u(x,t)- v(x,t)} & \textrm{if } u(x,t)\neq v(x,t) \, , \\
0 & \textrm{if } u(x,t)=v(x,t) \, .
\end{cases}
\end{equation}
It is easily seen that, thanks to \eqref{e20z}, there exists a constant $C_1>0$ such that
\begin{equation}\label{n5}
0\leq  a(x,t) \, \leq \, C_1 \, (1 + \rho(x))^{\sigma}\quad \textrm{for a.e. } (x,t)\in M\times (0, T)\,.
\end{equation}
Due to \eqref{n4}, identity \eqref{n3} reads
\begin{equation*}
\int_0^T \int_M (u - v) (\xi_t + a \, \Delta \xi)\, d\mu dt \,=\, \int_M [u(T)- v(T)] \, \xi(T) \, d\mu \, .
\end{equation*}
Now let $R_0>0$ and $R>R_0+1$. In the sequel, $R_0$ is meant to be fixed once for all, while $R$ will eventually go to infinity. Let
\begin{equation*}
\{a_{n, R}\}_{n\in \mathbb N} \equiv  \{a_n\}_{n\in \mathbb N}\subset C^\infty(M\times [0, T])\,,\quad a_n>0 \ \, \textrm{in}\ B_R \, , \quad \textrm{for any}\ n\in \mathbb N\,;
\end{equation*}
\begin{equation}\label{n8}
\omega\in C^\infty_c(M) \, , \quad 0 \leq \omega\leq 1 \, , \quad \omega(x)=0 \ \, \textrm{for all } \rho(x)\geq R_0\,.
\end{equation}
For every $n\in \mathbb N$, let $\xi_n$ be the solution of the backward parabolic problem
\begin{equation}\label{n9}
\begin{cases}
\partial_t \xi_n  + a_n \, \Delta \xi_n = 0 & \textrm{in } B_R \times (0, T) \, , \\
\xi_n = 0 & \textrm{on } \partial B_R\times (0, T) \, , \\
\xi_n = \omega & \textrm{in } B_R \times \{T\} \, .
\end{cases}
\end{equation}
Clearly there holds
\begin{equation}\label{n43}
\frac{\partial \xi_n}{\partial \nu}  \leq 0 \quad \textrm{on } \partial B_R\times (0, T)\,.
\end{equation}
For all $0<\varepsilon<\frac 1 2$ we can pick a family of nonincreasing cut-off functions $ \tilde \phi_\varepsilon $ such that
\begin{equation*}
\begin{aligned}
& \tilde \phi_\varepsilon\in C^\infty_c([0,\infty))\,,\quad 0\leq \tilde\phi_\varepsilon\leq 1\, , \\
& \tilde \phi_\varepsilon = 1\quad \textrm{in}\ [0, R-2\varepsilon)\,,\quad \tilde \phi_\varepsilon = 0 \quad \textrm{in}\ (R-\varepsilon, \infty)\,, \\
& |\tilde \phi'_\varepsilon|\, \leq\, \frac C{\varepsilon} \, \chi_{[R-2\varepsilon, R-\varepsilon]}\,,\quad |\tilde \phi'' |\,\leq\, \frac{C}{\varepsilon^2}\,\chi_{[R-2\varepsilon, R-\varepsilon]} \, ,
\end{aligned}
\end{equation*}
for some constant $C>0$ independent of $\varepsilon$. Set
\begin{equation*}
\phi_\varepsilon(x):=\tilde \phi_\varepsilon(\rho(x))\quad \textrm{for all}\ x\in M\,.
\end{equation*}
In particular, there holds
\begin{equation}\label{n12}
|\nabla \phi_\varepsilon| \leq \frac{C}{\varepsilon} \, \chi_{B_{R-\varepsilon}\setminus B_{R-2\varepsilon}}   \quad \textrm{in}\;\; M\,;
\end{equation}
furthermore, since $0\leq \sigma \leq 2$, from \eqref{e1}, \eqref{e5} and \eqref{eq: m-est-above} we deduce that
\begin{equation}\label{n13}
|\Delta \phi_\varepsilon|\,  \leq \frac{C}{\varepsilon^2} \, \chi_{B_{R-\varepsilon}\setminus B_{R-2\varepsilon}}\left(1+ R^{1-\sigma}\right)   \quad \textrm{in}\ M\,,
\end{equation}
for some other constant $C>0$ independent of $R$ and $\varepsilon$ (that shall not be relabelled below).

So, by plugging $\xi \equiv \phi_\varepsilon \xi_n$ in \eqref{n3}, we obtain:
\begin{equation}\label{n14}
\begin{aligned}
& \int_0^T \int_M (u - v) \phi_\varepsilon (a- a_n) \, \Delta \xi_n \, d\mu dt \,+ \, \int_0^T \int_M (u^m - v^m) (2\langle \nabla \phi_\varepsilon, \nabla \xi_n\rangle + \xi_n \, \Delta \phi_\varepsilon) \, d\mu dt \\
= &  \int_M [u(T)- v(T)] \, \omega \phi_\varepsilon \, d\mu\,.
\end{aligned}
\end{equation}
Let us set
\begin{equation}\label{n15}
I_{n \varepsilon}:= \int_0^T \int_M (u-v) \phi_\varepsilon (a- a_n) \, \Delta\xi_n \, d\mu dt\,,
\end{equation}
\begin{equation}\label{n16}
J_{n \varepsilon}:= \int_0^T \int_M (u^m - v^m) (2\langle \nabla \phi_\varepsilon, \nabla \xi_n\rangle + \xi_n \, \Delta \phi_\varepsilon) \, d\mu dt\,.
\end{equation}
In view of \eqref{n12}--\eqref{n13} we get
\begin{equation*}
|J_{n\varepsilon}| \leq C\left(1+ R^{1-\sigma}\right) \int_0^T \int_{B_R\setminus B_{R-2\varepsilon}} \left|u^m- v^m\right| \left(\frac{|\nabla \xi_n|}{\varepsilon}+\frac{|\xi_n|}{\varepsilon^2} \right) d\mu dt\,.
\end{equation*}
Since $\xi_n=0$ in $\partial B_R\times (0, T)$, we have that
\[ \sup_{ \substack{ R-2\varepsilon<\rho(x)<R \\  0<t<T} } |\xi_n(x,t)|\, \leq\, 2\varepsilon \sup_{ \substack{ R-2\varepsilon<\rho(x)<R \\ 0<t<T}}|\nabla \xi_n(x,t)|
\]
and
\[\lim_{\varepsilon \to 0} \, \sup_{ \substack{R-2\varepsilon<\rho(x)<R \\ 0<t<T}}|\nabla \xi_n(x,t)| = \sup_{ \substack{ \rho(x)=R \\ 0<t<T}} |\nabla \xi_n(x,t)| = \sup_{ \substack{ \rho(x)=R \\ 0<t<T }} \left|\frac{\partial \xi_n}{\partial \nu}(x,t)\right|.
\]
Hence,
\begin{equation}\label{n50}
\begin{aligned}
J_n := & \limsup_{\varepsilon\to 0} \left| J_{n \varepsilon} \right| \\
\leq & C \, T \left(1+ R^{1-\sigma}\right) \textrm{meas}(S_R)\, \sup_{S_R\times(0, T)} \left|\frac{\partial \xi_n}{\partial \nu}\right|  \limsup_{\varepsilon\to 0}
\sup_{ \substack{R-2\varepsilon<\rho(x)<R \\ 0<t<T}} \left|u^m(x,t)-v^m(x,t)\right| .
\end{aligned}
\end{equation}
By exploiting \eqref{e20z} both for $u$ and $v$, we infer
\begin{equation}\label{n50aa}
J_n \leq C \, T \left(1+ R^{1-\sigma}\right) \textrm{meas}(S_R)\, R^{\frac{\sigma m}{m-1}} \sup_{S_R\times(0, T)} \left|\frac{\partial \xi_n}{\partial \nu}\right| .
\end{equation}
On the other hand, for every $\varepsilon>0$ there holds
\begin{equation}\label{n19}
\left|I_{n \varepsilon}\right|\leq I_n:= \left(\int_0^T \int_{B_R} |u - v|^2 \, \frac{(a - a_n)^2}{a_n} \, d\mu dt \right)^{\frac 1 2} \left(\int_0^T \int_{B_R} a_n \, (\Delta \xi_n)^2 \, d\mu dt \right)^{\frac 1 2}\,.
\end{equation}

Now we need to estimate \eqref{eq: est-norm}. To this end, suppose that for all $R>R_0+1$ there exists $n_0=n_0(R)$ such that for every $n\in \mathbb N , n>n_0$
\begin{equation}\label{n20}
a_n \leq C_2 \, (1+ \rho(x))^{{\sigma}}\quad \textrm{for all}\ (x,t)\in B_R\times (0, T)\,,
\end{equation}
for some $C_2>0$ independent of $n$ and $R$. We shall comment later on such assumption.

\medskip

\noindent (a) Consider first the case where $-2<\gamma\leq 2$, so that $0\leq \sigma<2$. Take any $\lambda>0$, $K>0$, $0<t_0<\frac T4$, $\rho_0>1$ and set
\begin{equation}\label{n21}
\eta(x,t)\equiv \eta(\rho(x), t):= \lambda \, e^{\frac{K}{t-T-t_0}(\rho(x) + \rho_0)^{2-\sigma}}\quad \textrm{for all}\ (x,t)\in B_{R_0}^c\times [0, T]\,.
\end{equation}

We claim that, for a suitable choice of the parameters $K$ and $\rho_0$, the function $\eta$ satisfies
\begin{equation}\label{n33}
\eta_t \,+\, a_n \, \Delta \eta \leq 0 \quad \textrm{in}\  (B_{R} \setminus \bar B_{R_0}) \times (0, T)\,.
\end{equation}
In fact, for every $\rho>1$ and $ 0<t<T$ we have
\begin{equation}\label{n22}
\eta_t(\rho, t)=-\frac{K (\rho+\rho_0)^{2-\sigma}}{(t-T-t_0)^2} \, \eta(\rho, t) < 0\,,
\end{equation}
\begin{equation*}
\eta_\rho(\rho, t)=\frac{  K (2-\sigma) (\rho+\rho_0)^{1-\sigma}}{t-T-t_0}\,\eta(\rho, t) < 0
\end{equation*}
and
\begin{equation*}
\eta_{\rho\rho}(\rho, t)=\left[\frac{ K^2 (2-\sigma)^2 (\rho+\rho_0)^{2(1-\sigma)}}{(t-T-t_0)^2} + \frac{ K (2-\sigma)(1-\sigma)(\rho+\rho_0)^{-\sigma}}{t-T-t_0}\right]\eta(\rho, t)\,.
\end{equation*}
Let $\mathcal D:= \{(x,t)\in (B_R \setminus \bar B_{R_0})\times (0, T) : \, \Delta \eta(x,t)<0\}$. Thanks to \eqref{n22} and \eqref{n5} it follows that
\begin{equation*}
\eta_t + a_n \, \Delta \eta \leq 0 \quad \textrm{in}\ \mathcal D\,.
\end{equation*}
In the region $\{ (x,t)\in [ (B_R \setminus \bar B_{R_0}) \times (0, T)] \} \setminus \mathcal D$, in view of \eqref{e1}, \eqref{e5} and \eqref{n20}, there holds
\begin{equation}\label{n26}
\begin{aligned}
& \eta_t(\rho, t) + a_n\,\Delta \eta(\rho, t) \\
\leq & \eta_t(\rho, t) + C_2 (\rho_0+\rho)^{{\sigma}}\Delta \eta(\rho,t) \\
\leq & \frac{\eta(\rho, t)(\rho+\rho_0)^{2-\sigma}}{(t-T-t_0)^2} \, [ -K +K^2 C_2 (2-\sigma)^2 + K C_2(2-\sigma)(1-\sigma)(t-T-t_0)(\rho+\rho_0)^{\sigma-2} \\
&  + K C_2 (2-\sigma)(t-T-t_0)(\rho+\rho_0)^{-1+\sigma} \, m(\rho, \theta) ] \\
\leq & \frac{\eta(\rho, t)(\rho+\rho_0)^{2-\sigma}}{(t-T-t_0)^2} \, [-K + K^2 C_2 (2-\sigma)^2 + K C_2 (2-\sigma)(1-\sigma)(t-T-t_0)(\rho+\rho_0)^{\sigma-2} ] \, .
\end{aligned}
\end{equation}
If $\sigma \in [0,1]$, then from \eqref{n26} we deduce that
\begin{equation}\label{n30}
\eta_t + a_n \, \Delta \eta \leq 0 \quad \textrm{in } \big[ (B_R \setminus \bar B_{R_0}) \times (0, T) \big] \setminus \mathcal D
\end{equation}
provided
\begin{equation*}
0< K < \frac{1}{C_2(2-\sigma)^2}\,.
\end{equation*}
On the other hand, if $1<\sigma<2,$ then \eqref{n30} is under the condition
\begin{equation*}
K < \frac{1}{2 C_2(2-\sigma)^2}\,,\quad \rho_0>\big[ 2 (2-\sigma)(\sigma-1)(T+t_0) C_2\big] ^{\frac 1{2-\sigma}}\,.
\end{equation*}
Thus the claim has been shown.

By selecting
\[
\lambda \geq \| \omega\|_\infty \, e^{\frac K{t_0}(R_0+\rho_0)^{2-\sigma}}
\]
we obtain that
\begin{equation}\label{n33-bis}
\eta \geq \| \omega\|_\infty \, \chi_{B_{R_0}} \geq \omega \quad \textrm{in } [\bar B_{R}\times \{T\}] \cup[\partial B_{R_0}\times (0, T)]\,.
\end{equation}
Hence, in view of \eqref{n33} and \eqref{n33-bis}, $\eta$ is a supersolution of problem (note that $ \xi_n \le \| \omega \|_\infty $)
\begin{equation}\label{n9b}
\begin{cases}
v_t  + a_n \, \Delta v = 0 & \textrm{in }\ (B_R \setminus \bar B_{R_0}) \times (0, T) \, , \\
v = 0 & \textrm{on } \partial B_R\times (0, T) \, , \\
v = \xi_n & \textrm{on } \partial B_{R_0} \times (0, T) \, ,\\
v = 0 & \textrm{in } [ \bar B_R\setminus B_{R_0}] \times \{T\} \,.
\end{cases}
\end{equation}
By the comparison principle, we infer that for every $n\in \mathbb N$
\begin{equation}\label{n38}
\eta \geq \xi_n\quad \textrm{in } [B_R\setminus B_{R_0}]\times (0, T)\,.
\end{equation}
Let us choose $k_1>0 $ and $ k_2<0$ such that
\begin{equation}\label{n34}
\begin{cases}
\frac{k_1}{(R-1)^{N-2}} + k_2 = \eta (R-1, 0) \, ,  & 
\\
\frac{k_1}{R^{N-2}}+k_2=0 \, ,  & 
\end{cases}
\end{equation}
namely
\[
k_1 = \frac{R^{N-2}(R-1)^{N-2}}{R^{N-2}- (R-1)^{N-2}} \, \eta(R-1, 0)\,, \quad k_2 = - \frac{(R-1)^{N-2}}{R^{N-2}- (R-1)^{N-2}} \, \eta(R-1, 0) \, .
\]

Assume for the moment that $N\geq 3$. Let
\begin{equation}\label{e1lu}
h(x)\equiv h(\rho(x)):= \frac{k_1}{\rho(x)^{N-2}} + k_2\quad \textrm{for all } x \in \bar B_R\setminus B_{R-1}\,.
\end{equation}
In view of \eqref{n34}, we have that
\begin{equation}\label{n35}
h = 0 = \xi_n \quad \textrm{in}\ \partial B_R\times (0, T]\,.
\end{equation}
By \eqref{n22} and \eqref{n38}--\eqref{n34}, there holds
\begin{equation*}\label{n36}
h(x)=\eta(R-1, 0)\geq \eta(R-1, t) \geq \xi_n\quad \textrm{for all } (x,t)\in \partial B_{R-1}\times (0, T]\,.
\end{equation*}
Since $R>R_0+1$, in view of \eqref{n8} it follows that
\begin{equation*}\label{n37}
h(x) \geq 0 = \omega(x)=\xi_n(x) \quad \textrm{for all}\  x \in \bar B_R\setminus B_{R-1}\,.
\end{equation*}
Furthermore, since $h'<0$, from \eqref{e1} and \eqref{e5} we get
\begin{equation}\label{n40}
\Delta h(x) \leq h''(\rho) + \frac{N-1}{\rho} \, h'(\rho) = 0 \quad \textrm{for all } x\in M\setminus \{o\}\,.
\end{equation}
From \eqref{n35}--\eqref{n40} we can then deduce that for each $n\in \mathbb N$ the function $h$ is a supersolution of problem
\begin{equation}\label{n41}
\begin{cases}
v_t  + a_n \, \Delta v = 0 & \textrm{in } (B_R\setminus \bar B_{R-1}) \times (0, T) \, , \\
v = 0 & \textrm{on } \partial B_R\times (0, T) \, , \\
v = \xi_n & \textrm{on } \partial B_{R-1} \times (0, T) \, , \\
v = 0  & \textrm{in } [\bar B_R\setminus B_{R-1}]\times \{T\}\,.
\end{cases}
\end{equation}
On the other hand $ \xi_n$ is a solution of \eqref{n41}. Hence, by the comparison principle,
\begin{equation}\label{n42}
h \geq \xi_n \quad \textrm{in } [B_R\setminus B_{R-1}]\times (0, T)\,.
\end{equation}
As a consequence, \eqref{n35} and \eqref{n42} imply
\begin{equation}\label{n44}
\frac{\partial}{\partial \nu}(h - \xi_n)\leq 0\quad \textrm{on } \partial B_R\times (0, T)\,.
\end{equation}
Due to \eqref{n43} and \eqref{n44} we therefore obtain
\begin{equation}\label{n45}
\left|\frac{\partial \xi_n}{\partial \nu} \right| \leq \left|\frac{\partial h}{\partial \nu} \right| \quad \textrm{on } \partial B_R\times (0, T) \, ;
\end{equation}
note that
\begin{equation}\label{e2lu}
\frac{\partial h(x)}{\partial \nu} =  \frac{(2-N)k_1}{\rho(x)^{N-1}}= \frac{2-N}{R^{N-1}} \, \frac{R^{N-2}(R-1)^{N-2}}{R^{N-2}- (R-1)^{N-2}} \, \eta(R-1, 0)\quad \textrm{for all } x\in \partial B_R\,.
\end{equation}
Thus there exists a constant $\hat C=\hat C(N, R_0)>0$ such that for any $R>R_0+1$
\begin{equation}\label{n46}
\left| \frac{\partial h(x)}{\partial \nu} \right| \leq \hat C \, \eta(R-1, 0)= \hat C \, \lambda \, e^{-\frac{K}{T+t_0}(R -1+ \rho_0)^{2-\sigma}}\quad \textrm{for all } x\in \partial B_R\,.
\end{equation}
Furthermore, since $0\leq \sigma<2$, hypothesis \eqref{H} implies that \eqref{e3c} holds for some $\psi\in \mathcal A$ such that $\log[\psi(\rho)]\sim \rho^{2-\sigma}$ as $\rho\to \infty$ (recall the discussion in Section \ref{main}). Hence, \eqref{n50a} yields
\begin{equation}\label{e52z}
\operatorname{meas}(S_R) \leq e^{\tilde C R^{2-\sigma}} \quad \textrm{for any } R>1
\end{equation}
for some $\tilde C>0$. From \eqref{n50aa}, \eqref{n45}, \eqref{n46} and \eqref{e52z} we then get
\begin{equation*}
J_n \leq C \, \hat C \, T \left(1+ R^{1-\sigma}\right) e^{\tilde C R^{2-\sigma}} \, R^{\frac{\sigma m}{m-1}} \, \lambda \, e^{-\frac{K}{T+t_0}(R-1+ \rho_0)^{2-\sigma}} \, ,
\end{equation*}
\begin{equation}\label{n52}
\limsup_{R\to \infty} \, \limsup_{n\to \infty} J_n =  0\,,
\end{equation}
provided
\begin{equation}\label{eq: T-small}
0<T<\frac{4 K}{5 \tilde C} \, .
\end{equation}

In order to estimate $I_n$ we multiply the differential equation in \eqref{n9} by $\Delta\xi_n$ and integrate over $B_R\times (0, T)$ to get
\[
\frac 1 2\int_M | \nabla\xi_n(0) |^2 \, d\mu + \int_0^T \int_M a_n \, (\Delta \xi_n)^2 \, d\mu dt \,=\, \frac 1 2 \int_M |\nabla \omega|^2 \, d\mu \, ,
\]
so that
\begin{equation}\label{n48}
I_n \leq C(\omega,u,v,R)\left[\int_0^T \int_{B_R}\frac{(a-a_n)^2}{a_n} \, d\mu dt  \right]^{\frac 1 2}\,.
\end{equation}
Let $\bar a$ be the extension of $a$ to $M\times \mathbb R$, supposed to be zero in $M\times [(-\infty,0)\cup (T, \infty)]$. Using standard mollifiers, for every $R>R_0+1$ we can construct a sequence $\{\alpha_{n, R} \}\equiv \{\alpha_n\}\subset C^\infty(M\times \mathbb R)$, $ \alpha_n>0$ such that
\begin{equation}\label{n48-bis}
\int_0^T \int_{B_R} (\bar a - \alpha_n)^2 \, d\mu dt \leq \frac 1 {n^2}
\end{equation}
and \eqref{n20} (with $a_n$ replaced by $\alpha_n$) hold with $ C_2 =C_1+1 $ for all $ n>n_0 =n_0(R)\in \mathbb N$. 
Upon setting
\begin{equation}\label{n48-ter}
a_n:= \alpha_n + \frac 1n\,,
\end{equation}
we can then assume that $a_n$ satisfies \eqref{n20} with $C_2 \equiv C_2+1$.
%
%
Note that, from \eqref{n48-bis}--\eqref{n48-ter}, there holds
\begin{equation}\label{n49}
\left(\int_0^T \int_{B_R} \frac{(a-a_n)^2}{a_n} \, d\mu dt \right)^{\frac 1 2} \leq \sqrt{2n}\left( \frac 1 n + \frac{\sqrt{T \mu(B_R)}}{n} \right) .
\end{equation}
Therefore, for any fixed $R>R_0+1$, in view of \eqref{n48} and \eqref{n49} we obtain
\begin{equation}\label{n53}
\limsup_{n\to \infty} I_n = 0\,.
\end{equation}

\smallskip
Now, for each $R>R_0+1$ we let first $\varepsilon\to 0$ and then $n\to \infty$ in \eqref{n14}. By exploiting \eqref{n16}, \eqref{n50}, \eqref{n52}, \eqref{n15}, \eqref{n19} and \eqref{n53} we infer the validity of \eqref{q11}, upon letting $R\to \infty$ eventually. Since $\omega$ is arbitrary, we deduce that $u(T)=v(T)$. We point out that equality holds for all $T$ complying with \eqref{eq: T-small}: however, such restriction can be removed by repeating iteratively the above scheme of proof, since the constant on the r.h.s.~of \eqref{eq: T-small} only depends on initial data through the constant $C_1$ in \eqref{n5}.

\smallskip

Let us now briefly discuss the case that $N=2$. We shall replace the function $h$ defined in \eqref{e1lu} by the following one:
\[
h(x) := - \bar k_1 \, \log(\rho(x)) + \bar k_2 \quad \textrm{for all } x \in \bar B_R\setminus B_{R-1} \,,
\]
where
\[\bar k_1 = \frac{\eta(R-1, T)}{\log(R) - \log(R-1)} > 0 \,, \quad \bar k_2=\frac{\log R}{\log(R)-\log(R-1)} \, \eta(R-1, T) >0\,,\]
so that
\[
\begin{cases}
-\bar k_1 \log(R-1)+ \bar k_2 = \eta (R-1, 0) \, , \\
-\bar k_1 \log(R)+\bar k_2=0  \, .
\end{cases}
\]
Hence, \eqref{n35}--\eqref{n40} continue to hold. Moreover, in place of \eqref{e2lu} we have
\[\frac{\partial h(x)}{\partial \nu} = - \frac{\bar k_1}{R} = -\frac{\eta(R-1, T)}{R\,[\log(R)-\log(R-1)]} \quad \textrm{for all } x\in \partial B_R \, , \]
which implies that also \eqref{n46} is fulfilled. The conclusion then follows as in the case $N\geq 3$.

\medskip

\noindent (b) Suppose now that $\gamma=-2$, whence $\sigma =2$. For any $\alpha,\beta>0$, set
\begin{equation*}
\eta(x,t)\equiv \eta(\rho(x), t) := \lambda \, e^{\alpha(T-t)}\left(1 + \rho(x)^2\right)^{-\beta}\quad \textrm{for all } (x,t)\in M\times [0, T]\,.
\end{equation*}
We claim that, for a suitable choice of the parameters $\alpha$ and $\beta$, the function $\eta$ satisfies
\begin{equation}\label{n33b}
\eta_t \,+\, a_n \, \Delta \eta \leq 0 \quad \textrm{in } B_R \times (0, T)\,.
\end{equation}
In fact, for every $\rho>0$ and $ 0<t<T$, we have:
\begin{equation}\label{n22b}
\eta_t(\rho, t)=-\alpha \, \eta(\rho, t)< 0\,,
\end{equation}
\begin{equation*}
\eta_\rho(\rho, t) = - 2 \beta \, \frac{\rho}{1+\rho^2} \, \eta(\rho, t) < 0\,,
\end{equation*}
\begin{equation*}
\eta_{\rho\rho}(\rho, t)= 2 \beta \, \frac{\eta(\rho, t)}{(1+\rho^2)^2}\left[- 1 + (1+2\beta)\rho^2 \right] .
\end{equation*}
Let $\mathcal D:= \{(x,t)\in B_R \times (0, T) : \, \Delta \eta(x,t)<0\}$. Thanks to \eqref{n5} and \eqref{n22b} it therefore follows that
\begin{equation*}
\eta_t + a_n \, \Delta \eta \leq 0 \quad \textrm{in } \mathcal D\,.
\end{equation*}
Note that, in view of \eqref{n20}, for some $C_3>0$ depending only on $C_2$ there holds
\begin{equation}\label{n20b}
a_n\leq C_3 \left(1+\rho(x)^2\right)\quad \textrm{in } B_R \times (0, T)
\end{equation}
for every $n\in \mathbb N, n>n_0$. In the region $ \{ (x,t) \in  B_R \times (0, T) \} \setminus \mathcal D$, in view of \eqref{e1}, \eqref{e5} and \eqref{n20b}, we have
\begin{equation*}
\begin{aligned}
\eta_t(\rho, t) + a_n \, \Delta \eta(\rho, t)  \le & \eta_t(\rho, t) + C_3 \left(1 +\rho^2\right) \Delta \eta(\rho,t) \\
\leq & \eta(\rho, t) \left\{-\alpha + 2 \beta C_3\left[-\frac 1{1+\rho^2}+ (1+2\beta) \, \frac{\rho^2}{1+\rho^2}-  m(\rho, \theta) \, \rho\right] \right\} \\
\leq & \eta(\rho, t) \left[-\alpha + 2 \beta C_3 \, (1+2\beta)   \right] \\
\leq & 0 \, ,
\end{aligned}
\end{equation*}
provided $ \alpha > 2\beta C_3 (1 + 2\beta )$. Thus \eqref{n33b} holds. The choice
\[ \lambda \geq \| \omega\|_\infty  \left(1+ R_0^2\right)^\beta \]
makes $\eta$ a supersolution of problem \eqref{n9b}. The conclusion then follows by arguing as in case (a); the only difference lies in the proof of \eqref{n52}. In fact, in view of \eqref{H} with $\gamma=-2$, \eqref{e3c} is satisfied for some $\psi \in \mathcal A$ satisfying \eqref{e51z}. As a consequence,
\begin{equation}\label{q71}
\textrm{meas}(S_R)\, \leq\, \tilde C R^{(N-1)\delta}\quad \textrm{for any } R>1
\end{equation}
for some $\tilde C>0$ and $\delta$ as in \eqref{e51z}. Whence, from \eqref{n50aa} and \eqref{q71} we get that
\begin{equation}\label{n18b}
J_n \leq 2C \, T \, \tilde C \, R^{(N-1)\delta} R^{\frac{2 m}{m-1}} \sup_{S_R\times(0, T)} \left|\frac{\partial \xi_n}{\partial \nu}\right| \,.
\end{equation}
Observe that, similarly to \eqref{n46}, we have
\begin{equation}\label{n46b}
\left| \frac{\partial h(x)}{\partial \nu} \right|\leq \hat C \, \eta(R-1, 0)= \hat C \, \lambda \, e^{\alpha T} \left(1+ (R-1)^2\right)^{-\beta}\quad \textrm{for all } x \in \partial B_R\,.
\end{equation}
By virtue of \eqref{n45}, \eqref{n18b} and \eqref{n46b}, we end up with
\begin{equation*}
J_n \leq 2C \, \hat C \, T \, \tilde C \, R^{(N-1)\delta+\frac{2 m}{m-1}} \, \lambda \, e^{\alpha T} (1+ (R-1)^2)^{-\beta}\,.
\end{equation*}
Hence, it is plain that \eqref{n52} is satisfied provided
\[\beta > \frac{m}{m-1} + \frac{(N-1)\delta}{2}\,.\]

\medskip

\noindent(c) In the cases $\gamma<-2$ (where again $ \sigma=2 $), one follows verbatim the argument given in item (b), with $\delta$ replaced by 1 (recall the discussion in Section \ref{main}).
\end{proof}

By means of minor modifications in the proof of Theorem \ref{thmuniq}, we can also obtain the following comparison principle.
\begin{cor}\label{comppr}
Let assumption \eqref{H} be satisfied. Let $u$ and $v$ be a subsolution and a supersolution, respectively, of problem \eqref{e64} (with the same $ u_0 $ and $T$). Suppose that, for some $C>0$, \eqref{e20z} holds both for $u$ and for $v$. Then $u\leq v$ a.e.~in $M\times (0, T)$\,.
\end{cor}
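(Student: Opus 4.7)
The approach is a one-sided modification of the duality scheme used in the proof of Theorem \ref{thmuniq}. The starting observation is that when $u$ is a subsolution and $v$ a supersolution with common datum $u_0$ and final time $T$, testing the respective sub/supersolution inequalities against a common nonnegative $\phi \in C^\infty_c(M\times[0,T])$ and subtracting yields the one-sided analogue of \eqref{n3}, namely
$$\int_M [u(T) - v(T)]\, \phi(T)\, d\mu \,\leq\, \int_0^T \int_M \left[(u-v)\, \phi_t + (u^m - v^m)\, \Delta \phi \right] d\mu\, dt\,.$$
I would first justify extending the class of admissible test functions from $C^\infty_c(M\times[0,T))$ (as in Definition \ref{defsol}) to $C^\infty_c(M\times[0,T])$ by a standard temporal cutoff: insert a smooth factor $\tau_\delta(t)$ equal to $1$ on $[0,T-\delta]$ and vanishing at $T$, and pass to the limit $\delta\to 0$ using the local boundedness of $u$ and $v$.

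Next, define the coefficient $a(x,t)$ as in \eqref{n4}; since \eqref{e20z} holds for both $u$ and $v$, the bound \eqref{n5} remains valid with the same constant $C_1$. Choose then $\phi = \phi_\varepsilon \xi_n$, where $\phi_\varepsilon$ is the radial cutoff of the uniqueness proof and $\xi_n$ solves the backward dual problem \eqref{n9} with nonnegative terminal datum $\omega \in C^\infty_c(M)$, $\omega\geq 0$. By the standard parabolic maximum principle applied to \eqref{n9}, whose coefficient $a_n$ is smooth and strictly positive, one has $\xi_n \geq 0$ in $B_R\times[0,T]$, so $\phi_\varepsilon \xi_n \geq 0$ is admissible in the one-sided inequality just derived.

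With this choice of $\phi$, the right-hand side decomposes exactly into the terms $I_{n\varepsilon}$ and $J_{n\varepsilon}$ defined in \eqref{n15}--\eqref{n16}. The decay analysis carried out in cases (a)--(c) of Theorem \ref{thmuniq} relies only on \eqref{n5}, on the construction of the supersolution $\eta$, on the approximating bound \eqref{n20}, and on the Laplacian comparison estimates; it depends on $u,v$ only through the constant $C_1$ and is therefore valid verbatim. Letting $\varepsilon\to 0$, then $n\to\infty$, then $R\to\infty$ gives
$$\int_M \left[ u(T) - v(T) \right] \omega \, d\mu \,\leq\, 0$$
for every nonnegative $\omega \in C^\infty_c(M)$ and every $T$ satisfying \eqref{eq: T-small}. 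Arbitrariness of $\omega$ yields $u(T)\leq v(T)$ a.e., and the small-time restriction is removed by the same finite-step iteration used in Theorem \ref{thmuniq}, since the bound in \eqref{eq: T-small} depends on $u,v$ only through $C_1$. The only genuinely new point compared with uniqueness is the sign check, namely the nonnegativity of the test function $\phi_\varepsilon \xi_n$, which I expect to be the mildest of obstacles since it reduces to the maximum principle for \eqref{n9}; all quantitative estimates are inherited unchanged from the proof of Theorem \ref{thmuniq}.
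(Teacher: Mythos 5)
Your proof is correct and is exactly the ``minor modifications'' the paper alludes to but does not carry out: the one-sided inequality replaces the identity \eqref{n3}, the temporal cutoff extends the admissible test functions to $C^\infty_c(M\times[0,T])$ (which the paper also uses tacitly in writing \eqref{n3}), and the parabolic maximum principle for the backward problem \eqref{n9} (after time reversal $s=T-t$, using $a_n>0$) gives $\xi_n\geq 0$ so that $\phi_\varepsilon\xi_n$ is an admissible nonnegative test function. Since the estimates for $I_n$ and $J_n$ use only the bound \eqref{n5} through the constant $C_1$, and $\eqref{n5}$ holds with the same $C_1$ when $u,v$ both satisfy \eqref{e20z} regardless of whether they are exact solutions, the scheme passes to the limit verbatim and gives $\int_M[u(T)-v(T)]\omega\,d\mu\leq 0$; the iteration to remove \eqref{eq: T-small} is likewise unchanged.
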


\section{Maximal existence time, nonexistence and blow-up: proofs}\label{blup}

We start off this section by the analogue of Lemma \ref{prop-comp-ricci}, under suitable assumptions over \emph{sectional} curvatures.

\begin{lem}\label{prop-comp-sect}
Let assumption \eqref{H}-\textnormal{(i)} be satisfied. Assume in addition that
\begin{equation*}\label{assumption-sectional-prop}
\operatorname{K}_{\omega}(x)\leq - C_1 \, d(x,o)^\gamma \quad \forall x \in M \setminus B_{R_1}
\end{equation*}
for some $ \gamma \in (-2,2] $ and $ C_1 , R_1 > 0 $. Then there exists a positive constant $ C^{\prime\prime} $, depending on $ C_1,\gamma,N $, such that
\begin{equation}\label{eq: m-est-below}
m(\rho,\theta) \ge \frac{C^{\prime\prime}}{\rho\left( 1+\rho \right)^{\sigma-2}} \quad \forall x \equiv (\rho,\theta) \in M \setminus \{ o \} \, .
\end{equation}
\end{lem}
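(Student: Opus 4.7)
The plan is to dualize the strategy of Lemma \ref{prop-comp-ricci}, invoking the sectional-curvature Laplacian comparison \eqref{e3a}--\eqref{e3} in place of the Ricci form \eqref{e3c}--\eqref{e4}. Concretely, the goal is to construct a model function $\psi \in \mathcal{A}$ such that
\[
\frac{\psi''(\rho)}{\psi(\rho)} \le C_1\,\rho^\gamma \quad \text{for } \rho > R_1, \qquad \frac{\psi''(\rho)}{\psi(\rho)} \le 0 \quad \text{for } \rho \in (0, R_1],
\]
so that $\operatorname{K}_{\omega}(x) \le -\psi''(\rho)/\psi(\rho)$ holds on all of $M \setminus \{o\}$ (the inner condition being trivial since $\operatorname{K}_{\omega} \le 0$ on a Cartan-Hadamard manifold), together with the pointwise lower bound
\[
(N-1)\,\frac{\psi'(\rho)}{\psi(\rho)} \ge \frac{C''}{\rho(1+\rho)^{\sigma-2}} \quad \forall \rho > 0.
\]
The Laplacian comparison \eqref{e3} then yields \eqref{eq: m-est-below} at once.

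To produce such a $\psi$, I would prescribe $c(\rho) := \psi''(\rho)/\psi(\rho)$ as a $C^\infty$ nonnegative function agreeing with $C_1\rho^\gamma$ for $\rho$ large and vanishing on a neighborhood of $0$, and then define $\psi$ as the solution of the linear Cauchy problem $\psi''(\rho) = c(\rho)\,\psi(\rho)$, $\psi(0) = 0$, $\psi'(0) = 1$. Since $c \ge 0$, the resulting $\psi$ is convex, strictly positive on $(0,\infty)$, and belongs to $\mathcal{A}$. The asymptotic behaviour of $v := \psi'/\psi$ is governed by the Riccati equation
\[
v'(\rho) + v(\rho)^2 = c(\rho),
\]
from which a standard WKB/Sturm analysis delivers $v(\rho) \sim \sqrt{C_1}\,\rho^{\gamma/2} = \sqrt{C_1}\,\rho^{1-\sigma}$ as $\rho \to \infty$ (using the identity $\gamma/2 = 1-\sigma$ for $\sigma = (2-\gamma)/2$), in accord with the prototype model \eqref{e50z}. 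Combined with $v(\rho) \sim 1/\rho$ as $\rho \to 0^+$ (a consequence of $\psi(\rho) = \rho + o(\rho)$), this produces the required lower bound on $(N-1)\psi'/\psi$ with a constant $C''$ depending only on $C_1, \gamma, N$ (with implicit dependence on $R_1$).

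The main technical obstacle is the smooth gluing, across the transition region near $\rho = R_1$, of the flat regime near the origin with the exponentially growing regime at infinity, in such a way that both the pointwise sign conditions on $\psi''/\psi$ and the precise asymptotics of $\psi'/\psi$ are preserved. This is a standard ODE-perturbation argument entirely analogous, though in the dual direction, to the one already carried out in \cite[Lemma 4.2 and Section 8]{GMV}; the same techniques apply here with only minor modifications. Alternatively, one may bypass the gluing by applying the Riccati comparison along radial geodesics directly on $[R_1,\infty)$, using the finite positive value $m(R_1,\theta)$ as initial datum, and patching with the a priori bound $m \ge (N-1)/\rho$ from \eqref{e5}, valid on $(0, R_1]$ solely by virtue of the Cartan-Hadamard assumption \eqref{H}-(i).
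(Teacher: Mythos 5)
Your main construction is the same as the paper's: you reduce the claim to finding $\psi\in\mathcal A$ satisfying exactly the three conditions the paper lists (its \eqref{eq: m-est-below-diffeq-a}--\eqref{eq: m-est-below-diffeq-2}), then invoke the Laplacian comparison \eqref{e3a}--\eqref{e3}, with the remaining ODE details deferred to \cite[Lemma 4.1 and Section 8]{GMV}. You correctly flag that $C''$ must implicitly depend on $R_1$ (a dependence the lemma's wording omits but which resurfaces in Lemma \ref{lem: exist-subsol} and Theorem \ref{opt1}), and your Riccati/WKB sanity check $v\sim\sqrt{C_1}\,\rho^{1-\sigma}$ correctly matches the target asymptotics. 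One caution on your ``alternative'': a direct scalar Riccati comparison on $m$ from $\rho=R_1$ is not quite as immediate as suggested, since lower bounds on $m$ under sectional curvature upper bounds require the matrix/Jacobi-field form of the comparison (the trace inequality $|S|^2\ge m^2/(N-1)$ points the wrong way), so in practice one is led back to the model-function route anyway.
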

\begin{proof}
There are not major differences with respect to the proof of Lemma \ref{prop-comp-ricci}. Again, we give some details for the reader's convenience. One has to provide a function $ \psi \in \mathcal{A} $ complying with
\begin{equation}\label{eq: m-est-below-diffeq-a}
\frac{\psi''(\rho)}{\psi(\rho)}  \leq  C_1 \, \rho^\gamma  \quad \forall \rho > R_1 \, ,
\end{equation}
\begin{equation}\label{eq: m-est-below-diffeq-b}
\frac{\psi''(\rho)}{\psi(\rho)}  \leq  0  \quad \forall \rho \in (0,R_1]
\end{equation}
and satisfying
\begin{equation}\label{eq: m-est-below-diffeq-2}
(N-1) \, \frac{\psi'(\rho)}{\psi(\rho)} \ge \frac{C^{\prime\prime}}{\rho\left( 1+\rho \right)^{\sigma-2}} \quad \forall \rho>0 \, .
\end{equation}
The same issues as in the case of \eqref{eq: m-est-above-diffeq} occur here. Actually the differential inequality is a bit more rigid because of the constraint \eqref{eq: m-est-below-diffeq-b}, which is fundamental in order to match sectional curvature in $B_{R_1}$. Nevertheless, it is still possible to show existence of some $ \psi \in \mathcal{A} $ fulfilling \eqref{eq: m-est-below-diffeq-a}--\eqref{eq: m-est-below-diffeq-2}. We refer again to \cite{GMV}: see in particular Lemma 4.1 there.
\end{proof}

Before proving Theorem \ref{opt1} and the consequent Corollary \ref{opt2}, we need two auxiliary lemmas concerning subsolutions to an elliptic problem deeply related to \eqref{e64}.
\begin{lem}\label{lem: exist-max}
Let assumption \eqref{H} be satisfied. For all $T>0$, suppose there exists a nonnegative, nontrivial function $ V_T \in X_{\infty,\sigma} $ satisfying
\begin{equation}\label{subsol-ellliptic}
V_{T} \le (m-1) T \, \Delta\left(V_T^m\right) \quad \textrm{in } M \, .
\end{equation}
Let $ u $ be any nonnegative solution to problem \eqref{e64}, in the sense of Definition \ref{defsol}, with initial datum $ u_0 \ge V_T $. Then the maximal existence time for $ u $ is at most $T$.
\end{lem}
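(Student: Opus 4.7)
The plan is to construct an explicit separable-variables subsolution $\underline u$ that inherits from the elliptic hypothesis \eqref{subsol-ellliptic} a blow-up at time $T$, and then to transfer this blow-up to $u$ via comparison. Concretely, I would set
\[
\underline u(x,t) := \left(1-\tfrac{t}{T}\right)^{-\frac{1}{m-1}} V_T(x), \qquad (x,t) \in M \times [0,T) .
\]
A short computation then gives
\[
\underline u_t - \Delta\!\left(\underline u^m\right) = \left(1-\tfrac{t}{T}\right)^{-\frac{m}{m-1}} \left[\frac{V_T}{(m-1)T} - \Delta\!\left(V_T^m\right)\right] \le 0
\]
by hypothesis \eqref{subsol-ellliptic}, so $\underline u$ is a very weak subsolution of \eqref{e64}, with $\underline u(\cdot,0) = V_T \le u_0$ and $\underline u(x,t) \to +\infty$ as $t \to T^{-}$ at every $x \in \{V_T>0\}$. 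Moreover, since $V_T \in X_{\infty,\sigma}$, the slice $\underline u(\cdot,t)$ lies in $X_{\infty,\sigma}$ for each $t \in [0,T)$.

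The crux of the argument is then the comparison
\begin{equation}\label{eq:plan-ugeu}
u(x,t) \ge \underline u(x,t) \qquad \textrm{a.e.\ in } M \times [0,\min(T_{\max},T)),
\end{equation}
where $T_{\max}$ denotes the maximal existence time of $u$. Since $u$ need not lie in $X_{\infty,\sigma}$, Corollary \ref{comppr} is not directly applicable to the pair $(u,\underline u)$. My plan is to interpose the solution $\widetilde u$ of \eqref{e64} provided by Theorem \ref{thmexi} starting from the datum $V_T \in X_{\infty,\sigma}$: on one hand, Corollary \ref{comppr} applied to the pair $(\widetilde u,\underline u)$ on each interval $[0,T-\delta]$ (both being in $X_{\infty,\sigma}$ with the same initial datum) gives $\widetilde u \ge \underline u$ for as long as $\widetilde u$ exists; on the other hand, the Dirichlet approximations $\widetilde u_R$ on $B_R$ of Section \ref{existence} (with initial datum $V_T\rfloor_{B_R}$ and zero boundary values) increase to $\widetilde u$ as $R \to \infty$, and by the classical bounded-domain comparison together with the minimality of such Dirichlet approximations one has $\widetilde u_R \le u$. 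Passing to the limit $R \to \infty$ chains $\underline u \le \widetilde u \le u$, which is \eqref{eq:plan-ugeu}.

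With \eqref{eq:plan-ugeu} at hand, the conclusion is immediate: if $T_{\max}$ exceeded $T$, then $u$ would be essentially bounded on every $\overline{B_R} \times [0,T]$, but choosing $R$ so that $\{V_T>0\}\cap B_R$ has positive measure, \eqref{eq:plan-ugeu} would force $u(x,t) \to +\infty$ as $t \to T^-$ on this set, incompatible with local boundedness; hence $T_{\max} \le T$. I expect the main obstacle to be precisely the minimality inequality $\widetilde u_R \le u$ used above, which compares the Dirichlet approximation on $B_R$ (vanishing on $\partial B_R$) with a general nonnegative very weak solution on $M$ carrying no prescribed behaviour at infinity: this is not automatic in the very weak setting, and the natural route is to test Definition \ref{defsol} for $u$ against cut-off functions supported in $B_R$ and combine with an $L^1$-type contraction/duality argument, in the spirit of the cut-off technique of Section \ref{uniqueness}.
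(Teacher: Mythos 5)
Your construction of the subsolution $\underline{u}$ and the overall strategy (transfer its blow-up at $t=T$ to $u$ via comparison, using the Dirichlet approximations as an intermediary) coincides with the paper's. The inequality $\widetilde u_R \le u$ that you flag as the main obstacle is in fact unproblematic and needs no duality machinery: $\widetilde u_R$ solves the homogeneous Dirichlet problem on $B_R$ with initial datum $V_T\rfloor_{B_R}\le u_0\rfloor_{B_R}$, while $u$ restricted to $B_R$ is nonnegative on $\partial B_R\times(0,\infty)$ and starts above $V_T$; the standard comparison principle for weak/very weak solutions of the PME on bounded domains (as in \cite{ACP,Vaz07}, which the paper cites for exactly this step) gives $\widetilde u_R\le u$ on $B_R$ for as long as $u$ exists.

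The genuine gap is elsewhere. You interpose the solution $\widetilde u$ produced by Theorem \ref{thmexi} with datum $V_T$, but that theorem only guarantees that $\widetilde u$ exists in $X_{\infty,\sigma}$ up to $T_1 = \underline{C}\,\|V_T\|_{\infty,r}^{1-m}$, and nothing forces $T_1\ge T$. On the contrary, from $\widetilde u\ge\underline u$ and the blow-up of $\|\underline u(t)\|_{\infty,r}$ as $t\to T^-$ one only deduces $T_1\le T$, possibly with strict inequality. Consequently your chain $\underline u\le\widetilde u\le u$ establishes \eqref{eq:plan-ugeu} only on $[0,\min(T_1,T_{\max}))$; if $T_1<T$ the blow-up of $\underline u$ at $t=T$ is never reached and the contradiction does not materialize. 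The paper avoids this by making the argument a continuation argument: it defines $\tau$ as the supremum of times for which $u\ge\underline u$, assumes for contradiction that $T_{\max}>T$ and $\tau<T$, and restarts the scheme at time $\tau$ from the datum $\underline u(\tau)\in X_{\infty,\sigma}$. The point is that one does not need the restarted solution $\hat u$ to exist all the way to $T-\tau$; any $\varepsilon>0$ of existence (which Theorem \ref{thmexi} always supplies) extends the comparison to $[0,\tau+\varepsilon)$ and contradicts the maximality of $\tau$. To close the gap in your proof you would need to incorporate precisely such an iteration step, since the one-shot interposition of $\widetilde u$ does not in general reach time $T$.
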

\begin{proof}
Let us define
$$ \underline{u}(x,t):=\left( 1 - \frac{t}{T} \right)^{-\frac{1}{m-1}} V_T(x) \quad \forall (x,t) \in M \times [0,T) \, . $$
In view of \eqref{subsol-ellliptic}, it is straightforward to check that $ \underline{u} $ is a subsolution to \eqref{e64}. Moreover, $ \underline{u}(t)$ belongs to $ X_{\infty,\sigma} $ for all $ t \in [0,T) $ and
\begin{equation}\label{subsol-ellliptic-blow}
\lim_{t \to T^{-}} \underline{u}(x,t) = + \infty \quad \forall x \in \mathcal{P} \, ,
\end{equation}
where $ \mathcal{P} $ is the positivity set of $ V_T $ which, by assumption, is of nonzero measure.

Suppose by contradiction that the maximal existence time for $u$ is larger than $ T $. Let us denote by $ \tau $ the maximal time for which $u$ is bounded from below by $ \underline{u} $, namely the largest number such that
\begin{equation}\label{subsol-ellliptic-under}
u(x,t) \ge \underline{u}(x,t) \quad \mathrm{in}\ M \times (0,\tau) \, .
\end{equation}
We set $ \tau=0 $ in case such a time does not exist. It is apparent that, thanks to \eqref{subsol-ellliptic-blow}, a contradiction is achieved if we show that $ \tau $ cannot be smaller than $T$. Hence, in order to show that $\tau\ge T$, suppose by contradiction that $ \tau < T $. In view of \eqref{subsol-ellliptic-under} there holds
\begin{equation}\label{subsol-ellliptic-under-1}
u(x,\tau) \ge \underline{u}(x,\tau) \quad \mathrm{in}\ M \, .
\end{equation}
Let us define by $ \hat{u} $ the solution of \eqref{e64} with initial datum $ \underline{u}(\tau) $. Since $ \underline{u}(\tau) \in X_{\infty,\sigma} $, Theorem \ref{thmexi} ensures that $ \hat{u} $ exists in $ X_{\infty,\sigma} $ for some time $ \varepsilon>0 $. We can assume with no loss of generality that $ \varepsilon < T-\tau $. By means of Corollary \ref{comppr} we can therefore deduce that
\begin{equation}\label{subsol-ellliptic-under-2}
\hat{u}(x,t-\tau) \ge \underline{u}(x,t) \quad \mathrm{in}\ M \times (\tau,\tau+\varepsilon) \, .
\end{equation}
Now, for every $R>0$, let us introduce the solution $ \hat{u}_{R}$ of the homogeneous Dirichlet problem
\begin{equation}\label{a-mm26}
\begin{cases}
(\hat{u}_{R})_t  \,=\, \Delta (\hat{u}_{R}^m) & \textrm{in } B_R\times (0,\infty)\,, \\
\hat{u}_{R} \,=\, 0 & \textrm{on } \partial B_R \times (0, \infty )\,, \\
\hat{u}_{R} \,=\, \underline{u}(\tau) \rfloor_{B_R} & \textrm{in } B_R\times \{0\}\,.
\end{cases}
\end{equation}
By comparison principles for very weak solutions on balls (see e.g.~\cite{ACP,Vaz07}) we have that
\begin{equation}\label{a-mm28}
0\leq \hat{u}_{R_1} \leq \hat{u}_{R_2} \leq \hat{u} \quad \textrm{in } B_{R_1} \times (0 , \varepsilon )
\end{equation}
for all $ 0<R_1<R_2 $. The same comparison principles (recall \eqref{subsol-ellliptic-under-1} and that $ u $ exists beyond $T$) ensure that
\begin{equation}\label{a-mm29}
\hat{u}_{R}(x,t-\tau) \leq u(x,t) \quad \textrm{in } B_{R} \times (\tau , \tau+\varepsilon )
\end{equation}
for all $ R>0 $. Thanks to \eqref{a-mm28} and to the uniqueness Theorem \ref{thmuniq} it is direct to check that in fact
\begin{equation*}\label{a-mm30}
\lim_{R\to\infty} \hat{u}_R(x,t) = \hat{u}(x,t) \quad \textrm{in } M \times (0,\varepsilon) \, ,
\end{equation*}
which, combined with \eqref{a-mm29} and \eqref{subsol-ellliptic-under-2}, yields
\begin{equation}\label{a-mm29-bis}
\underline{u}(x,t) \leq u(x,t) \quad \textrm{in } M \times (\tau , \tau+\varepsilon ) \, .
\end{equation}
A contradiction is then achieved since \eqref{a-mm29-bis} is incompatible with the definition of $ \tau $.
\end{proof}

\begin{lem}\label{lem: exist-subsol}
Let assumption \eqref{H}-\textnormal{(i)} be satisfied and let $ \gamma \in (-\infty,2) $. If $ \gamma \in (-2,2) $, assume in addition that there exist $ C_1 , R_1 > 0 $ such that
\begin{equation}\label{assumption-sectional}
\operatorname{K}_{\omega}(x)\leq - C_1 \, d(x,o)^\gamma \quad \forall x \in M \setminus B_{R_1}  \, .
\end{equation}
Then for all $T>0$ there exists a regular, positive function $ V_T \in X_{\infty,\sigma} $ which satisfies \eqref{subsol-ellliptic}. More precisely, one can choose $V_T \equiv W_{T,r}$ with $ W_{T,r} $ as in \eqref{eq: expl-subsol}, for suitable positive constants $ a,r $ depending only on $C_1,R_1,\gamma,N,m$.
\end{lem}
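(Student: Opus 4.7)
The plan is to mirror the argument of Proposition \ref{supersol} with the sign of the desired inequality reversed. I would take $V_T \equiv W_{T,r}$ with $W_{T,r}$ as in \eqref{eq: expl-subsol} and verify the subsolution inequality \eqref{subsol-ellliptic} after choosing the parameters $a$ and $r$ suitably. The first observation is that the time parameter $T$ cancels on both sides of \eqref{subsol-ellliptic}: writing $W(\rho):= a(r^2+\rho^2)^{\sigma/(2(m-1))}$ and using $W_{T,r}=W/T^{1/(m-1)}$, the inequality \eqref{subsol-ellliptic} becomes the $T$-independent statement
$$ W \le (m-1)\,\Delta(W^m) \quad \textrm{in } M. $$

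Next, using the polar form \eqref{e1} and the derivatives \eqref{mm7} computed in Proposition \ref{supersol}, the target inequality reduces to
$$ (r^2+\rho^2)^{(2-\sigma)/2} \le a^{m-1}\sigma m\left[1+\left(\frac{\sigma m}{m-1}-2\right)\frac{\rho^2}{r^2+\rho^2}+\rho\,m(\rho,\theta)\right] \quad \forall \rho \ge 0. $$
Thus, while in Proposition \ref{supersol} one used the \emph{upper} bound on $m(\rho,\theta)$ provided by Lemma \ref{prop-comp-ricci} together with a small $a$, here I would use a \emph{lower} bound on $\rho\,m(\rho,\theta)$ and a sufficiently large $a$.

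The source of that lower bound depends on $\gamma$. For $\gamma\in(-2,2)$, Lemma \ref{prop-comp-sect}, whose hypotheses are guaranteed precisely by the additional assumption \eqref{assumption-sectional}, yields $\rho\,m(\rho,\theta)\ge C^{\prime\prime}(1+\rho)^{2-\sigma}$, so the bracket grows at infinity at the same rate $\rho^{2-\sigma}$ as the left-hand side; the asymptotic balance then imposes the condition $a^{m-1}\sigma m\,C^{\prime\prime}\ge 1$, up to absorbing lower-order terms. For $\gamma\le-2$ one has $\sigma=2$, the left-hand side collapses to the constant $1$, and the pure Cartan-Hadamard bound \eqref{e5}, $\rho\,m(\rho,\theta)\ge N-1$, is already enough; this is why no sectional curvature assumption is needed in that range.

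I would fix $r=1$ and split the estimate on $B_1$ and $B_1^c$, exactly as in \eqref{t2}--\eqref{t3}. On $B_1$ every quantity is bounded and the bracket is bounded below by a strictly positive constant using only \eqref{e5}; the inequality is then satisfied once $a$ is taken large enough. On $B_1^c$, for $\gamma\in(-2,2)$ one uses Lemma \ref{prop-comp-sect} together with the elementary estimate $(r^2+\rho^2)^{(2-\sigma)/2}\le(1+\rho)^{2-\sigma}$ (valid for $r=1$ and $\sigma\le 2$), reducing the inequality, for $\rho$ large, to a condition of the form $\tfrac12 a^{m-1}\sigma m\,C^{\prime\prime}\ge 1$; for $\gamma\le -2$ the same step is trivial. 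The main technical obstacle I anticipate is the positivity of the bracket in the intermediate regime, where the cross-term $(\sigma m/(m-1)-2)\rho^2/(r^2+\rho^2)$ can be negative; this is handled by invoking \eqref{e5} to guarantee $\rho\,m(\rho,\theta)\ge N-1$, which leaves the bracket bounded below by the strictly positive quantity $\sigma m/(m-1)+N-2$ (positive for every $N\ge 2$). Finally, the membership $V_T\in X_{\infty,\sigma}$ is immediate from the definition \eqref{e40z} and the form of $W_{T,r}$, while smoothness of $V_T$ throughout $M$ follows from the fact that $\rho^2$ is $C^\infty$ on any Cartan-Hadamard manifold, the exponential map at $o$ being a global diffeomorphism.
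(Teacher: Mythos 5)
Your proof is correct and follows essentially the same path as the paper's: both take $V_T=W_{T,r}$ with $W_{T,r}$ as in \eqref{eq: expl-subsol}, reduce via the polar Laplacian \eqref{e1} and the derivatives \eqref{mm7} to the same $T$-independent pointwise inequality, and invoke Lemma \ref{prop-comp-sect} together with the Cartan-Hadamard bound \eqref{e5} for the needed lower estimate on $m(\rho,\theta)$. The only minor deviation is in how the free parameters are allocated: you fix $r=1$ and use \eqref{e5} to keep the bracketed quantity positive, then take $a$ large, whereas the paper first chooses $r$ large through \eqref{mm8}--\eqref{mm9} to absorb the cross term and then selects $a$ — both are valid and yield the claimed dependence of $a,r$ on $C_1,R_1,\gamma,N,m$.
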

\begin{proof}
Let us consider first the case $-2 <\gamma <2$, so that $0< \sigma <2$. In view of assumptions \eqref{H}-\textnormal{(i)} and \eqref{assumption-sectional}, inequality \eqref{e5} and Lemma \ref{prop-comp-sect} imply
\begin{equation}\label{mm4}
m(\rho, \theta) \geq \frac{N-1}{\rho} \quad \textrm{for all }  \rho \in (0,1] \, , \ \theta \in \mathbb S^{N-1} \, ,
\end{equation}
and
\begin{equation}\label{mm5}
m(\rho,\theta) \geq \frac{C^{\prime\prime}}{\rho^{\sigma-1}} \quad \textrm{for all } \rho>1 \, , \ \theta\in \mathbb S^{N-1} \, ,
\end{equation}
where the positive constant $ C^{\prime\prime} $ is as in \eqref{eq: m-est-below}.

We pick our candidate subsolution to be radial. More precisely, let $W(x)\equiv W(\rho(x))$ be defined by \eqref{mm18}.
Note that we can choose $r>0$, only depending on $C^{\prime\prime},\sigma,N,m $, so that
\begin{equation}\label{mm8}
\frac{N-1}{2} \, \frac{\sigma m}{m-1} \left( r^2 + \rho^2 \right)^{\frac{\sigma m}{2(m-1)}-1} \geq \frac{\sigma m}{m-1}\left|\frac{\sigma m}{m-1} -2 \right| \rho^2 \left( r^2 + \rho^2 \right)^{\frac{\sigma m}{2(m-1)}-2} \quad \forall\, \rho\in (0,1]
\end{equation}
and
\begin{equation}\label{mm9}
\frac{C^{\prime\prime}}{2} \, \frac{\sigma m}{m-1} \, \rho^{2-\sigma} \left(r^2 + \rho^2\right)^{\frac{\sigma m}{2(m-1)}-1} \geq \frac{\sigma m}{m-1}\left|\frac{\sigma m}{m-1} -2 \right| \rho^2 \left( r^2 + \rho^2 \right)^{\frac{\sigma m}{2(m-1)}-2} \quad \forall \rho \ge 1 \, .
\end{equation}
Indeed, one can take e.g.
\begin{equation*}
r = \sqrt{\frac 2 {(N-1) \wedge C^{\prime\prime} } \left| \frac{\sigma m}{m-1} -2 \right| } \vee \left(\frac{2-\sigma}{C^{\prime\prime}} \right)^{\frac{1}{2-\sigma}} \left(\frac{\sigma}{2-\sigma} \right)^{\frac{\sigma}{2(2-\sigma)}}\left|\frac{\sigma m}{m-1} - 2\right|^{\frac 1{2-\sigma}}\,.
\end{equation*}
From \eqref{e1}, \eqref{mm7}, \eqref{mm4}--\eqref{mm5}, \eqref{mm8}--\eqref{mm9} and the fact that $ W(\rho) $ is nondecreasing, we can therefore deduce that
\begin{equation}\label{mm11}
\Delta(W^m)(x) \geq  a^m \, \frac{N-1}{2} \, \frac{\sigma m}{m-1} \left(r^2 + \rho^2\right)^{\frac{\sigma m}{2(m-1)}-1}  \quad \textrm{for all } x\equiv (\rho, \theta) \in B_1
\end{equation}
and
\begin{equation}\label{mm12}
\Delta(W^m)(x) \geq a^m\, \frac{C^{\prime\prime}} 2 \, \frac{\sigma m}{m-1} \, \rho^{2-\sigma} \left(r^2 + \rho^2\right)^{\frac{\sigma m}{2(m-1)}-1}  \quad \textrm{for all } x \equiv (\rho, \theta) \in B_1^c \, .
\end{equation}
Now we want to select the parameter $a>0$ in order to make $W$ satisfy
\begin{equation}\label{mm13}
W \leq (m-1) \, \Delta (W^m) \quad \textrm{in } M \, .
\end{equation}
To this purpose, due to \eqref{mm11}--\eqref{mm12} and to the regularity of $W$, it suffices to require that
\begin{equation}\label{mm14}
a \left( r^2+ \rho^2 \right)^{\frac{\sigma}{2(m-1)}} \leq a^m \, \frac{N-1}{2} \, \sigma m \left(r^2 + \rho^2\right)^{\frac{\sigma m}{2(m-1)}-1} \quad \forall \rho\in (0,1]
\end{equation}
and
\begin{equation}\label{mm15}
a \left(r^2+ \rho^2\right)^{\frac{\sigma}{2(m-1)}} \leq a^m \, \frac{C^{\prime\prime}}{2} \, \sigma m \,  \rho^{2-\sigma} \left(r^2 + \rho^2\right)^{\frac{\sigma m}{2(m-1)}-1} \quad \forall \rho \geq 1 \, .
\end{equation}
Conditions \eqref{mm14} and \eqref{mm15} are fulfilled e.g.~if
\begin{equation*}
a = \left( \frac{2\left( r^2+1 \right)^{\frac{2-\sigma}{2}}}{[(N-1) \wedge C^{\prime\prime}] \, \sigma m} \right)^{\frac 1{m-1}} \, .
\end{equation*}
In the case $ \gamma \le -2 $ (where $ \sigma=2 $) it is enough to exploit the validity of \eqref{e5}:
to make sure that $W$ satisfies \eqref{mm13}, it is easy to check that any $ r>0 $ and $a^{1-m}=(N-1)m$ will do.

Hence, we have provided a regular, positive function that satisfies \eqref{mm13} and belongs by construction to $ X_{\infty,\sigma} $. An immediate computation shows that the function $ V_T := T^{-1/(m-1)} \, W $ has the same properties and complies with \eqref{subsol-ellliptic}.
\end{proof}

\begin{proof}[Proof of Theorem \ref{opt1}]
For any $T>0$, let $V_T$ be the subsolution provided by Lemma \ref{lem: exist-subsol}. Given any $ \delta>0 $, let
\begin{equation}\label{choice-pre-pre}
V_{T, \delta} := \left(V_{T}^m - \delta\right)^{\frac 1 m} \vee 0 \, .
\end{equation}
It is not difficult to check that $ V_{T,\delta} $ is still a nontrivial function satisfying weakly \eqref{subsol-ellliptic}:
\begin{equation}\label{choice-pre}
V_{T,\delta} \le (m-1) T \, \Delta\!\left(V_{T,\delta}^m\right) \quad \textrm{in } M \, .
\end{equation}
Now we set
\begin{equation}\label{choice-T}
T=(2a)^{m-1}  \left[ \liminf_{\rho(x)\to\infty} \frac{u_0(x)}{\rho(x)^{\frac{\sigma}{m-1}}} \right]^{1-m} ,
\end{equation}
so that
\begin{equation}\label{choice-T-2}
\lim_{\rho(x)\to\infty} \frac{V_{T,\delta}(x)}{\rho(x)^{\frac{\sigma}{m-1}}} = \frac12 \, \liminf_{\rho(x)\to\infty} \frac{u_0(x)}{\rho(x)^{\frac{\sigma}{m-1}}}  \, .
\end{equation}
Because $ u_0 \ge 0 $, due to \eqref{choice-T-2} it is plain that for a suitable $ \delta $ large enough there holds $ u_0 \ge V_{T,\delta} $. Hence, thanks to Lemma \ref{lem: exist-max}, we know that the maximal existence time for the corresponding solution $u$ (which does exist and is nonnegative by Theorems \ref{thmexi}--\ref{thmuniq} since $ u_0 \in X_{\infty,\sigma} $) is at most $T$. From \eqref{choice-T} we then have that \eqref{mm30-datum} holds with $ \overline{C}=(2a)^{m-1} $. Finally, the validity of \eqref{mm30} is still a consequence of Theorem \ref{thmexi}: if $ \|u(t)\|_{\infty,r} $ stayed bounded up to $ t=T $ then we could extend the existence time for $u$ beyond $ T $, which is in contradiction with the maximality of $ T $.
\end{proof}

\begin{proof}[Proof of Corollary \ref{opt2}]
As a consequence of the method of proof of Theorem \ref{opt1} and in view of \eqref{mm22}, it is apparent that for any $T>0$ we can pick $ \delta $ so large that $ u_0 \ge V_{T,\delta} $. As $ V_{T,\delta} $ satisfies \eqref{choice-pre}, Lemma \ref{lem: exist-max} ensures that any nonnegative solution of problem \eqref{e64}, in the sense of Definition \ref{defsol}, with initial datum $u_0$ exists at most up to $ t=T $. Since $T$ can be arbitrarily small, the assertion follows.
\end{proof}

\begin{oss}\label{oss: local-neg}\rm
For simplicity, we stated and proved Theorem \ref{opt1} and Corollary \ref{opt2} for nonnegative data only. However, with minor modifications, they can be shown to hold also for initial data in $ L^\infty_{\rm loc}(M) $, provided \eqref{tmax-opt} or \eqref{mm22} is satisfied. In fact it is enough to replace the boundary condition in \eqref{a-mm26} with
$$ \hat{u}_{R} \,=\, \underset{x \in M}{\operatorname{essinf}} \, {u_0(x)} \quad \textrm{on } \partial B_R \times (0, \infty )  $$
and \eqref{choice-pre-pre} with
$$ V_{T, \delta} := \left(V_{T}^m - \delta\right)^{\frac 1 m}  \, . $$
Accordingly, the statement of Corollary \ref{opt2} should be modified by asserting that no solution $u$ larger than or equal to $ {\operatorname{essinf}}_{x \in M} \, {u_0(x)} $ (rather than nonnegative) exists.
\end{oss}

\smallskip

We now turn to pointwise blow-up. Before proving Theorem \ref{opt-blow}, we need a crucial lemma concerning the Cauchy problem \eqref{pb: cauchy}.

\begin{lem}\label{elliptic}
Let $ M \equiv M_{\psi} $ be any model manifold satisfying hypothesis \eqref{H} for some $ \gamma \in (-\infty,2) $. If $ \gamma \in (-2,2) $, assume in addition that \eqref{assumption-sectional} holds for some $ C_1 , R_1 > 0 $. Let $ T, \alpha>0 $. Then there exists a unique solution $ \mathsf{W}_{T,\alpha} $ to the Cauchy problem \eqref{pb: cauchy}, which is positive, belongs to $ X_{\infty,\sigma} $ and satisfies
\begin{equation}\label{sol-ellliptic-beh}
\frac{k_0}{T^{\frac{1}{m-1}}} \le \liminf_{\rho\to\infty} \frac{\mathsf{W}_{T,\alpha}(\rho)}{\rho^{\frac{\sigma}{m-1}}} \le \limsup_{\rho\to\infty} \frac{\mathsf{W}_{T,\alpha}(\rho)}{\rho^{\frac{\sigma}{m-1}}} \le \frac{k_1}{T^{\frac{1}{m-1}}}
\end{equation}
for positive constants $k_0$ and $k_1$ depending on $ C_0,\gamma,N,m $ and on $ C_1, R_1,\gamma,N,m $, respectively, but not on $\alpha$. Moreover, such solutions are strictly ordered with respect to $ \alpha $, namely $  \mathsf{W}_{T,\alpha_1} > \mathsf{W}_{T,\alpha_0} $ for all $ \alpha_1 > \alpha_0>0 $.
\end{lem}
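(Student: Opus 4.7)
The plan is to read \eqref{pb: cauchy} as the radial reduction, on the model $M_\psi$, of the elliptic equation $V=(m-1)T\,\Delta(V^m)$, and to combine basic ODE arguments with the explicit radial barriers already constructed in Proposition \ref{supersol} and Lemma \ref{lem: exist-subsol}. First I would rewrite the equation in the conservative form $\bigl(\psi^{N-1}(W^m)'\bigr)'=\psi^{N-1}W/[(m-1)T]$ and integrate twice with the initial conditions $W(0)=\alpha$, $(W^m)'(0)=0$, obtaining the fixed-point identity
\begin{equation*}
W^m(\rho)=\alpha^m+\frac{1}{(m-1)T}\int_0^\rho\frac{1}{\psi(\tau)^{N-1}}\int_0^\tau\psi(s)^{N-1}\,W(s)\,ds\,d\tau\,.
\end{equation*}
Since $\psi\in\mathcal A$ gives $\psi(\tau)\sim\tau$ near $0$, the inner integral absorbs the apparent singular weight, a contraction argument on $C([0,\rho_0])$ produces a unique positive local solution, and the monotone dependence of the right-hand side on $W$ forces it to be nondecreasing in $\rho$.

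To upgrade to global existence and obtain the double bound \eqref{sol-ellliptic-beh} I would use the radial barriers $\overline W_{T,r}=a_1(r^2+\rho^2)^{\sigma/(2(m-1))}/T^{1/(m-1)}$ of Proposition \ref{supersol} and $\underline W_{T,r}=a_0(r^2+\rho^2)^{\sigma/(2(m-1))}/T^{1/(m-1)}$ of Lemma \ref{lem: exist-subsol}: on a model manifold these are smooth, radial, satisfy the Neumann condition at $\rho=0$ and are classical super- and subsolutions of \eqref{pb: cauchy} (the latter as soon as $r$ is large enough). Given $\alpha>0$, I would pick $r^\ast$ so that $\overline W_{T,r^\ast}(0)\ge\alpha$ and, when possible, $r_\ast$ so that $\underline W_{T,r_\ast}(0)\le\alpha$, and then apply an ODE comparison to get
\begin{equation*}
\underline W_{T,r_\ast}(\rho)\,\le\,\mathsf W_{T,\alpha}(\rho)\,\le\,\overline W_{T,r^\ast}(\rho)\qquad\forall\rho\ge 0\,.
\end{equation*}
If $\alpha$ is so small that no admissible $r_\ast$ exists, the monotonicity of $\mathsf W_{T,\alpha}$ in $\rho$ lets me restart the lower comparison from the first $\rho_0$ at which $\mathsf W_{T,\alpha}(\rho_0)$ reaches the minimal admissible subsolution value; this only affects the behaviour on a compact set. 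The sandwich rules out finite-$\rho$ blow-up, proves $\mathsf W_{T,\alpha}\in X_{\infty,\sigma}$ and — since both barriers share the $r$-independent leading coefficient $a_i T^{-1/(m-1)}$ at infinity — yields \eqref{sol-ellliptic-beh} with $k_0=a_0$, $k_1=a_1$. Uniqueness and the strict monotonicity $\mathsf W_{T,\alpha_1}>\mathsf W_{T,\alpha_0}$ for $\alpha_1>\alpha_0$ follow from the very same comparison applied to two genuine solutions.

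The main obstacle I foresee is executing the ODE comparison cleanly at the singular endpoint $\rho=0$, where the drift $(N-1)\psi'/\psi$ blows up like $1/\rho$. The remedy is to phrase every comparison through the weighted derivatives $\psi^{N-1}(W^m)'$, which vanish at $\rho=0$ by the Neumann initial condition, so that no boundary contribution arises at a first-crossing point and the usual sign-tracking argument goes through. With this technical point dispatched, all the remaining assertions of the lemma reduce to the barrier computations already carried out in Proposition \ref{supersol} and Lemma \ref{lem: exist-subsol}.
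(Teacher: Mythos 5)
Your plan hinges on sandwiching $\mathsf W_{T,\alpha}$ between the explicit radial barriers of Proposition \ref{supersol} and Lemma \ref{lem: exist-subsol} by an ODE comparison for the initial value problem. That comparison step is not valid here, and this is a genuine gap, not a technicality about the singular drift at $\rho=0$. For the equation
$\bigl(\psi^{N-1}(W^m)'\bigr)'=\psi^{N-1}W/[(m-1)T]$,
the reaction term on the right-hand side is \emph{increasing} in $W$. Initial-value comparison in the sub/super direction you need fails for such equations. Concretely (take $\psi\equiv 1$, $N=1$, $m=2$, $T=1$): $\overline W\equiv 1$ satisfies $(\overline W^2)''=0\le 1=\overline W$ and matches the Cauchy data $\overline W(0)=1$, $(\overline W^2)'(0)=0$; yet the genuine solution with those data has $(\mathsf V)''=\mathsf W>0$, so $\mathsf V=\mathsf W^2$ is strictly convex with zero derivative at $0$ and hence $\mathsf W>1=\overline W$ for all $\rho>0$. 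In the same spirit, starting from $\overline W(0)\ge\alpha$ gives you only that $\psi^{N-1}\bigl[(\overline W^m)'-(\mathsf W^m)'\bigr]$ is bounded \emph{above} by a positive quantity at a first-crossing point — the wrong direction; no contradiction is reached, and the sign-tracking argument you envision does not close. (The same obstruction rules out the lower comparison with $\underline W_{T,r_*}$, and the ``restart at $\rho_0$'' patch inherits it.) Note that the strict-ordering argument in the paper does go through, but only because for two \emph{genuine solutions} the differential relation on the weighted derivative difference is an equality, so it can be integrated forward without ambiguity — this bootstrap breaks the moment one side is only a sub- or supersolution.

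This is precisely why the paper does not argue by barrier comparison at the ODE level. Instead it works with $\mathsf V=\mathsf W^m$, observes from the conservative form and the Neumann data that $\psi^{N-1}\mathsf V'$ is nonnegative and increasing (so $\mathsf V$ is positive and increasing), and then proves the two-sided asymptotics by direct integral estimates: the upper bound comes from $\mathsf V^{-1/m}\mathsf V'\le \int_0^\rho\psi^{N-1}/\psi(\rho)^{N-1}$ combined with L'H\^opital and the comparison models $\psi_\ast$; the lower bound is obtained via the iteration $\beta_{n+1}=\beta_n/m+\sigma\to\sigma m/(m-1)$, followed by a second iteration to eliminate the dependence of the constant on $\alpha$. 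Your write-up does not contain this iterative mechanism, and without a valid comparison principle there is no substitute for it. To repair your argument you would have to either (a) establish a comparison principle tailored to the specific monotone structure at hand (not available in this direction), or (b) replace the barrier step by the integral iteration as in the paper.

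A secondary remark: even if the barrier comparison were valid, identifying $k_0=a_0$ requires some care, because in Lemma \ref{lem: exist-subsol} the coefficient $a$ in the subsolution depends on the parameter $r$, and $r$ would in your scheme be chosen as a function of $\alpha$. Your restart trick (always compare from the fixed $r_{\min}$) does address this, but it is worth being explicit that the $\alpha$-independence of $k_0$ then rests entirely on fixing $r=r_{\min}$.
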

\begin{proof}
We shall consider the case $ T=1/(m-1) $ only, and set $ \mathsf{W}:=\mathsf{W}_{1/(m-1),\alpha} $. The conclusions for general $T$ will follow by a scaling argument. Hence, let $ \mathsf{V}:= \mathsf{W}^{m} $. The differential equation in \eqref{pb: cauchy} can be rewritten as
\begin{equation}\label{sol-ellliptic-radial}
\left(\psi^{N-1} \, \mathsf{V}^\prime \right)^\prime = \psi^{N-1} \, \mathsf{V}^{\frac{1}{m}} \quad \textrm{in } (0,\infty) \, ;
\end{equation}
accordingly, the initial conditions read
\begin{equation}\label{sol-ellliptic-radial-init}
\mathsf{V}(0)=\alpha^m \, , \quad \mathsf{V}^\prime(0)=0 \, .
\end{equation}
Because $ m>1 $, $ \alpha>0 $, $ \psi(\rho) \sim \rho $ as $ \rho \to 0 $ and $ \psi^\prime(0)=1 $, standard fixed point arguments ensure that at least a local solution to \eqref{sol-ellliptic-radial}--\eqref{sol-ellliptic-radial-init} exists. On the other hand the r.h.s.~of \eqref{sol-ellliptic-radial} is sublinear in $ \mathsf{V} $ since $ m>1 $, whence the solution is in fact global in $ (0,\infty) $ (see e.g.~\cite[Theorem 1.6]{PS}).

From \eqref{sol-ellliptic-radial} it follows that $ \psi^{N-1} \, \mathsf{V}^\prime $ is increasing as long as $ \mathsf{V} $ stays positive, which implies that $ \mathsf{V} $ is actually positive and increasing everywhere thanks to \eqref{sol-ellliptic-radial-init}. We shall use this information in order to prove \eqref{sol-ellliptic-beh} (in this regard note that the estimate from above directly yields $ \mathsf{W} \in X_{\infty,\sigma} $). Indeed, by integrating \eqref{sol-ellliptic-radial} and using \eqref{sol-ellliptic-radial-init}, we get:
\begin{equation}\label{sol-ellliptic-radial-integ}
\mathsf{V}^\prime(\rho) = \frac{\int_{0}^{\rho} \psi(s)^{N-1} \, \mathsf{V}(s)^{\frac{1}{m}} \, ds }{\psi(\rho)^{N-1}}  \quad \forall \rho \in (0,\infty) \, .
\end{equation}
Let us assume for the moment that $ \psi $ complies with
\begin{equation}\label{psi-gen}
\lim_{\rho \to \infty}  \rho^{\sigma-1} \, \frac{\psi^\prime(\rho)}{\psi(\rho)} =: C>0
\end{equation}
for $ \sigma \in (0,2) $; we shall explain in the end of the proof how one can get rid of such extra assumptions. So, by exploiting the fact that $ \mathsf{V} $ is increasing, from \eqref{sol-ellliptic-radial-integ} we deduce
\begin{equation}\label{sol-ellliptic-radial-integ-bis}
\mathsf{V}(\rho)^{-\frac{1}{m}} \, \mathsf{V}^\prime(\rho) \le \frac{\int_{0}^{\rho} \psi(s)^{N-1} \, ds}{\psi(\rho)^{N-1}}  \quad \forall \rho \in (0,\infty) \, ,
\end{equation}
that is
\begin{equation*}\label{sol-ellliptic-radial-integ-ter}
\left( \mathsf{V}^{\frac{m-1}{m}} \right)^{\prime}\!(\rho) \le \frac{m-1}{m} \, \frac{\int_{0}^{\rho} \psi(s)^{N-1} \, ds}{\psi(\rho)^{N-1}}  \quad \forall \rho \in (0,\infty) \, .
\end{equation*}
By means of L'H\^{o}pital's rule, thanks to \eqref{psi-gen} one shows that
\begin{equation}\label{sol-ellliptic-radial-integ-quater}
\lim_{\rho\to\infty} \rho^{1-\sigma} \, \frac{\int_{0}^{\rho} \psi(s)^{N-1} \, ds}{\psi(\rho)^{N-1}} = \frac{1}{(N-1)C} \, .
\end{equation}
As a consequence,
\begin{equation}\label{sol-ellliptic-radial-integ-qqs}
\left( \mathsf{V}^{\frac{m-1}{m}} \right)^{\prime}\!(\rho) \le \hat{C} \, \rho \left(1+\rho\right)^{\sigma-2}  \quad \forall \rho \in (0,\infty)
\end{equation}
for another $ \hat{C}>0 $ depending on $ \psi , C , \sigma , N , m$. An integration of \eqref{sol-ellliptic-radial-integ-qqs} readily yields the last inequality in \eqref{sol-ellliptic-beh}.

We now aim at proving the first inequality in \eqref{sol-ellliptic-beh}. To this end, in addition to \eqref{psi-gen} we shall also assume that
\begin{equation}\label{psi-half}
\lim_{\rho \to \infty} \frac{\psi( \rho / {2})}{\psi(\rho)} = 0  \, .
\end{equation}
We shall show below why this is no loss of generality. We therefore proceed by means of a recursive procedure: namely, given $ n \in \mathbb{N} $, suppose that $ \mathsf{V} $ complies with
\begin{equation}\label{V-liminf-1}
\mathsf{V}(\rho) \ge c_n \, \rho^{\beta_n} \quad \forall \rho \in (1,\infty)
\end{equation}
for some $ \beta_n \ge 0$ and $c_n>0$. By plugging estimate \eqref{V-liminf-1} into \eqref{sol-ellliptic-radial-integ} and using the fact that $ \mathsf{V} $ is increasing, we obtain:
\begin{equation}\label{V-liminf-3}
\mathsf{V}^\prime(\rho) \ge \frac{\int_{\frac{\rho}{2}}^{\rho} \psi(s)^{N-1} \, \mathsf{V}(s)^{\frac{1}{m}} \, ds }{\psi(\rho)^{N-1}} \ge \frac{ c_n^{\frac1m} }{2^{\frac{\beta_n}{m}}} \, \rho^{\frac{\beta_n}{m}} \, \frac{\int_{\frac{\rho}{2}}^{\rho} \psi(s)^{N-1} \, ds}{\psi(\rho)^{N-1}} \quad \forall \rho \in (2,\infty) \, .
\end{equation}
By  \eqref{psi-gen}, \eqref{psi-half} and L'H\^{o}pital's rule, one checks that
\begin{equation}\label{V-rec-3}
\lim_{\rho\to\infty} \rho^{1-\sigma} \, \frac{\int_{\frac{\rho}{2}}^{\rho} \psi(s)^{N-1} \, ds}{\psi(\rho)^{N-1}} = \frac{1}{(N-1)C} \, ;
\end{equation}
in particular, we can assert that
\begin{equation}\label{V-rec-4}
\frac{\int_{\frac{\rho}{2}}^{\rho} \psi(s)^{N-1} \, ds}{\psi(\rho)^{N-1}} \ge \tilde{C} \, \rho^{\sigma-1} \quad \forall \rho \in ( 1,\infty)
\end{equation}
for another $ \tilde{C}>0 $ depending on $ \psi , C , \sigma , N , m$. By combining \eqref{V-liminf-3} and \eqref{V-rec-4} we get
\begin{equation}\label{V-liminf-4}
\mathsf{V}^\prime(\rho) \ge \frac{\tilde{C} \, c_n^{\frac{1}{m}}}{2^{\frac{\beta_n}{m}}} \, \rho^{\frac{\beta_n}{m}+\sigma-1}  \quad \forall \rho \in (2,\infty) \, .
\end{equation}
An integration of \eqref{V-liminf-4} from $ 2 $ to $ \rho $ yields
\begin{equation}\label{V-liminf-5}
\mathsf{V}(\rho) \ge \mathsf{V}(2) + \frac{\tilde{C} \, c_n^{\frac{1}{m}}}{2^{\frac{\beta_n}{m}} \left( \frac{\beta_n}{m}+\sigma \right)} \left[ \rho^{\frac{\beta_n}{m}+\sigma} - 2^{\frac{\beta_n}{m}+\sigma} \right] \quad \forall \rho \in (2,\infty) \, .
\end{equation}
Since $ \mathsf{V} $ is increasing, we know in particular that $ \mathsf{V}(\rho) \ge \alpha^m $ for all $ \rho \in (1,2] $. Using this information, it is not difficult to check that \eqref{V-liminf-5} implies e.g.
\begin{equation*}\label{V-liminf-6}
\mathsf{V}(\rho) \ge \left[ \frac{\tilde{C} \, c_n^{\frac{1}{m}}}{2^{\frac{\beta_n}{m}+1}\left( \frac{\beta_n}{m}+\sigma \right)} \wedge \frac{\alpha^m}{2^{\frac{\beta_n}{m}+\sigma}} \right] \rho^{\frac{\beta_n}{m}+\sigma} \quad \forall \rho \in (1,\infty) \, .
\end{equation*}
Summing up, starting from \eqref{V-liminf-1} we have deduced the validity of
\begin{equation*}\label{V-rec-next}
\mathsf{V}(\rho) \ge c_{n+1} \, \rho^{\beta_{n+1}} \quad \forall \rho \in (1,\infty)
\end{equation*}
with
\begin{equation}\label{V-rec-parameters}
\beta_{n+1}=\frac{\beta_n}{m} + \sigma \, , \quad c_{n+1} = \frac{\tilde{C} \, c_n^{\frac{1}{m}}}{2^{\frac{\beta_n}{m}+1}\left( \frac{\beta_n}{m}+\sigma \right)} \wedge \frac{\alpha^m}{2^{\frac{\beta_n}{m}+\sigma}} \, .
\end{equation}
As remarked above, we know that \eqref{V-liminf-1} is satisfied with $ \beta_0=0 $ and $ c_0=\alpha^m $.
If we start the recursive procedure with such data, it is direct to see that \eqref{V-rec-parameters} yields
\begin{equation*}\label{V-rec-limit}
\lim_{n\to\infty} \beta_n = \frac{\sigma m}{m-1} \, , \quad  \liminf_{n\to\infty} c_n = \underline{c}(\alpha)>0 \, ,
\end{equation*}
where the constant $ \underline{c} $ depends on $ \alpha,\tilde{C},\sigma,m $ (we emphasize its dependence on $ \alpha $ for later purpose). Upon passing to the limit in \eqref{V-liminf-1} as $ n \to \infty $ we get
\begin{equation}\label{V-rec-final}
\mathsf{V}(\rho) \ge \underline{c}(\alpha) \, \rho^{\frac{\sigma m}{m-1}} \quad \forall \rho \in (1,\infty) \, ,
\end{equation}
which entails
\begin{equation}\label{V-rec-final-bis}
\liminf_{\rho \to \infty} \frac{\mathsf{V}(\rho)}{\rho^{\frac{\sigma m}{m-1}}} \ge \underline{c}(\alpha) \, .
\end{equation}
In order to establish the first inequality in \eqref{sol-ellliptic-beh}, we still need to get rid of the dependence of $ \underline{c} $ on $ \alpha $. To this aim, let us carry out another iterative scheme. From \eqref{V-rec-final-bis} we deduce that, for any $ \varepsilon \in (0,\underline{c}) $, there holds
\begin{equation}\label{V-iter-bis-1}
\mathsf{V}(\rho) \ge \left( \underline{c}-\varepsilon \right) \rho^{\frac{\sigma m}{m-1}}
\end{equation}
for all $ \rho $ large enough. By plugging \eqref{V-iter-bis-1} into \eqref{sol-ellliptic-radial-integ} and reasoning as above, we get
\begin{equation}\label{V-iter-bis-2}
\mathsf{V}^\prime(\rho) \ge \frac{\tilde{C} \left( \underline{c}-\varepsilon \right)^{\frac1m}}{2^{\frac{\sigma}{m-1}}} \, \rho^{\frac{\sigma m}{m-1}-1}
\end{equation}
for all $ \rho $ large enough. Upon integrating \eqref{V-iter-bis-2}, it follows that:
\begin{equation*}\label{V-iter-bis-3}
\liminf_{\rho \to \infty} \frac{\mathsf{V}(\rho)}{\rho^{\frac{\sigma m}{m-1}}} \ge \frac{(m-1) \, \tilde{C} \left( \underline{c}-\varepsilon \right)^{\frac1m}}{2^{\frac{\sigma}{m-1}} \, \sigma m } \, ,
\end{equation*}
whence
\begin{equation}\label{V-iter-bis-4}
\liminf_{\rho \to \infty} \frac{\mathsf{V}(\rho)}{\rho^{\frac{\sigma m}{m-1}}} \ge \frac{(m-1) \, \tilde{C} \, \underline{c}^{\frac1m}}{2^{\frac{\sigma}{m-1}} \, \sigma m }
\end{equation}
as $ \varepsilon $ is arbitrarily small. So from \eqref{V-rec-final-bis} we have deduced \eqref{V-iter-bis-4}: since $ m>1 $, one sees that by proceeding iteratively we end up with
\begin{equation}\label{V-iter-bis-5}
\liminf_{\rho \to \infty} \frac{\mathsf{V}(\rho)}{\rho^{\frac{\sigma m}{m-1}}} \ge \left[ \frac{(m-1) \, \tilde{C}}{2^{\frac{\sigma}{m-1}} \sigma m } \right]^{\frac{m}{m-1}} .
\end{equation}

\smallskip
Let us now comment on the fact that it is not restrictive to assume \eqref{psi-gen} and \eqref{psi-half}. In view of \eqref{H}-(i) and \eqref{assumption-sectional}, by proceeding as outlined in the proof of Lemma \ref{prop-comp-sect}, it is indeed possible to construct a suitable function $ {\psi}_\ast \in \mathcal{A} $ such that
\begin{equation}\label{psi-ast-1}
 \frac{\psi_\ast^\prime}{\psi_\ast} \le \frac{\psi^\prime}{\psi} \, , \quad  \lim_{\rho \to \infty}  \rho^{\sigma-1} \, \frac{\psi_\ast^\prime(\rho)}{\psi_\ast(\rho)} =: C_\ast>0 \, ,
\end{equation}
for some constant $C_\ast$ depending only on $C_1,R_1,\gamma,N,m$. In fact $ \psi_\ast $ is not explicit, but it is chosen as the solution of an explicit second-order linear ODE: we refer again the reader to \cite[Section 4]{GMV}. Thanks to \eqref{psi-ast-1} it is immediate to see that, because $ \mathsf{V}$ is increasing, the computations that led to \eqref{sol-ellliptic-radial-integ-qqs} can be repeated starting from \eqref{sol-ellliptic-radial-integ-bis} with $ \psi $ replaced by $ \psi_\ast $. Moreover, since we are supposing that $ \sigma \in (0,2) $, one checks that the right-hand equality in \eqref{psi-ast-1} also implies \eqref{psi-half} with $ \psi $ replaced by $ \psi_\ast $. Similarly, in view of \eqref{H}-(ii), by arguing as explained in the proof of Lemma \ref{prop-comp-ricci} we can infer the existence of another function $ \psi_\ast \in \mathcal{A} $ satisfying \eqref{psi-ast-1} with the left-hand inequality reversed: this, combined with \eqref{psi-half}, is enough in order to reproduce the computations that led us to \eqref{V-rec-final} and \eqref{V-iter-bis-5}.

In the cases $  \gamma \le -2 $, which correspond to $ \sigma=2 $, the functions $ \psi_\ast $ one constructs behave like powers at infinity (see \cite[Sections 8.1--8.2]{GMV}); as concerns the estimate from above \eqref{sol-ellliptic-radial-integ-qqs}, one can even choose $ \psi_\ast(\rho)=\rho $. This is enough to establish the analogues of \eqref{sol-ellliptic-radial-integ-quater} and \eqref{V-rec-3}, the latter being the core of the above arguments.
%
%

\smallskip
Finally, let us show that solutions are strictly ordered with respect to $ \alpha $. Given $ \alpha_1>\alpha_0>0 $, one has indeed
\begin{equation*}\label{diff-alpha}
(m-1)T\left[\psi^{N-1} \left( \mathsf{W}_{T,\alpha_1}^m - \mathsf{W}_{T,\alpha_0}^m \right)^\prime \right]^\prime = \psi^{N-1} \left( \mathsf{W}_{T,\alpha_1}-\mathsf{W}_{T,\alpha_0} \right) \quad \textrm{in } (0,\infty) \, .
\end{equation*}
Since $ \mathsf{W}_{T,\alpha_1}(0)-\mathsf{W}_{T,\alpha_0}(0)=\alpha_1-\alpha_0>0 $ and $ ( \mathsf{W}_{T,\alpha_1}^m - \mathsf{W}_{T,\alpha_0}^m )^\prime(0)=0 $ we deduce that, away from the origin, $ ( \mathsf{W}_{T,\alpha_1}^m - \mathsf{W}_{T,\alpha_0}^m )^\prime $ stays positive as long as $ \mathsf{W}_{T,\alpha_1}-\mathsf{W}_{T,\alpha_0} $ is, which easily yields positivity everywhere.
\end{proof}

\begin{proof}[Proof of Theorem \ref{opt-blow}]
First of all note that, in view of \eqref{mm43} and Lemma \ref{elliptic}, any $ \mathsf{W}_{T,\alpha} \equiv \mathsf{W}_{T,\alpha}(\rho(x)) $ is a positive solution, belonging to $ X_{\infty,\sigma} $, of the elliptic equation
\begin{equation*}\label{eq: diff-W}
\mathsf{W}_{T,\alpha} = (m-1)T \, \Delta\!\left( \mathsf{W}_{T,\alpha}^m \right) \quad \textrm{in } M_\psi \, .
\end{equation*}
By arguing as in the proof of Proposition \ref{supersol}, we then infer that the separable profile
$$
(x,t) \mapsto \left( 1 - \frac{t}{T} \right)^{-\frac{1}{m-1}} \mathsf{W}_{T,\alpha}(\rho(x)) \quad \forall (x,t) \in M_\psi \times (0,T)
$$
is the solution of \eqref{e64} with initial datum $ \mathsf{W}_{T,\alpha} $. By\eqref{W-W} and using Corollary \ref{comppr}, we obtain
\begin{equation}\label{eq: diff-W-1}
\left( 1 - \frac{t}{T} \right)^{-\frac{1}{m-1}} \mathsf{W}_{T,\alpha_0}(\rho(x)) \le u(x,t) \le \left( 1 - \frac{t}{T} \right)^{-\frac{1}{m-1}} \mathsf{W}_{T,\alpha_1}(\rho(x)) \quad \forall (x,t) \in M_\psi \times (0,T)  \, .
\end{equation}
From \eqref{eq: diff-W-1} it is apparent that the maximal existence time for $u$ is precisely $T$ and that \eqref{blow-up-ptws} holds, whereas the validity of \eqref{eq: u0-blowup} is a direct consequence of \eqref{W-W} and \eqref{sol-ellliptic-beh}.
\end{proof}


\end{document}